\numberwithin{equation}{section}
\theoremstyle{plain}
\newtheorem{theorem}{Theorem}[section]
\newtheorem{corollary}[theorem]{Corollary}
\newtheorem{proposition}[theorem]{Proposition}
\newtheorem{lemma}[theorem]{Lemma}
\newtheorem{definition}[theorem]{Definition}
\theoremstyle{definition}
\newtheorem{remark}[theorem]{Remark}
 \definecolor{clou}{rgb}{0.8,0.25,0.5125}
\newcommand{\redu}{\ensuremath{\mathrm{red}}}
\newcommand{\convd}{\,{\buildrel (d) \over \longrightarrow}\,}
\newcommand{\convp}{\,{\buildrel (p) \over \longrightarrow}\,}
\newcommand{\Prb}[1]{\mathbb{P}\left(#1\right)}
\newcommand{\bxi}{\boldsymbol{\xi}}
\newcommand{\btxi}{\boldsymbol{\tilde{\xi}}}
\newcommand{\txi}{\tilde{\xi}}
\newcommand{\bzeta}{\boldsymbol{\zeta}}
\newcommand{\blambda}{\boldsymbol{\lambda}}
\newcommand{\btzeta}{\boldsymbol{\tilde{\zeta}}}
\newcommand{\btmu}{\boldsymbol{\tilde{\mu}}}
\newcommand{\tmu}{\tilde{\mu}}
\newcommand{\bmu}{\boldsymbol{\mu}}
\newcommand{\cT}{\mathcal{T}}
\newcommand{\cU}{\mathcal{U}}
\newcommand{\cW}{\mathcal{W}}
\newcommand{\N}{\mathbb{N}}
\newcommand{\Z}{\mathbb{Z}}
\newcommand{\R}{\mathbb{R}}
\newcommand{\bT}{\mathbb{T}}
\newcommand{\ctT}{\cT^{\flat}}
\newcommand{\be}{\mathbbm{e}}
\newcommand{\kT}{\cT^{\redu}}
\newcommand{\1}{\mathds{1}}
\newcommand{\bzero}{\boldsymbol{0}}
 \newcommand{\ctTl}{\cT^{\flat(\ell)}}
\newcommand{\mT}{\mathfrak{T}}
\newcommand{\Ex}[1]{\mathbb{E}[#1]}
\newcommand{\rvline}{\hspace*{-\arraycolsep}\vline\hspace*{-\arraycolsep}}
\newcommand{\BGW}{Bienaym\'e }
\newcommand{\bgw}{Bienaym\'e }
\begin{document}

\begin{frontmatter}
%%%%%%%%%%%%%%%%%%%%%%%%%%%%%%%%%%%%%%%%%%%%%%
%%                                          %%
%% Enter the title of your article here     %%
%%                                          %%
%%%%%%%%%%%%%%%%%%%%%%%%%%%%%%%%%%%%%%%%%%%%%%
\title{Scaling limits of  multitype Bienaym\'e trees}
%\title{A sample article title with some additional note\thanksref{T1}}
%\runtitle{???}
%\thankstext{T1}{A sample of additional note to the title.}

\begin{aug}
%%%%%%%%%%%%%%%%%%%%%%%%%%%%%%%%%%%%%%%%%%%%%%%
%% Only one address is permitted per author. %%
%% Only division, organization and e-mail is %%
%% included in the address.                  %%
%% Additional information (such as           %%
%% indicating the corresponding author) can  %%
%% be included in the Acknowledgments        %%
%% section if necessary.                     %%
%% ORCID can be inserted by command:         %%
%% \orcid{0000-0000-0000-0000}               %%
%%%%%%%%%%%%%%%%%%%%%%%%%%%%%%%%%%%%%%%%%%%%%%%
\author[A]{\fnms{{Louigi}~\snm{Addario-Berry}\ead[label=e1]{louigi.addario@mcgill.ca}}},
\author[B]{\fnms{Philipp}~\snm{Beltran}\ead[label=e2]{philipp.beltran@tuwien.ac.at}},
\author[B]{\fnms{Benedikt}~\snm{Stufler}\ead[label=e3]{benedikt.stufler@tuwien.ac.at}}
\and
\author[C]{\fnms{Paul}~\snm{Th\'{e}venin}\ead[label=e4]{paul.thevenin@univ-angers.fr}}
%%%%%%%%%%%%%%%%%%%%%%%%%%%%%%%%%%%%%%%%%%%%%%
%% Addresses                                %%
%%%%%%%%%%%%%%%%%%%%%%%%%%%%%%%%%%%%%%%%%%%%%%
\address[A]{Department of Mathematics and Statistics, McGill University\printead[presep={,\ }]{e1}}

\address[B]{Institute of Discrete Mathematics and Geometry, TU Wien\printead[presep={,\ }]{e2,e3}}
\address[C]{Facult\'{e} des Sciences, University of Angers\printead[presep={,\ }]{e4}}

\end{aug}

\begin{abstract}
We consider critical multitype Bienaymé trees that are either irreducible or possess a critical irreducible component with attached subcritical components. These trees are studied under two distinct conditioning frameworks: first, conditioning on the value of a linear combination of the numbers of vertices of given types; and second, conditioning on the precise number of vertices belonging to a selected subset of types. We prove that, under a finite exponential moment condition, the scaling limit as the tree size tends to infinity is given by the Brownian Continuum Random Tree. Additionally, we establish strong nonasymptotic tail bounds for the height of such trees. Our main tools include a flattening operation applied to multitype trees and sharp estimates regarding the structure of monotype trees with a given sequence of degrees.
\end{abstract}

\begin{keyword}[class=MSC]
\kwd[Primary ]{60C05}
\kwd{05C12}
\kwd[; secondary ]{60F17}
\end{keyword}

\begin{keyword}
\kwd{branching processes}
\kwd{multitype trees}
\end{keyword}

\end{frontmatter}
%%%%%%%%%%%%%%%%%%%%%%%%%%%%%%%%%%%%%%%%%%%%%%
%% Please use \tableofcontents for articles %%
%% with 50 pages and more                   %%
%%%%%%%%%%%%%%%%%%%%%%%%%%%%%%%%%%%%%%%%%%%%%%
%\tableofcontents

%%%%%%%%%%%%%%%%%%%%%%%%%%%%%%%%%%%%%%%%%%%%%%
%%%% Main text entry area:

\section{Introduction}

\subsection{Motivation}

\BGW processes (sometimes also called Galton--Watson processes) are random processes used to model the genealogy of a given population, in which individuals reproduce independently of each other with the same offspring distribution. Multitype \bgw processes are a natural generalization of \bgw processes, where each individual in the process is given a type and reproduces independently of the others in a way that only depends on its type. We refer to \cite{Har02} for an introduction to multitype processes. 

\bgw trees are a natural way of representing \bgw processes by trees, while keeping track of the genealogical structure. The root of the tree is the original ancestor of the population, and each nonroot individual is connected by an edge to its parent. The asymptotic properties of large \bgw trees conditioned on their size have been extensively studied, from a local point of view -- what does a finite neighbourhood of the root of the tree look like? -- as well as from a global point of view -- does the size-conditioned tree, viewed as a metric space, converge after rescaling distances?
Unlike in the monotype case, one of the tricky aspects of the multitype setting is that there exist different notions of size of a tree, which may lead to different behaviours.

In the monotype case, consider a probability distribution $\mu$ on $\N_0 := \{0, 1, \ldots \}$ and let $\cT_n$ be the \bgw tree in which each individual has offspring according to $\mu$, conditioned to have $n$ vertices (provided that this event holds with positive probability). If $\mu$ is critical (that is, has mean $1$) and has variance $\sigma^2\in (0,\infty)$, then $\cT_n$, viewed as a metric space where all edges have length $\sigma/(2\sqrt{n})$, converges in distribution to an (almost surely compact) random metric space $\cT_{\be}$ called the Brownian Continuum Random Tree (CRT) \cite{Ald91a,Ald91b,Ald93}. 

In the multitype case, the preceding result admits the following generalization. Write $[K]:= \{ 1, \ldots, K \}$,
let $\zeta^{(1)}, \ldots, \zeta^{(K)}$ be offspring distributions on $\bigcup_{n \geq 0}[K]^{n}$, and let $\cT^{(i)}$ be the associated multitype \bgw tree with $K$ types, where the root has type $i$ and where a vertex of type $j$ has offspring according to $\zeta^{(j)}$. Also, let $\cT_n^{(i)}$ be the tree $\cT^{(i)}$ conditioned to have $n$ vertices of type $i$, again provided that this event has positive probability.  
Under assumptions of criticality and finite variance, the tree $\cT_n^{(i)}$ converges after rescaling to the Brownian CRT; see \cite{HS21, Mie08}. In \cite{multitypeLevy} the authors consider the convergence to a multitype L\'evy tree when keeping track of the types.
In the former, the size of the tree is its number of root-type vertices, and the authors show that the study of $\cT_n^{(i)}$ boils down to investigating the structure of a monotype \bgw tree obtained from $\cT_n^{(i)}$ by keeping only the root-type vertices.

In this paper we consider a more general notion of size. Consider $\blambda := (\lambda_1, \ldots, \lambda_K) \in \N_0^K \setminus \{(0, \ldots, 0)\}$ nonnegative integers, and let $\cT^{(i)}_{\blambda,n}$ be the tree $\cT^{(i)}$ conditioned on
\begin{equation*}
	\sum_{j=1}^K \lambda_j \#_j (\cT^{(i)})  = n,
\end{equation*}
where $\#_j(T)$ denotes the number of vertices of type $j$ in a tree $T$. This type of size-conditioned tree was considered by Stephenson \cite{Ste18}, who described the annealed local limit of $\cT^{(i)}_{\blambda,n}$.

The main goal of this paper is to prove that, under assumptions of criticality, irreducibility and finite exponential moments, the size-conditioned tree $\cT^{(i)}_{\blambda,n}$ converges after rescaling to the Brownian CRT. The main idea is again to consider the reduced tree induced by the root-type vertices. Although this tree is no longer distributed as a size-conditioned \bgw tree, it is a mixture of random trees called {\em trees with prescribed degree sequences}, which allows us to prove its convergence to $\cT_{\be}$ using the main result of~\cite{BM14}. The second step is to bound the distance between a stretched version of the reduced tree and the original tree. This comparison also yields  tail bounds for the height of $\cT^{(i)}_{\blambda,n}$. Such tail bounds were proven for trees with given degree sequences in the recent work~\cite{MR4815972} (see also~\cite{zbMATH07596259,MR3077536,zbMATH06734667}), and we build upon them to establish similar bounds for the reduced tree and ultimately for $\cT^{(i)}_{\blambda,n}$.

We also provide extensions of our main theorems to reducible multitype \BGW trees with additional subcritical components.
  Such trees have close connections to a variety of combinatorial models, see also~\cite{zbMATH00975597,zbMATH05050594,zbMATH06526370}. In particular, limits for the bivariate case with specific conditionings have been studied in \cite{CS23,zbMATH06918043} as tools for the study of random graphs from restricted classes. We emphasize that, in general, reducible trees also admit limits different from the Brownian CRT or other stable trees, as recently demonstrated in~\cite{zbMATH08054723}.
Formal statements of our results appear in Sections~\ref{sec:main_irred} and~\ref{sec:main_red}, below.

\subsection{Background on random trees}

\subsubsection{Plane trees}

We use Neveu's formalism \cite{Nev86}. Let $\N:=\{1,2,\dots\}$ and define $\cU := \bigcup_{k \geq 0} \N^k$, the set of finite sequences of integers (with the convention that $\N^0=\{\varnothing\}$). For any $k \in \N$, an element $u$ of $\N^k$ is written $u=u_1 \cdots u_k$, with $u_1, \ldots, u_k \in \N$. For any $k \in \N$, $u=u_1\cdots u_k \in \N^k$, and $i \in \N$, we denote by $ui$ the element $u_1 \cdots u_ki \in \N^{k+1}$.

A plane tree $t$ is a subset of $\cU$ satisfying the following conditions:
(i) $\varnothing \in t$ (the tree has a root);
(ii) if $u=u_1\cdots u_n \in t$, then, for all $k \leq n$, $u_1\cdots u_k \in t$ (these elements are referred to as {\em ancestors} of $u$);
(iii) for any $u \in t$, there exists a nonnegative integer $k_u(t)$ such that, for every $i \in \N$, $ui \in t$ if and only if $1 \leq i \leq k_u(t)$ ($k_u(t)$ is called the {\em number of children} of $u$, or the {\em outdegree} of $u$, and $ui$ is said to be a child of $u$). The elements of $t$ are referred to as {\em vertices} of $t$, and vertices of $t$ without children are called {\em leaves} of $t$.  

 The {\em height} of a vertex is its length as a sequence of integers, and we denote by $|t|$ the total number of vertices of $t$. We denote by $(v_1(t), \ldots, v_{|t|}(t))$ the sequence of vertices of $t$ in lexicographical order.

We also define a partial order $\preceq$ on the set of vertices of $t$ by saying that $u \preceq v$ if $u$ is an ancestor of $v$. In particular, $\varnothing \preceq u$ for all $u \in t$. The last common ancestor $w$ of two vertices $u$ and $v$ is the largest (in lexicographical order) vertex being both an ancestor of $u$ and an ancestor of $v$. For vertices $x,y \in t$, we write $\llbracket x,y\rrbracket$ (resp.\ $\rrbracket x,y\llbracket$) for the set of vertices of the unique path in $t$ with endpoints $x$ and $y$, including (resp.\ excluding) $x$ and~$y$.

A plane tree $t$ can be represented as a graph-theoretic tree, where each nonroot vertex is connected to its parent by an edge. We will usually consider $t$ as a metric space whose points are the vertices of $t$, with its usual graph distance $d_t$. For $a>0$, we also denote by $at$ the tree $t$ viewed as a metric space, for the renormalized graph distance $a \cdot d_t$. Furthermore, we endow any finite tree $t$ with a mass measure $m_t$ defined as 
\begin{equation*}\label{def:massmeasure}
	m_t := \frac{1}{|t|} \sum_{v \in t} \delta_v.
\end{equation*}

Finally, we denote the set of finite plane trees by $\bT$.

\subsubsection{Multitype plane trees}

Fix $K \in \N := \left\{ 1, 2, \ldots \right\}$ and let $[K]$ be the set of types. A $K$-type plane tree is a pair $T := (t,e_t)$ where $t \in \bT$ is a plane tree and $e_t: t \to [K]$. For $u \in t$, $e_t(u)$ is called the type of the vertex $u$. We also denote by $\#_i(T)$ the number of vertices $u$ of the tree $t$ of type $i$, that is, such that $e_t(u)=i$. In particular, $|T| := \sum_{i=1}^K \#_i(T)$. Finally, we call $t$ the shape of the multitype tree $T$. For all $K \geq 1$, all $i \in [K]$, we let $\bT^{(K)}$ be the set of $K$-type plane trees and by $\bT^{(K,i)}$ the subset of $\bT^{(K)}$ of trees whose root has label $e_t(\varnothing)=i$. 
The degree sequence of a multitype tree $T$ is the unordered multiset  $\{(\ell_1, k_1), \ldots, (\ell_{|T|},k_{|T|})\}$ of elements of $[K] \times \N_0^K$, where, for all $i \in[|T|]$ and all $j \in [K]$, $\ell_i$ is the type of the vertex $v_i(T)$ and $k_i(j)$ is the number of children of type $j$ of the vertex $v_i$.

Given a sequence $\blambda := (\lambda_1, \ldots, \lambda_K) \in \N_0^K$, we set $\#_{\blambda}(T) := \sum_{i=1}^K \lambda_i \#_i(T)$, the linear combination of the number of vertices of each type.

\begin{remark}
	For convenience, in the rest of the paper, we consider here only trees whose root type is $1$, and will thus usually drop the superscript $1$, writing $T$ for $T^{(1)}$. However, the result still holds for any root type, and even for a random root type; see the discussion after Theorem \ref{thm:mainthm}.
\end{remark}

\subsubsection{Multitype \bgw trees}\label{def:mgwzeta}

We next define $K$-type \bgw trees, which are our main object of interest. For $K \in \N$, consider the set $\cW_K := \bigcup_{n \geq 0} [K]^n$ and let $\bzeta := (\zeta^{(i)})_{i \in [K]}$ be a family of probability distributions on $\cW_K$. For $i \in [K]$, we define a probability distribution on $\bT^{(K,i)}$ as follows. Consider a family of independent variables $(X_u^i, u \in \cU, i \in [K])$ with values in $\cW_K$, such that for all $(u,i) \in \cU \times [K]$, $X_u^i$ has law $\zeta^{(i)}$. We recursively construct a random $K$-type tree $\cT^{(i)} := (t, e_t)$ as follows:
$\varnothing \in t, e_t(\varnothing) = i$; if $u \in t$ and $e_t(u) = j$, then, for $k \in \N$, $uk \in t$ if and only if $1\leq k \leq |X_u^j|$ and in this case $e_t(uk)=X_u^j(k)$ (the $k$'th element of the sequence $X_u^j$).

In other words, the root of $\cT^{(i)}$ has type $i$, and vertices of type $j$ in $\cT^{(i)}$ have children according to $\zeta^{(j)}$, independently of each other and of all other vertices. We refer to $\cT^{(i)}$ as a $\bzeta$-\bgw tree with root type $i$.

\subsubsection{Projection}

We define the \textbf{projection} of a multitype probability distribution, which is later needed for flattening and reduction operations on trees.

\begin{definition}\label{def:projection}
	A $K$-type offspring distribution is a family $\bzeta:=(\zeta^{(1)}, \ldots, \zeta^{(K)})$  of probability distributions on $\cW_K$. For any $w \in \cW_K$ and any $j \in [K]$, denote by $\#_j w$ the number of $j$'s appearing in $w$. 
	We define the projection $\bmu:=(\mu^{(1)}, \ldots, \mu^{(K)})$ of $\bzeta$ as the family of probability distributions on $\N_0^K$ satisfying: for all $i \in [K]$, all $k_1, \ldots,k_K \geq 0$:
	\begin{equation*}
		\mu^{(i)}(k_1, \ldots, k_K) = \sum_{\substack{w \in \cW_K \\ \#_1 w=k_1, \ldots, \#_K w=k_K}} \zeta^{(i)}(w).
	\end{equation*}
\end{definition}

\subsubsection{The Brownian CRT}

We recall here the definition of the Brownian Continuum Random Tree (CRT), introduced by David Aldous \cite{Ald91a,Ald91b,Ald93}. Let $(\be_t)_{t \in [0,1]}$ be a standard Brownian excursion. We define a pseudo-distance on $[0,1]$ by saying that, for all $s \leq t$,
\begin{align*}
	d_{\be}(s,t) = \be_s+\be_t - 2 \min_{u \in [s,t]} \be_u,
\end{align*}
and $d_{\be}(t,s)=d_{\be}(s,t)$. We also define an equivalence relation $\sim_{\be}$ on $[0,1]$ as follows: for all $s\leq t \in [0,1]$, $s \sim_{\be} t$ if and only if $\be_s = \be_t = \min_{u \in [s,t]} \be_u$ , that is, $d_{\be}(s,t)=0$. The Brownian CRT is the metric space $\cT_{\be} := [0,1] / \sim_{\be}$, endowed with the projection $d_{\mathcal{T}_{\be}}$ of $d_{\be}$ onto $\cT_{\be}$ (observe that this is a distance on $\cT_{\be}$). See Figure~\ref{fi:crt} for a simulation. We root $\cT_{\be}$ at the equivalence class $\rho_{\be}$ of $0$.

We remark that $\cT_{\be}$ is also naturally endowed with a mass measure $m_{\mathcal{T}_{\be}}$ defined as the projection onto $\cT_{\be}$ of the Lebesgue measure on $[0,1]$.

It turns out that $(\cT_{\be}, \rho_{\be}, d_{\mathcal{T}_{\be}}, m_{\mathcal{T}_{\be}})$ is a universal scaling limit, in the sense that numerous models of trees have been shown to converge to it after rescaling of the distance, see e.g. \cite{BM14, Ald91a, CHK15}.

\subsubsection{Topology}

We endow the set of measured isometry-equivalence classes of marked compact measured metric spaces with the rooted Gromov--Hausdorff--Prokhorov distance $d_{GHP}$; see \cite{Vil09}, Section $27$, for an introduction. We denote by $d_H$ the Hausdorff distance between subsets of a given metric space.% and by $d_P$ the Prokhorov distance. For convenience, for two trees $T_1,T_2$, we will write $d_P(T_1,T_2)$ for $d_P(m_{T_1}, m_{T_2})$.

\subsection{Main result for irreducible processes} \label{sec:main_irred}

A $K$-type offspring distribution is a family $\bzeta:=(\zeta^{(1)}, \ldots, \zeta^{(K)})$  of probability distributions on $\cW_K$. Recall that for any $w \in \cW_K$ and any $j \in [K]$, we denote by $\#_j w$ the number of $j$'s appearing in $w$. We assume that for at least one $1 \le i \le K$ we have
\begin{align}
	\zeta^{(i)}(\emptyset)>0,
\end{align}
where $\emptyset$ denotes the empty word, and that there exists $1 \le j \le K$ such that the probability for a type $j$ vertex to have at least two children is positive.

Finally, we denote by $\cT_n$ a $\bzeta$-\bgw tree $\cT$ with root type $1$ conditioned on the event that $\#_{\blambda} \cT=n$ (provided that this holds with positive probability), and we denote by $\varnothing$ the root of $\cT_n$.
In the paper, we will always consider distributions $\bzeta$ with finite exponential moments, that is:
\begin{equation}
	\exists z>1, \forall i \in [K], \sum_{w \in \cW_K} \zeta^{(i)}(w) z^{|w|} < \infty,
\end{equation}
where $|w|$ denotes the length of the word $w$. In addition to this, define a $K \times K$ matrix $M$ by: for all $i,j \in [K]$,
\begin{align*}
    M_{i,j}=\mathbb{E}[\#_j w^{(i)}],
\end{align*}
where $w^{(i)}$ is the random variable on $\cW_K$ with distribution $\zeta^{(i)}$. We always assume criticality, which means that the spectral radius of $M$ is equal to $1$ and non-degeneracy, that is, there exists $i \in [K]$ such that $\mu^{(i)}\left( \{(x_1,\ldots,x_K), \sum_{j=1}^K x_j \geq 2 \}\right) > 0$. Furthermore, we assume throughout Sections \ref{sec:cvreduced}, \ref{sec:cvflattened} and \ref{sec:cvoriginal} that $M$ is irreducible, meaning that for all $i,j\in[K]$ there exists $p\in \mathbb{N}$ such that $M^p_{i,j}\neq 0$. Our first theorem describes the scaling limit of $\cT_n$, under the above assumptions.

\begin{figure}[t]
	\centering
	\includegraphics{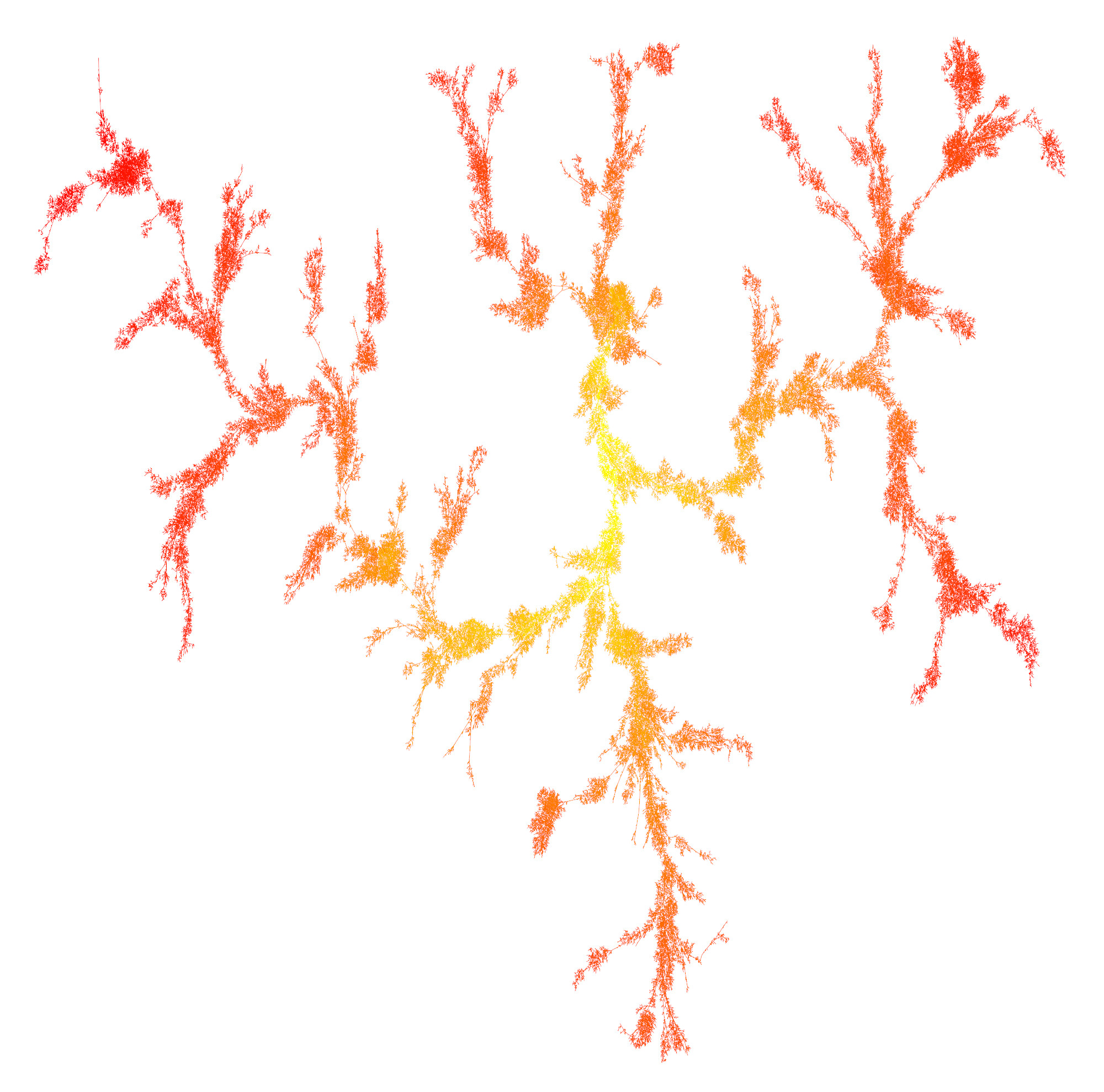}
	%\begin{minipage}{1.0}
		\centering
		\caption{The Brownian tree, approximated by a Bienaym\'e tree conditioned on having  one million vertices. Vertices are coloured on a gradient from yellow to red according to their closeness centrality.}
		\label{fi:crt}
	%\end{minipage}
\end{figure}

\begin{theorem}
	\label{thm:mainthm}
	Suppose that $\bzeta$  satisfies criticality, irreducibility and has finite exponential moments.
	Let $\blambda := (\lambda_1,\ldots, \lambda_K) \in \N_0^K \setminus \{(0, \ldots, 0) \}$. Then, there exists a constant $c_{\mathrm{scal}}$ depending only on $\blambda$ and on the distribution of $\bzeta$ such that, for the rooted Gromov--Hausdorff--Prokhorov topology, we have
	\begin{equation*}
		(\cT_n,  \frac{c_{\mathrm{scal}}}{\sqrt{n}} d_{\cT_n}, m_{\cT_n}) \convd (\cT_{\be},  d_{\mathcal{T}_{\be}}, m_{\mathcal{T}_{\be}}),
	\end{equation*}
	as $n \to \infty$ along integers satisfying $\Prb{\#_{\blambda}(\cT)=n}>0$. 
\end{theorem}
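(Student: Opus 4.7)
The plan is to follow the strategy indicated in the introduction. Denote by $\cT_n^{\red}$ the \emph{reduced tree} obtained from $\cT_n$ by keeping only the type-$1$ vertices, where a type-$1$ vertex $u$ becomes a child in $\cT_n^{\red}$ of its nearest type-$1$ strict ancestor in $\cT_n$. The first step is to show that, conditionally on the multiset of subtrees of non-type-$1$ vertices lying between consecutive type-$1$ generations (together with the type-$1$ children each such subtree carries), $\cT_n^{\red}$ is a uniformly random plane tree with a prescribed degree sequence. This exchangeability comes from the independence inherent in the multitype \bgw construction, and exhibits $\cT_n^{\red}$ as a mixture of random plane trees with fixed degree sequences.

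Next, using criticality, irreducibility and finite exponential moments, Perron--Frobenius theory produces a unique positive left eigenvector $a=(a_1,\ldots,a_K)$ of $M$ with eigenvalue $1$. I would use it to show that $\#_j(\cT_n)/n \to a_j/\sum_k a_k \lambda_k$ in probability, with concentration strong enough to describe the empirical out-degree distribution of $\cT_n^{\red}$. Concretely, one needs $|\cT_n^{\red}|/n \to \theta > 0$, convergence of the proportion of reduced-tree vertices with $k$ children to an explicit critical law $\pi$ on $\N_0$ with finite variance, and $o(\sqrt n)$ control on the maximum reduced out-degree. The Broutin--Marckert theorem~\cite{BM14} then applies conditionally on the degree sequence of $\cT_n^{\red}$, yielding
\begin{equation*}
	\left( \cT_n^{\red},\, \frac{c_{\red}}{\sqrt{n}}\, d_{\cT_n^{\red}},\, m_{\cT_n^{\red}} \right) \convd \left(\cT_{\be},\, d_{\mathcal{T}_{\be}},\, m_{\mathcal{T}_{\be}}\right),
\end{equation*}
for an explicit constant $c_{\red}>0$ depending only on $\blambda$ and $\bzeta$.

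The remaining, and most delicate, step is to compare $\cT_n$ with a suitably stretched copy of $\cT_n^{\red}$. Between two consecutive type-$1$ vertices in $\cT_n$ sits a random path of non-type-$1$ vertices, and additional non-type-$1$ subtrees hang off both the type-$1$ vertices and the internal vertices of such paths. To approximate $\cT_n$ by $\cT_n^{\red}$ in the Gromov--Hausdorff--Prokhorov sense, I would stretch each edge of $\cT_n^{\red}$ by a deterministic factor $\alpha$ equal to the asymptotic mean graph distance in $\cT_n$ between a type-$1$ vertex and its nearest type-$1$ descendant, and prove that the maximal height of any non-type-$1$ subtree appearing along $\cT_n$ is $o(\sqrt n)$ with high probability. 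This uniform height control is the main obstacle: it requires quantitative tail bounds of the kind recently obtained in~\cite{MR4815972} for trees with prescribed degree sequences, together with exponential-moment estimates ruling out rare large pendant blobs. The comparison of mass measures is then immediate from $|\cT_n^{\red}|/|\cT_n| \to \theta$, together with the observation that non-type-$1$ vertices sit at $o(\sqrt n)$ distance from the nearest type-$1$ one.

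Combining these three steps yields the stated Gromov--Hausdorff--Prokhorov convergence with constant $c_{\mathrm{scal}} = c_{\red}/\alpha$. The principal technical hurdle is the uniform tail bound on heights of the attached non-type-$1$ subtrees; the other two steps are, by comparison, extensions of well-established techniques for monotype size-conditioned trees, routed through the flattening/reduction operation.
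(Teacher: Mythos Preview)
Your three-step outline matches the paper's strategy closely: represent $\cT_n^{\red}$ as a mixture of trees with prescribed degree sequences, verify the hypotheses of Broutin--Marckert via concentration of the degree profile, and then compare $\cT_n$ to a stretched copy of $\cT_n^{\red}$. The paper routes the comparison through an intermediate \emph{flattened} tree $\ctT_n$ (same vertex set as $\cT_n$, with every non-type-$1$ vertex made a leaf attached to its nearest type-$1$ ancestor), but since $\ctT_n$ and $\kT_n$ are at Hausdorff distance $1$, this is essentially the same as your direct comparison.

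The gap is in your identification of the main obstacle. You write that the crux is bounding the maximal height of the non-type-$1$ ``blobs'' and that this requires the tail bounds of~\cite{MR4815972}. In fact the blob heights are easy: by finite exponential moments each blob has size (hence height) $O(\log n)$ with high probability, and~\cite{MR4815972} is used only for the separate height-tail theorem, not for the GHP limit. The real difficulty is that stretching each reduced edge by a \emph{deterministic} factor $\alpha$ introduces, at a vertex $v$ of reduced height $\ell$, an error $\sum_{i=1}^\ell (Z_i-\alpha)$ where the $Z_i$ are the actual blob path lengths along the ancestry of $v$. Bounding each $Z_i=O(\log n)$ only gives $O(\sqrt{n}\log n)$ for this sum, which is not $o(\sqrt{n})$. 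What is needed is a uniform law of large numbers for these sums along \emph{all} root-to-vertex paths simultaneously, under the conditioning $\#_{\blambda}\cT=n$. The paper achieves this via a size-biased spine decomposition: it constructs a size-biased tree $\ctTl$ with a marked type-$1$ vertex at height $\ell$, proves that $(\ctTl,\alpha_{\ctTl})$ and the unmarked $(\ctT,\alpha_{\ctT})$ have the same law, and then observes that along the spine the $Z_i$ are genuinely i.i.d., so a medium-deviation inequality plus a union bound over $\ell$ (absorbing the polynomial factor $\Prb{\#_{\blambda}\cT=n}^{-1}=O(n^{3/2})$) gives $|h_{\cT_n}(\Phi(v))-c_\varsigma h_{\ctT_n}(v)|=o(\sqrt{n})$ uniformly. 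Your proposal does not supply this step, and without it the stretching comparison cannot be closed.
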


We note that $\Prb{\#_{\blambda}(\cT)=n}>0$ for infinitely many values of $n$; see, e.g. \cite{Ste18}, Proposition 2.2. The constant $c_{\mathrm{scal}}$ appearing in Theorem \ref{thm:mainthm} is explicit and given by

\begin{align*}
    c_{\mathrm{scal}}:=\frac{\sigma}{2} \sqrt{\sum_{i=1}^K \lambda_i a_i}.
\end{align*}

Here, $\mathbf{a}:=(a_1,\ldots,a_K)$ is the unique $1$-left eigenvector with positive coordinates of the mean matrix $M$ such that $\sum_{i=1}^K a_i=1$ (existence and uniqueness of $\mathbf{a}$ are guaranteed by the Perron-Frobenius theorem). The factor $\sigma>0$ is a variance term, satisfying
\begin{align}
\label{eq:sigma}
    \sigma^2=\sum_{i,j,k=1}^K a_i b_j b_k Q^{(i)}_{j,k},
\end{align}
with $\mathbf{b}:=(b_1,\ldots,b_K)$ the unique $1$-right eigenvector of $M$ with positive coordinates such that $\sum_{i=1}^K a_i b_i=1$, and
\begin{align*}
    Q^{(i)}_{j,j} = \sum_{\mathbf{z} \in \mathbb{N}_0^K} \mu^{(i)}(\mathbf{z}) z_j(z_j-1), \text{ and } Q^{(i)}_{j,k}=\sum_{\mathbf{z} \in \mathbb{N}_0^K}\mu^{(i)}(\mathbf{z}) z_j z_k \text{ for }j \neq k,
\end{align*}
where $\mu$ is the projection of $\bzeta$.
Again, existence and uniqueness of $\mathbf{b}$ are guaranteed by the Perron-Frobenius theorem. 

We remark that the scaling constant $c_{\mathrm{scal}}>0$ does not depend on the root type of $\cT$ and therefore Theorem~\ref{thm:mainthm} still holds if the root type of $\cT$ is chosen at random in an independent first step. 

Now let $\mathrm{H}(\cdot)$ denote the height of a tree, that is, the maximal graph distance from any vertex to the root vertex (equivalently, the maximal height of a vertex in the tree).
Our second theorem provides strong tail bounds for the rescaled height of $\cT_n$.

\begin{theorem}
	\label{te:bound}
	Suppose that $\bzeta$  satisfies criticality, irreducibility and has finite exponential moments. Let $\blambda := (\lambda_1,\ldots, \lambda_K) \in \N_0^K \setminus \{(0, \ldots, 0 )\}$. 
	Then there are constants $C,c>0$ depending only on $\blambda$ and on the distribution of $\bzeta$  such that
	\begin{align}
		\label{eq:tebound}
		\Prb{\mathrm{H}(\cT_n) > x } \le C \exp(-cx^2 /n ) + C\exp(-cx)
	\end{align}
	 for all $x>0$ and all integers $n$ with $\Prb{\#_{\blambda}(\cT)=n}>0$.
\end{theorem}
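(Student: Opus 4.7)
The plan is to reduce Theorem~\ref{te:bound} to the analogous tail bound for monotype trees with a prescribed degree sequence, established in \cite{MR4815972}, by comparing $\cT_n$ with a suitable flattened tree $\cT_n^{\flat}$ whose vertices are the type-$1$ vertices of $\cT_n$ (say, assuming $\lambda_1 > 0$; otherwise, pick any type with $\lambda_i > 0$) and whose edges encode the ancestor/descendant relations between them. The first step is to observe that, conditionally on its degree sequence, $\cT_n^{\flat}$ is distributed as a uniform plane tree with that degree sequence. Together with the tail control on the degree sequence provided by the finite exponential moment assumption on $\bzeta$ (the same control used for Theorem~\ref{thm:mainthm}), the main result of \cite{MR4815972} then yields
\begin{equation*}
\Prb{\mathrm{H}(\cT_n^{\flat}) > y} \le C_1 \exp(-c_1 y^2/n) + C_1 \exp(-c_1 y)
\end{equation*}
for all $y > 0$, uniformly in $n$, where we have used that $|\cT_n^{\flat}| \le n$.

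The second step is to bound $\mathrm{H}(\cT_n)$ in terms of $\mathrm{H}(\cT_n^{\flat})$ and the lengths of the \emph{fibers} attached to $\cT_n^{\flat}$, that is, the paths in $\cT_n$ joining a type-$1$ vertex to its nearest type-$1$ ancestor, and the subcritical subtrees dangling off such paths. Any root-to-leaf path in $\cT_n$ can be decomposed as an alternation of fibers along the projected path in $\cT_n^{\flat}$, plus one final excursion into a subcritical monotype forest hanging off the deepest type-$1$ ancestor. By criticality and irreducibility, deleting all type-$1$ vertices from $\cT_n$ leaves a subcritical forest, and the finite exponential moment assumption on $\bzeta$ ensures that the heights of these component trees have uniform exponential tails. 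This yields an inequality of the schematic form $\mathrm{H}(\cT_n) \le M_n \cdot \mathrm{H}(\cT_n^{\flat}) + B_n$, where $M_n$ is the maximum fiber length along any root-to-leaf path of $\cT_n^{\flat}$ and $B_n$ bounds the height of the final excursion.

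The main obstacle is to establish exponential tail bounds for $M_n$ and $B_n$ that are strong enough to survive a union bound over the $O(n)$ fibers, despite the fact that the fibers are \emph{not} independent: they share the global constraint $\#_{\blambda}(\cT) = n$. The natural approach is a Markovian decomposition: conditionally on the shape of $\cT_n^{\flat}$ and the types visited, each fiber is distributed as a subcritical process conditioned on an appropriate $\blambda$-weighted displacement, and a standard absolute continuity estimate transfers the unconditional exponential tail to the conditioned setting at the cost of a polynomial factor. A union bound over the at most $n$ fibers then gives $\Prb{M_n + B_n > t} \le C_2 n \exp(-c_2 t)$, which after absorbing the polynomial factor into a slightly smaller exponent yields $\Prb{M_n + B_n > x/2} \le C_3 \exp(-c_3 x)$ for $x \ge \log^2 n$. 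Combining with the bound from the first step via
\begin{equation*}
\Prb{\mathrm{H}(\cT_n) > x} \le \Prb{\mathrm{H}(\cT_n^{\flat}) > \sqrt{x}} + \Prb{M_n + B_n > x/\sqrt{x}} + \Prb{B_n > x/2}
\end{equation*}
and optimising the split between the Gaussian and exponential regimes (which roughly corresponds to $x \asymp \sqrt{n}$) delivers the stated tail bound \eqref{eq:tebound}, with the small-$x$ regime handled trivially by adjusting the constants.
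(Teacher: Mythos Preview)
Your first step---obtaining a tail bound for the reduced tree via \cite{MR4815972} after conditioning on the degree sequence---is essentially what the paper does in Proposition~\ref{pro:goup}, and is fine.

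The gap is in your second step. The multiplicative bound $\mathrm{H}(\cT_n)\le M_n\cdot \mathrm{H}(\cT_n^{\flat})+B_n$, with $M_n$ the \emph{maximum} fiber length, is too crude to recover the Gaussian tail $\exp(-cx^2/n)$. The point is that $M_n$ is typically of order $\log n$, not $O(1)$: a union bound over $\Theta(n)$ fibers with individually exponential tails gives $\Prb{M_n\le K}\to 1$ only for $K\gg\log n$. So the product $M_n\cdot \mathrm{H}(\cT_n^{\flat})$ overshoots $\mathrm{H}(\cT_n)$ by a logarithmic factor, and no choice of split can repair this. Your proposed split at $\sqrt{x}$ makes this concrete: $\Prb{\mathrm{H}(\cT_n^{\flat})>\sqrt{x}}\le C\exp(-cx/n)+C\exp(-c\sqrt{x})$, which is far weaker than the target (take e.g.\ $x=n^{3/4}$).

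What the paper does instead is to control, for a fixed vertex at height $\ell\le\alpha x$ in the reduced tree, the \emph{sum} of the $\ell$ fiber lengths along its ancestral line, not their maximum. Choosing the constant $\alpha$ so that $\alpha\,\mathbb{E}[\text{fiber length}]<1/2$, the event that this sum exceeds $x$ is a genuine large deviation for an i.i.d.\ sum of $\ell\le\alpha x$ terms, with probability $\exp(-\Theta(x))$. The technical device that makes the fiber lengths i.i.d.\ (despite the global conditioning $\#_{\blambda}\cT=n$) is the size-biased tree of Section~\ref{sec:cvoriginal}: one passes from $\cT_n$ to the unconditioned size-biased tree $\ctTl$ at the cost of the polynomial factor $\Prb{\#_{\blambda}\cT=n}^{-1}=O(n^{3/2})$, and along the spine of $\ctTl$ the blobs are independent with the size-biased offspring law. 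This is the missing idea; your ``Markovian decomposition'' gestures at it but does not supply the mechanism, and your use of the maximum rather than the sum would not yield the bound even if it did.
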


If, by construction, $\mathrm{H}(\cT_n) = O(n)$, then the bound simplifies to 
\begin{align}
			\Prb{\mathrm{H}(\cT_n) > x } \le C' \exp(-c'x^2 /n )
\end{align}
for some constants $C',c'>0$ that do not depend on $n$. This is the case, for example, when $\lambda_1, \ldots, \lambda_K \ne 0$, because then $\mathrm{H}(\cT_n) \le |\cT_n| \le \#_{\bm \lambda} \cT_n = n$. However, for general $\blambda$, the term $Ce^{-cx}$ in the bound is necessary.

The scaling limit of $\cT_n$ in Theorem~\ref{thm:mainthm} yields convergence in distribution of rescaled tree parameters such as the height, diameter, total path length,  Wiener index and many others.
The height bound in Theorem~\ref{te:bound} ensures convergence of all moments of these parameters. 

For example, the Wiener index (studied in the monotype case in~\cite{zbMATH01958290}) of $\cT_n$ is defined by
\[
	W(\cT_n) = \frac{1}{2} \sum_{u,v \in \cT_n} d_{\cT_n}(u,v).
\]
With $u_n$ and $v_n$ denoting independent samples of $m_{\cT_n}$, this may be expressed as a conditional expectation
\[
	W(\cT_n)  = \frac{|\cT_n|^2 }{2} \Ex{d_{\cT_n}(u_n, v_n) \mid \cT_n}.
\]
By Proposition~\ref{prop:concentration} (v) we know that $|\cT_n| /n \convp s_{\bm{\lambda}}$ for $s_{\bm{\lambda}} = c_1^{-1}\sum_{i=1}^K \Ex{\tilde{\xi}_i}$. By Theorem~\ref{te:bound} it follows that with $u,v$ denoting independent samples of $m_{\mathcal{T}_{\be}}$ 
\begin{align*}
	\frac{2 c_{\mathrm{scal}}}{s_{\bm{\lambda}}^2n^{5/2}} W(\cT_n) \convd \Ex{d_{\cT_{\be}}(u,v) \mid \cT_{\be}} 
\end{align*}
with
\begin{align*}
	\Ex{d_{\cT_{\be}}(u,v) \mid \cT_{\be}} 
	{\buildrel d \over =} \int_{0}^1 \int_{0}^1 (\be(x) + \be(y) - 2 \min_{\min(x,y) \le t  \le \max(x,y)}\be(t) ) \,\mathrm{d}x \,\mathrm{d}y.
\end{align*}
From Theorem~\ref{te:bound} it follows that $n^{-5/2} W(\cT_n)$ is arbitrarily highly uniformly integrable, yielding for all $p \ge 1$ that the $p$th moments of the rescaled Wiener index converge as well.

\subsection{Main result for reducible processes}\label{sec:main_red}

We turn to the case of reducible \BGW trees. One motivation for this is that in some scenarios we want to condition $\cT$ on having a given number of leaves. We could model this by modifying the offspring distribution so that there is an additional unique infertile type and then condition the whole random tree on having $n$ vertices of this type. However, the resulting offspring distribution is not irreducible. Therefore, we provide a second theorem that applies to branching processes of this kind.

Our setting is as follows. We consider a branching process with $K + K'$ types, where $K, K' \ge 1$ are integers, so that the mean matrix of the offspring distribution $\bm{\zeta}$ has the form
\begin{align}
	\label{eq:meanmatrixred}
	 A = 
\begin{pmatrix}
	M & \rvline & S \\
	\hline
	0 & \rvline & M'
\end{pmatrix}
\end{align}
with $M$ a $K \times K$ matrix, $M'$ a $K' \times K'$ matrix, and $S$ a $K \times K'$ matrix. We assume that $M$ is irreducible, meaning that for all $i,j \in [K]$ there exists $p \in \mathbb{N}$ such that the coefficient in the $i$th row and $j$th column of $M^p$ is nonzero. We assume that $M$ is critical, meaning its spectral radius is equal to $1$. We  assume that $M'$ is subcritical, that is, its spectral radius is strictly smaller than $1$. In order to exclude ``unused'' types, we assume without loss of generality that for each type $K+1 \le j \le K+K'$ there exists $p \in \mathbb{N}$ and $i \in [K]$ such that the coefficient in the $i$th row and $j$'th column of $A^p$ is positive. 

Our next condition is that $\bm{\zeta}$ has finite exponential moments. In order to avoid degenerate cases, we assume that there exists at least one type $i \in [K]$ that with positive probability has no offspring whose type belongs to $[K]$. We also assume that for at least one $i \in [K]$, the probability for a type-$i$ vertex to have more than one child is positive. 

Analogously to before, given nonnegative integers $\lambda_1, \ldots, \lambda_{K+K'}$ that are not all equal to zero we let $\cT_n$ denote a $\bm{\zeta}$-\BGW tree $\cT$ with root type $1$ conditioned on the event $\#_{\bm \lambda} \cT= n$.

\begin{theorem}
	\label{thm:reducible}
	Under the stated conditions, there exists a constant $c_{\mathrm{scal}}'>0$ such that
	\begin{equation*}
		(\cT_n, \frac{c_{\mathrm{scal}}'}{\sqrt{n}} d_{\cT_n}, m_{\cT_n}) \overset{(d)}{\longrightarrow} (\cT_{\be} , d_{\mathcal{T}_{\be}}, m_{\mathcal{T}_{\be}}),
	\end{equation*}
	as $n \to \infty$ along all positive integers $n$ satisfying $\Prb{\#_{\blambda}(\cT)=n}>0$. Moreover, there exist constants $C,c>0$ such that
	\[
		\Prb{\mathrm{H}(\cT_n)>x} \le C\exp(-cx^2 /n) + C\exp(-cx)
	\]
	uniformly for all $x>0$ and all integers $n$ with $\Prb{\#_{\blambda}(\cT)=n}>0$.
\end{theorem}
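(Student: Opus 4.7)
The plan is to reduce Theorem~\ref{thm:reducible} to Theorem~\ref{thm:mainthm} and Theorem~\ref{te:bound} applied to the subtree of $\cT$ spanned by the critical types. The block-triangular shape of $A$ makes this reduction natural: since the bottom-left block of $A$ vanishes, every subcritical-type vertex has only subcritical-type descendants, so the set $V^{\mathrm{core}}$ of vertices of types in $[K]$ is closed under taking ancestors and forms an isometrically embedded subtree $\cT^{\mathrm{core}} \subseteq \cT$. By the branching property, $\cT^{\mathrm{core}}$ is a $\bzeta^{\mathrm{core}}$-\BGW tree, where $\zeta^{(i),\mathrm{core}}$ is the image of $\zeta^{(i)}$ under the map $\cW_{K+K'}\to\cW_K$ extracting the subword of critical-type letters. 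Its mean matrix is the irreducible critical matrix $M$, and finite exponential moments pass from $\bzeta$ to $\bzeta^{\mathrm{core}}$.

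Given $\cT^{\mathrm{core}}$, the remainder of $\cT$ is a forest of independent subcritical multitype \BGW trees hanging off the core vertices, whose root types (and number) at a core vertex $v$ of type $i$ are determined by the $[K+1,K+K']$-entries of the $\zeta^{(i)}$-sample used at $v$. Write $\cT^{\mathrm{sub}}_v$ for the union of subtrees attached at $v$ and set
\[
	X^{(i)}_v \;:=\; \lambda_i + \sum_{j=K+1}^{K+K'} \lambda_j \#_j(\cT^{\mathrm{sub}}_v),
	\qquad
	\#_{\blambda}(\cT) \;=\; \sum_{v \in \cT^{\mathrm{core}}} X^{(e_\cT(v))}_v.
\]
Subcriticality of $M'$ together with finite exponential moments of $\bzeta$ implies that each $X^{(i)}$ and each $\mathrm{H}(\cT^{\mathrm{sub}}_v)$ has finite exponential moments, uniformly in the type $i$. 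Put $\tilde\lambda_i := \Ex{X^{(i)}}$; the assumption that $\blambda \neq 0$ and that every subcritical type is reachable from some critical type guarantees $\tilde{\blambda}\neq 0$.

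The heart of the proof is a conditional local-limit-theorem comparison showing that the law of $\cT^{\mathrm{core}}$ under the conditioning $\{\#_{\blambda}\cT = n\}$ is asymptotically the same as its law under a small-window conditioning of $\#_{\tilde{\blambda}}\cT^{\mathrm{core}}$ around $n$. Indeed, conditionally on $\cT^{\mathrm{core}}$, the variable $\#_{\blambda}\cT - \#_{\tilde{\blambda}}\cT^{\mathrm{core}} = \sum_{v}(X_v^{(e_\cT(v))}-\tilde\lambda_{e_\cT(v)})$ is a sum of independent, centered, exponentially-bounded random variables, so a Gnedenko-type LLT gives
\[
	\Prb{\#_{\blambda}\cT = n \,\big|\, \cT^{\mathrm{core}}} \sim \frac{1}{\sqrt{2\pi\, \Sigma^2(\cT^{\mathrm{core}})}}\exp\!\left(-\frac{\bigl(n-\#_{\tilde{\blambda}}\cT^{\mathrm{core}}\bigr)^2}{2\,\Sigma^2(\cT^{\mathrm{core}})}\right)
\]
on the event that $|\cT^{\mathrm{core}}|$ is of order $n$, with a suitable variance proxy $\Sigma^2$ that is linear in $|\cT^{\mathrm{core}}|$. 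Combining this with Theorem~\ref{thm:mainthm} for $\cT^{\mathrm{core}}$ with parameter $\tilde{\blambda}$, and summing over the admissible values of $\#_{\tilde{\blambda}}\cT^{\mathrm{core}}$, yields the desired scaling limit for $\cT^{\mathrm{core}}$ with an explicit constant $c_{\mathrm{scal}}'$ expressed through the formula for Theorem~\ref{thm:mainthm} and the first two moments of $X^{(\cdot)}$. I expect this LLT-comparison step to be the main obstacle, as it requires uniformity over the random core and a coupled analysis of the joint fluctuations of the weights and of $\cT^{\mathrm{core}}$.

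It remains to transfer the convergence from the core back to $\cT_n$ itself, and to obtain the tail bound for $\mathrm{H}(\cT_n)$. Because $\cT^{\mathrm{core}}$ is isometrically embedded in $\cT$, we have $d_H(\cT^{\mathrm{core}}_n, \cT_n) \le \max_{v\in\cT^{\mathrm{core}}_n}\mathrm{H}(\cT^{\mathrm{sub}}_v)$ and this maximum, being the maximum of $O(n)$ i.i.d.-like variables with exponential tails, is $O(\log n)=o(\sqrt{n})$ in probability; coupled with a coupling estimate between $m_{\cT_n}$ and $m_{\cT^{\mathrm{core}}_n}$ (using that each $v$ carries a bounded random mass $1+|\cT^{\mathrm{sub}}_v|$ whose average concentrates), this gives GHP convergence to $\cT_{\be}$. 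Finally, the height bound is obtained by writing $\mathrm{H}(\cT_n)\le \mathrm{H}(\cT^{\mathrm{core}}_n) + \max_v \mathrm{H}(\cT^{\mathrm{sub}}_v)$, applying Theorem~\ref{te:bound} to the core to control the first term by $C\exp(-cx^2/n)$, and bounding the second term by a union bound against the exponential tail of $\mathrm{H}(\cT^{\mathrm{sub}})$, which produces the additive $C\exp(-cx)$ contribution.
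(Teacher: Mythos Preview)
Your decomposition into a critical core $\cT^{\mathrm{core}}$ and subcritical decorations is natural, and the final transfer step (Hausdorff distance, mass comparison, height bound) would work essentially as you outline. The gap is in the middle step: you propose to invoke Theorem~\ref{thm:mainthm} for the core with weight vector $\tilde{\blambda}=(\Ex{X^{(i)}})_{i\in[K]}$, but Theorem~\ref{thm:mainthm} is stated and proved only for $\blambda\in\N_0^K$, whereas $\tilde\lambda_i=\lambda_i+\sum_{j>K}\lambda_j\,\Ex{\#_j(\cT^{\mathrm{sub}}_v)}$ is generically irrational. The event $\{\#_{\tilde{\blambda}}\cT^{\mathrm{core}}=m\}$ then has probability zero, so there is nothing to condition on, and the cycle-lemma/local-limit arguments of Section~\ref{sec:cvreduced} do not apply as written. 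One could try to repair this by conditioning instead on the full type profile $(\#_i\cT^{\mathrm{core}})_{i\in[K]}$, but that is precisely Theorem~\ref{thm:bytype}, which in the paper's logical order comes \emph{after} Theorem~\ref{thm:reducible}; alternatively one could extend Theorem~\ref{thm:mainthm} to real weights, but that is additional work, not a reduction.

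The paper avoids the detour entirely. Its observation is that the machinery of Sections~\ref{sec:cvreduced}--\ref{sec:cvoriginal} never actually used irreducibility of the full mean matrix; it only used Lemma~\ref{lem:meanexpomoments} (that $\Ex{\txi_1}=1$, exponential moments, $\Prb{\txi_1=0}>0$) and the polynomial decay~\eqref{poldecay}. Lemmas~\ref{lem:aa} and~\ref{lem:bb} re-establish exactly these two ingredients in the reducible $(K+K')$-type setting---the first via a spectral argument showing that the mean matrix with the type-$1$ row zeroed is still subcritical, the second via singularity analysis of the implicit equation for $\Ex{x^{\#_{\blambda}\cT}}$. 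Once these hold, the reduced/flattened/blow-up arguments run verbatim on the full $(K+K')$-type tree with type $1$ as the distinguished type; there is no need to peel off the subcritical part first, and no intermediate LLT comparison is required. The remaining work in Section~\ref{sec:reducible} is only the computation of the scaling constant in terms of the left eigenvector of the block-triangular mean matrix.
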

The scaling constant is given by 
\begin{align*}
	c_{\mathrm{scal}}':=\frac{\sigma}{2} \sqrt{\sum_{i=1}^{K+K'} \lambda_i a_i},
\end{align*} 
with $(a_1, \ldots, a_{K+K'})$ denoting the  left $1$-eigenvector of the mean matrix of $\bm{\zeta}$ that has nonnegative coordinates  and is normalized so that $\sum_{i=1}^K a_i = 1$. We emphasize that here we normalize so that the sum of only the first $K$ coordinates equals $1$.  This differs from the irreducible case, in which we normalized so that the sum of all coordinates equalled~$1$. %\ben{I added this sentence to highlight the difference to the eigenvector in the scaling constant of the irreducible case.} 
Existence and uniqueness of this vector are verified in the proof of Theorem~\ref{thm:reducible}.

As before, the scaling factor does not depend on the root type, so Theorem~\ref{thm:reducible} still holds for any fixed or random root type from $[K]$ (but does not cover the case where the root has type larger than $K$).

We emphasize that we assume $M'$ to be subcritical and that this assumption is necessary. For example,  consider a $2$-type \BGW tree with type-$1$ vertices having a $\mathrm{Poisson}(1)$ number of type-$1$ children, and an independent $\mathrm{Poisson}(1)$ number of type-$2$ children. If type-$2$ vertices have a $\mathrm{Poisson}(1/2)$ number of type-$2$ children and nothing else, then Theorem~\ref{thm:reducible} applies and the scaling limit is the Brownian tree. If type-$2$ vertices have, instead, a $\mathrm{Poisson}(1)$ number of type-$2$ children and nothing else, then the scaling limit differs from the Brownian tree; see~\cite{zbMATH08054723}.

\subsection{Main result when conditioning by types}

Suppose that we are in the setting of the irreducible or the reducible case described above. One way to unify this is to assume the setting in the reducible case but allow $K'\ge 0$ instead of $K'>0$.

So far, we conditioned the \BGW tree $\cT$ on the event $\#_{\bm \lambda} \cT = n$. Instead we may fix a nonempty set $I$ of types, a sequence $x(n) =(x_{i}(n))_{i \in I} \in \mathbb{N}^I$, $n \ge 1$ and let $\mT_n$ denote the result of conditioning $\cT$ on 
\[
	(\#_i \cT)_{i \in I} = x(n).
\] 
Throughout, we only consider sequences $x(n)$ for which this event has positive probability.  
We will mainly be interested in the case where 
\begin{align}
	\label{eq:cccond}
	x_i(n) = a_i n + O(\sqrt{n})
\end{align}
for each $i \in I$ as $n \to \infty$, with, as before, $(a_1, \ldots, a_{K+K'})$ the unique left eigenvector with eigenvalue $1$ of the mean matrix in~\eqref{eq:meanmatrixred}  with nonnegative coefficients normalized so that $\sum_{i=1}^K a_i =1$, and we impose this as a condition in the next result.

\begin{theorem}
	\label{thm:bytype}
	Under the stated conditions, there exists a constant $\mathfrak{c}_{\mathrm{scal}} = \sigma /2$ such that
	\begin{equation*}
		(\mT_n,  \frac{\mathfrak{c}_{\mathrm{scal}}}{\sqrt{n}} d_{\mT_n}, m_{\mT_n}) \overset{(d)}{\longrightarrow} (\cT_{\be},  d_{\mathcal{T}_{\be}}, m_{\mathcal{T}_{\be}}),
       \end{equation*}
	as $n \to \infty$ along all positive integers $n$ satisfying $\Prb{(\#_i \cT)_{i \in I} = x(n)}>0$. Moreover, there exist constants $C,c>0$ such that
	\[
	\Prb{\mathrm{H}(\mT_n)>x} \le C\exp(-cx^2 /n) + C\exp(-cx)
	\]
	uniformly for all $x>0$ and all positive integers $n$ with $\Prb{(\#_i \cT)_{i \in I} = x(n)}>0$.
\end{theorem}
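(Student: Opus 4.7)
The plan is to adapt the proofs of Theorems~\ref{thm:mainthm} and~\ref{thm:reducible} to the finer conditioning by individual type counts. Let $\blambda \in \N_0^{K+K'}$ be the indicator vector of $I$ (i.e.\ $\lambda_i = 1$ for $i \in I$ and $\lambda_i = 0$ otherwise). Under the event $\{(\#_i\cT)_{i\in I}=x(n)\}$ one has $\#_{\blambda}\cT = N(n) := \sum_{i\in I}x_i(n) = (\sum_{i\in I}a_i) n + O(\sqrt n)$, so $\mT_n$ has the same law as $\cT_{N(n)}$ from Theorem~\ref{thm:mainthm}/\ref{thm:reducible} further conditioned on the event $E_n := \{(\#_i\cT)_{i \in I} = x(n)\}$. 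The asserted scaling factor is consistent with those earlier theorems: the Theorem~\ref{thm:mainthm}/\ref{thm:reducible} scaling constant for this choice of $\blambda$ is $c_{\mathrm{scal}} = (\sigma/2)\sqrt{\sum_{i\in I}a_i}$, and
\[
c_{\mathrm{scal}}/\sqrt{N(n)} = (\sigma/2)/\sqrt{n}\,(1 + o(1)) = \mathfrak{c}_{\mathrm{scal}}/\sqrt n\,(1 + o(1)),
\]
so any scaling limit inherited from $\cT_{N(n)}$ automatically produces the constant $\mathfrak{c}_{\mathrm{scal}}$ claimed.

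The heart of the proof mirrors Sections~\ref{sec:cvreduced}--\ref{sec:cvoriginal}: reduce to the (monotype) reduced tree $\mT_n^{\red}$ obtained by keeping only the root-type vertices. Conditional on its degree sequence, $\mT_n^{\red}$ is uniform among plane trees with that degree sequence. The key point is that, under $\mT_n$, the random degree sequence of $\mT_n^{\red}$ still falls into the universality class where the main result of~\cite{BM14} (scaling convergence to $\cT_{\be}$) and the height tail estimates of~\cite{MR4815972} apply. For Theorems~\ref{thm:mainthm}/\ref{thm:reducible}, verification of this class reduces to a local limit theorem for the multitype composition vector combined with concentration of offspring counts; for Theorem~\ref{thm:bytype}, fixing the coordinates in $I$ at their typical values $(a_i n + O(\sqrt n))_{i \in I}$ simply freezes $|I|-1$ scalar observables and preserves the Gaussian-scale concentration of the remaining type counts and of all relevant offspring statistics. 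Hence the scaling limit and the Gaussian part $C\exp(-cx^2/n)$ of the height bound follow by applying these existing results to $\mT_n^{\red}$, while the $C\exp(-cx)$ term comes from controlling the subtree heights attached to root-type vertices via the flattening/blow-up argument of Section~\ref{sec:cvoriginal}, using exponential moments.

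The main obstacle will be to make precise that the additional conditioning on $E_n$ does not disturb the local fluctuations of $\mT_n^{\red}$'s degree sequence in a way that breaks the second-moment-type hypotheses of~\cite{BM14, MR4815972}. The cleanest way to handle this is a joint local CLT for the type-composition vector $(\#_i\cT)_{i\in[K+K']}$ together with the relevant linear statistics of the degree sequence, which can be established from the finite exponential moment assumption. As a by-product, this LCLT yields $\Prb{E_n}\ge c\,n^{-(|I|-1)/2}$; however, a naive probability-dominance approach deducing Theorem~\ref{thm:bytype} directly from Theorems~\ref{thm:mainthm}/\ref{thm:reducible} in this way produces an extra polynomial prefactor that is not absorbable into the asserted $\exp(-cx^2/n) + \exp(-cx)$ tail in the intermediate regime $x \sim \sqrt n$, which is why we instead redo the reduced-tree analysis under the refined conditioning.
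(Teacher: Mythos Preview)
Your approach is essentially the same as the paper's: the paper also redoes the reduced-tree analysis of Sections~\ref{sec:cvreduced}--\ref{sec:cvoriginal} under the type-count conditioning, with the key input being a multi-dimensional local limit theorem (Lemma~\ref{le:condpoly}) showing $\Prb{(\#_i\cT)_{i\in I}=x(n)}=\Theta(n^{-s})$, which plays the role of~\eqref{poldecay} in the proofs of Proposition~\ref{prop:concentration} and Proposition~\ref{pro:goup}. Once that polynomial lower bound is in place, the paper simply states that all the remaining steps (concentration of $\#_1(\mT_n)$, degree-sequence control, the \cite{BM14} and \cite{MR4815972} inputs, and the blow-up comparison) go through verbatim, arriving at the scaling constant $\mathfrak{c}_{\mathrm{scal}}=\sigma/2$ exactly as you computed.
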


The scaling factor does not depend on the root type, hence Theorem~\ref{thm:bytype} still holds for any fixed or random root type from $[K]$. Note that this does not cover the case where the root has type larger than $K$.

Note that~\eqref{eq:cccond} requires $x(n)$ to roughly point in the direction of the corresponding restriction of $(a_1, \ldots, a_{K+K'})$.  In cases where it points in a  different direction it may be possible to switch to a suitable offspring distribution without affecting the law of the conditioned tree.

Specifically, Section 5 in~\cite{MR3476213} describes a tilting operation: Let
\begin{align*}
	\phi_i(z_1, \ldots, z_{K+K'}) &= \sum_{w \in \cW_{K+K'}} \zeta^{(i)}(w) \prod_{j=1}^{K+K'} z_j^{\#_j w}.% \\
%	&= \mathbb{E}\left[\prod_{j=1}^{K+K'} z_j^{\#_j w^{(i)}}\right].
\end{align*}
Given a vector $\bm{\theta} = (\theta_1, \ldots, \theta_{K+K'})$ with positive coordinates % so that $\theta_i= 1$ for all $i \in [K + K'] \setminus I$
and
\begin{align}
	\phi_i(\theta_1, \ldots, \theta_{K+K'}) < \infty 
\end{align}
for all $1 \le i \le K+ K'$, consider the offspring distribution $\bzeta_{\bm{\theta}}:=(\zeta^{(1)}_{\bm{\theta}}, \ldots, \zeta^{(K+K')}_{\bm{\theta}})$ with
\[
	\zeta_{\bm{\theta}}^{(i)} (w) = \frac{1}{\phi_i(\theta_1, \ldots, \theta_{K+K'})} \zeta^{(i)}(w) \prod_{j=1}^{K+K'} \theta_j^{\#_j w}, \qquad 1 \le i \le K+K'
\]
for all $w \in \cW_{K+K'}$. The mean matrix $	A^{\bm{\theta}} = \left( A_{i,j}^{\bm{\theta}} \right)_{1 \le i,j \le K+K'}$ of $\bzeta_{\bm{\theta}}$  has coefficients given by
\[
	A_{i,j}^{\bm{\theta}} = \frac{\theta_j \frac{\partial \phi_i}{\partial z_j}   (\theta_1, \ldots, \theta_{K+K'})}{ \phi_i(\theta_1, \ldots, \theta_{K+K'}) }.
\]

For $I = [K+K']$ the result of conditioning a Bienaym\'e tree with offspring distribution $\bzeta_{\bm{\theta}}$ on having precisely $x_i(n)$ many vertices of type $i$ for all $i \in [K+K']$ is identically distributed as $\mT_n$, see~\cite{MR3476213}, Theorem 5.1. Note that when $I$ is a proper subset of $[K+K']$ then this does not need to hold. 

Hence, we restrict ourselves to the case $I=[K+K']$. The question is, if~\eqref{eq:cccond} is not satisfied and instead we have
\[
                x_i(n) = \bar{a}_i n + O(\sqrt{n})
\]
for constants $\bar{a}_i \ge 0$, $i \in [K+K']$,  can we find parameters $\theta_1, \ldots, \theta_{K+K'}$ such that $\bzeta_{\bm{\theta}}$ satisfies the assumptions in Theorem~\ref{thm:bytype} and such that the left $1$-eigenvector $(a_i^{\bm{\theta}})_{1 \le i \le K+K'}$ of $A^{\bm{\theta}}$ with nonnegative coordinates normalized to $\sum_{i=1}^{K} a_i^{\bm{\theta}} = 1$ (whose existence and uniqueness is then guaranteed) satisfies $a_i^{\bm{\theta}} = \bar{a}_i$ for all $i \in I$?

For a given offspring distribution, it may be possible to directly compute a suitable tilting parameter $\bm{\theta}$. We provide an application in Subsection~\ref{sec:biconditioned}. There are also general results on the existence of tiltings in a more restricted setting, see Subsection~\ref{sec:tilting} below.

\subsubsection{Application to biconditioned monotype trees}
\label{sec:biconditioned}

Using Theorem~\ref{thm:bytype}, we recover and slightly extend a result of~\cite{zbMATH07768803} on biconditioned monotype trees: Consider a critical monotype Bienaym\'e tree $\mathfrak{t}$ whose offspring distribution $(q_i)_{i \ge 0}$ has finite exponential moments, and satisfies the usual constraints $q_0 >0$ and $q_0+q_1 <1$. We would like to study the asymptotic shape of the tree $\mathfrak{t}_n$ obtained by conditioning $\mathfrak{t}$ on having $n$ vertices and $k_n$ leaves, with
\begin{align}
	\label{eq:va1}
    k_n = \alpha n + O(\sqrt{n})
\end{align}
for some constant $0<\alpha<1$. We can model this as a special case of the tree $\mT_n$ with $K=1$ and $K'=1$. We choose the two-type offspring distribution $\bm{\zeta}=(\zeta^{(1)}, \zeta^{(2)})$ such that
\begin{align*}
\zeta^{(1)}(\emptyset) &= 0, & \zeta^{(1)}(w) &= q_{a+b} (1-q_0)^{a-1} q_0^{b}, \\
\zeta^{(2)}(\emptyset) &= 1, & \zeta^{(2)}(w) &= 0
\end{align*}
for $w \in \cW_2 \setminus \{\emptyset\}$ with $a:= \#_1 w$ and $b:= \#_2 w$. If we select the root type of $\mT$ according to an independent random choice with outcome $1$ with probability $1-q_0$, then up to the types of vertices the $\bm{\zeta}$-Bienaym\'e tree $\cT$ is distributed like $\mathfrak{t}$, and the result $\mT_n$ of conditioning $\cT$ on $(\#_1 \cT, \#_2 \cT) = (n- k_n, k_n)$ is distributed like $\mathfrak{t}_n$. Whenever $n>1$ the root type of $\mT_n$ is equal to $1$ almost surely, hence instead of a random root type we may set it deterministically to $1$ and only consider $n>1$.

For parameters $\bm{\theta} = (\theta_1, \theta_2)$ with $\theta_1, \theta_2>0$, the tilted offspring distribution has mean matrix 
\[
A = \frac{\phi_{\ge 1} '( (1-q_0)\theta_1 + q_0 \theta_2)}{\phi_{\ge 1}((1-q_0)\theta_1 + q_0 \theta_2)}
\begin{pmatrix}
	(1-q_0)\theta_1 &  q_0\theta_2 \\
	0 &  0
\end{pmatrix}
\]
with $\phi_{\ge1}(z) := (1-q_0)^{-1} \sum_{k \ge 1} q_k z^k$. Setting $t := \frac{q_0}{1-q_0} \frac{\theta_2}{\theta_1}$ and $\psi_{\ge1}(z) := \frac{z\phi_{\ge 1}'(z)}{\phi_{\ge 1}(z)}$, this simplifies to
\[
A =  \frac{1}{1+t} \psi_{\ge 1}( (1-q_0)\theta_1(1+t) )
\begin{pmatrix}
	1 &  t \\
	0 &  0
\end{pmatrix}.
\]
Let $\rho_\phi>1$ denote the radius of convergence of $\phi_{\ge 1}(z)$. For $z \in ]0,\rho_\phi[$, let $X$ denote the random positive integer with probability generating series $\Ex{v^X} = \phi_{\ge1}(vz) / \phi_{\ge 1}(z)$. Differentiating yields $\psi'_{\ge1}(z) = \frac{1}{z}\mathbb{V}(X)  >0$. Note that $\psi_{\ge 1}(0)$ is equal to the smallest positive integer $r_0 \ge 1$ with $q_{r_0} >0$.
Hence $\psi_{\ge 1}: [0, \rho_{\phi}[ \to [r_0, \infty[$ is continuous, strictly increasing, with $\psi_{\ge 1}(0) = r_0$, $\psi_{\ge 1}(1) >1$, and $\nu := \lim_{z \to \rho_\phi} \psi_{\ge 1}(z) \in ]1, \infty]$.

Now, if
\begin{align}
	\label{eq:va2}
	1 - 1/r_0 < \alpha < 1 - 1/\nu
\end{align}
with the convention $1/\infty  = 0$, then there exists $0< z_0 < \infty$ with $z_0 \le \rho_\phi$ such that $\psi_{\ge 1}(z_0) = 1 / (1-\alpha)$. Setting $\theta_1= z_0 (1-\alpha) / (1-q_0)$ and $\theta_2 = \alpha z_0 / q_0$ we obtain
\[
A = \begin{pmatrix}
	1 &  \alpha/(1-\alpha) \\
	0 &  0
\end{pmatrix}.
\]
Its left $1$-eigenvector normalized to have the sum of the first $K$ coordinates equal to $1$ is given by
\[
(a_1, a_2) = (1, \alpha/(1-\alpha)).
\]
It is collinear to $(1-\alpha, \alpha)$. The conditioning fulfills \\
$(\#_1 \cT, \#_2 \cT) = \left(n- k_n, \frac{\alpha}{1-\alpha}(n-k_n)+O(\sqrt{n})\right).$
Hence we may apply Theorem~\ref{thm:bytype} to the tilted offspring distribution $\bm{\zeta}_{(\theta_1, \theta_2)}$, yielding:

\begin{corollary}
	If~\eqref{eq:va1} and~\eqref{eq:va2} hold, then there exists a constant $\mathfrak{c}_{\mathrm{scal}}>0$ such that
	\begin{equation*}
		(\mathfrak{t}_n,  \frac{\mathfrak{c}_{\mathrm{scal}}}{\sqrt{n}} d_{\mathfrak{t}_n}, m_{\mathfrak{t}_n}) \overset{(d)}{\longrightarrow} (\cT_{\be},  d_{\mathcal{T}_{\be}}, m_{\mathcal{T}_{\be}}),
	\end{equation*}
	as $n \to \infty$ along all positive integers $n$ satisfying that the probability for $\mathfrak{t}$ to have $n$ vertices and $k_n$ leaves is positive. Moreover, there exist constants $C,c>0$ such that
	\[
	\Prb{\mathrm{H}(\mathfrak{t}_n)>x} \le C\exp(-cx^2 /n) + C\exp(-cx)
	\]
	uniformly for all $x>0$ and all positive integers $n$ satisfying that the probability for $\mathfrak{t}$ to have $n$ vertices and $k_n$ leaves is positive.
\end{corollary}
Note that here we made use of the fact that Theorem~\ref{thm:bytype} permits reducible offspring distributions. As noted above, the scaling limit is a slight extension of a result by~\cite{zbMATH07768803}, which assumed $k_n = \alpha n + O(1)$ instead of~\eqref{eq:va1}. We note that recent work~\cite{dan2025limitsbiconditionedbienaymegaltonwatsontrees} studies limits of this model when $k_n$ is constant.

\subsubsection{Existence of tiltings for entire irreducible offspring distributions}
\label{sec:tilting}

It is difficult to determine general conditions that ensure the existence of an appropriate tilting. We apply the results of \cite{PAT25}. Let us make the following assumptions on the offspring distribution $\bzeta$:
\begin{itemize}
    \item the mean matrix is irreducible, that is $K'=0$;
    \item $\bzeta$ is entire, that is, for all $i \in [K]$, all $z>0$:
    \begin{align*}
        \sum_{w \in \cW_K} \zeta^{(i)}(w) z^{|w|} < \infty;
    \end{align*}
    \item $\bzeta$ is nonlocalized: that is, for any $X \in \R^K \setminus \{\bm{0}\}$, there exists $i \in [K]$ such that, if $(k_1, \ldots, k_K)$ is distributed according to $\mu^{(i)}$, then $\sum_{j=1}^K X_j k_j$ is not deterministic.
    \item $\bzeta$ is finite: that is, for all $i\in [K]$, $\Prb{|\cT^{(i)}|<\infty}>0$.
\end{itemize}

Following \cite{PAT25}, Definition 6.1, we also define the notion of strongly accessible direction.

\begin{definition}
Let $X\in[0,+\infty[^K\setminus\{\bzero\}$. We say that $X$ is {\bf accessible} if there exists a tree $T$ with at least two vertices and with $\Prb{\cT=T}>0$, such that the following holds.
\begin{itemize}
    \item the root of $T$ has a type $i_0$ with $\zeta^{(i_0)}(\varnothing)>0$;
    \item at least one leaf of $T$ also has type $i_0$; 
    \item the vectors $(\tilde{N}_1(T),\dots,\tilde{N}_K(T))^{\intercal}$ and $X$ are collinear, where $\tilde{N}_j(T)$ is the number of nodes of type $j$ in $T$ (counting the leaves but excluding the root).
\end{itemize}
We say $X$ is \textbf{strongly accessible} if it belongs to the interior of the convex hull of the set of all accessible directions. In particular, all coordinates of $X$ are positive.
%We define $\mathcal{D}_{sad}$ as the set of strongly accessible directions.
\end{definition}

In other words, accessible directions are the vectors of type proportions that can be realized by a finite tree with positive probability, while strongly accessible directions are vectors which are in the interior of the convex hull of accessible directions.

%\textcolor{blue}{Ben: The setting in this section allows for reducible offspring distributions. The idea would be to generalize your result to this setting. }

\begin{corollary}
\label{thm:bytype2}
Suppose that $\bzeta$ is entire, finite, irreducible, nondegenerate and nonlocalized. 
Let $Y \in ]0, +\infty[^K$ be strongly accessible, and $I=[K].$ % be a nonempty set of types. 
Let  $\mT_n$ be the \BGW tree with offspring distribution $\bzeta$, conditioned on $(\#_i\cT)_{i \in I} = x(n)$ where, for all $i \in I$:
\begin{align*}
x_i(n)=Y_i n + O(\sqrt{n}).
\end{align*}
Then there exists a constant $\mathfrak{c}_{\mathrm{scal}}(\bzeta, Y)>0$ such that  
\begin{equation*}
		(\mT_n,  \frac{\mathfrak{c}_{\mathrm{scal}}(\bzeta,Y)}{\sqrt{n}} d_{\mT_n}, m_{\mT_n}) \overset{(d)}{\longrightarrow} (\cT_{\be},  d_{\mathcal{T}_{\be}}, m_{\mathcal{T}_{\be}}),
       \end{equation*}
	as $n \to \infty$ along all positive integers satisfying $\Prb{(\#_i\cT)_{i \in I} = x(n)}>0$. Moreover, there exist constants $C,c>0$ such that
	\[
	\Prb{\mathrm{H}(\mT_n)>x} \le C\exp(-cx^2 /n) + C\exp(-cx)
	\]
	uniformly for all $x>0$ and all positive integers $n$ satisfying $\Prb{(\#_i\cT)_{i \in I} = x(n)}>0$.
\end{corollary}
In particular, we do not require here that $\bzeta$ is critical.

The proof of Corollary \ref{thm:bytype2} is immediate from Theorem \ref{thm:bytype}. Indeed, the assumptions made allow us to directly apply \cite{PAT25}, Theorem 3.7, which ensures the existence of parameters $\bm{\theta}$ such that $\bzeta_{\bm{\theta}}$ is critical and its left Perron-Frobenius eigenvector is collinear with $Y$. As we assumed $\bm{\zeta}$ to be entire, it follows that $\bm{\zeta}_{\bm{\theta}}$ is regular critical. We then apply Theorem \ref{thm:bytype} to the tilted law $\bzeta_{\bm{\theta}}$. Note that the assumption that $\bm{\zeta}$ is entire is a fundamental assumption in the results of \cite{PAT25}. Suitable tiltings may exist in a broader context (for example also in the reducible setting considered above), but currently there is no general result that guarantees their existence.

\paragraph*{Outline of the paper}

We start by defining in Section \ref{sec:trees} operations on trees which we call {\em flattening} and {\em reduction}. We prove  convergence of the rescaled reduced tree in Section \ref{sec:cvreduced}, deduce from it  convergence of the flattened tree in Section \ref{sec:cvflattened}, and finally use it in Section \ref{sec:cvoriginal} to prove  convergence of the rescaled multitype size-conditioned \bgw tree to the Brownian CRT. The main tool of this last step is a blow-up operation, which can be thought of as an inverse (in distribution) of the reduction operation. In Section~\ref{sec:reducible} we prove Theorem~\ref{thm:reducible}, and in Section~\ref{sec:types} we prove Theorem~\ref{thm:bytype}. Finally, Section~\ref{sec:app} is an appendix containing some auxiliary technical results.

\paragraph*{Notation}
We use the conventions that $\N=\{1,2,\ldots\}$ and $\N_0=\{0,1,\ldots\}$.
Throughout, we denote by  $\convp$ and $\convd$ convergence in probability and distribution, respectively. 
For any two real-valued functions $f,g$, we say that $f=O(g)$ if  $\limsup\limits_{x\to \infty}|\frac{f(x)}{g(x)}|<\infty$ and that $f=\Theta(g)$ if $0<\liminf\limits_{x\to\infty}|\frac{f(x)}{g(x)}|\leq \limsup\limits_{x\to\infty}|\frac{f(x)}{g(x)}| < \infty.$ We also denote by $\mathbb{E}[X]$ and $\mathbb{V}[X]$ the respective expectation and variance of a random variable $X$. For all $n \geq 1$, we denote by $[n]$ the set $\{1,\ldots,n\}$. 
Since the sequence $\blambda$ will be fixed once and for all, and that all our trees will have root type $1$, we will drop the dependence in $\blambda$ and $i$ to ease the notation, and thus write $\cT_n$ for $\cT^{(i)}_{\blambda,n}$.

\section{Operations on random trees}
\label{sec:trees}

We define here two operations on trees, namely \emph{flattening} and \emph{reduction}.

\subsection{Flattening and reduction operations}
\label{ssec:operations}

Let $T \in \bT^{(K,1)}$ be a $K$-type tree with root type $1$. We define two operations on $T$, which we respectively call \textit{flattening} and \textit{reduction} of the tree $T$. Roughly speaking, flattening $T$ consists in reattaching all vertices - except the root - to their most recent ancestor of type $1$, so that all vertices of type different from $1$ become leaves. Reducing $T$ consists in removing from the flattened tree all vertices of type different from $1$. In particular, the flattened tree has as many vertices as $T$, while the reduced tree does not. A reduced tree is by definition a monotype tree (all its vertices have type $1$).

Let us now define these operations more formally. We start by defining blobs, which are roughly speaking made of all vertices in the original tree sharing the same most recent ancestor of type $1$.

\begin{definition}[Blob]
	Let $T$ be a multitype tree and $x \in T$ of type $1$. We define the blob $B_x$ associated to $x$ as
	\begin{align*}
		B_x := \bigcup_{a \in A_x} \llbracket x,a \rrbracket \cup \{x\},
	\end{align*}
	where $A_x := \{a \in T, a \neq x, x \preceq a, \forall y \in \rrbracket x,a \llbracket \, e_T(y) \neq 1 \}$.
\end{definition}

Note that the root of a blob is always of type $1$, and leaves of a blob that are not of type $1$ are also leaves in the original tree. The {\em reduced} tree $T^{\redu}$ is obtained by contracting all blobs and discarding the vertices of type different from $1$.

\begin{definition}[Reduced tree]\label{def:reducedtree}
	Let $T$ be a multitype tree with root type $1$. We define the reduced tree $T^{\redu}$, which is a monotype tree with $\#_1(T)$ vertices, as follows. Letting $x \in T$ be of type $1$, we set $A^{(1)}_x := \{u \in A_x \setminus \{x\}, e_T(u)=1 \}$. We denote the elements of $A^{(1)}_x$ by $c_1(x), \ldots, c_{|A^{(1)}_x|}(x)$ in lexicographical order. We recursively define a set $T^{\redu} \subseteq \cU$ and a map $\pi: T^{\redu} \rightarrow \cU$  as follows.
	\begin{itemize}
		\item $\varnothing \in T^{\redu}$ and $\pi(\varnothing)=\varnothing$;
		\item for all $u \in T^{\redu}$, all $k \geq 1$, $uk \in T^{\redu}$ if and only if $1 \le k \leq |A^{(1)}_{\pi(u)}|$; for all such $u,k$, we set $\pi(uk)=c_k(\pi(u))$.
	\end{itemize}
	Then, observe that $T^{\redu}$ is a tree, which we call the reduced tree.
\end{definition}

In particular, the planar structure of $T^{\redu}$ is naturally obtained from the one of $T$ as the only one preserving the relative lexicographical order of its vertices. We now define the flattened tree, constructed from the original tree by exploding all blobs and keeping all vertices;

\begin{definition}[Flattened tree]\label{def:flattenedtree}
	Let $T$ be a multitype tree. The flattened tree $T^{\flat}$ is obtained from the reduced tree $T^{\redu}$ by connecting, for all $x \in T$ of type $1$, the elements of $A_x \setminus (\{x\} \cup A^{(1)}_x)$ to $x$. We give $T^{\flat}$ the only lexicographical order such that, for any $x \in T^{\flat}$ of type $1$, and any $1 \leq k_1 \leq k_2 \leq k_x(T^{\flat})$, $e_{T^{\flat}}(xk_1) \leq e_{T^{\flat}}(xk_2)$ and order the non type 1 vertices according to any fixed but arbitrary ordering inside each type.
	
	In particular, $\#_i(T^{\flat})=\#_i(T)$ for all $i \in [K]$, and the vertices of type $j \neq 1$ in $T^{\flat}$ are leaves.
\end{definition}

\subsection{Flattened Projection}
We can define the flattened projection of a family of probability distributions.

\begin{definition}[Flattened projection]\label{def:flattenedprojection}
	Let $\bzeta$ be a $K$-type offspring distribution, and $\cT \in \bT^{(K,1)}$ be a $\bzeta$-\bgw tree. 
	We define the flattened projection $\btmu$ of $\bzeta$ as the probability distribution on $\N_0^K$ such that, for all $k_1, \ldots, k_K \geq 0$,
	\begin{align*}
		\btmu(k_1, \ldots, k_K) = \Prb{(\#_1(B_\varnothing), \ldots, \#_K(B_\varnothing) )=(k_1+1,k_2,\ldots, k_K)}.
	\end{align*}
	Notice that $\btmu$ is a distribution on $\N_0^K$, corresponding to the distribution of the vertex types of the children of the root in the blob $B_\varnothing$. It is thus also a projection of the distribution $\btzeta$, when $\btzeta$ denotes the offspring distribution of the flattened tree of a $K$-\bgw tree with offspring distribution $\bzeta$. We denote by $\tmu_1$ the measure on $\N_0$ with $\tmu_1(k)=\Prb{\#_1(B_\varnothing)=k}$.
\end{definition}

This leads to a slightly different recursive construction of the flattened tree according to this probability distribution $\btmu$. Consider a family of i.i.d. variables $(X_u, u \in \cU)$ with values in $\N_0^{K}$, with law $\btmu$. We recursively construct a random $K$-type tree $\cT^{(1)} := (t, e_t)$ with root type $1$ as follows. First, $\varnothing \in t$ and $e_t(\varnothing) = 1$. Next, inductively, if $u \in t$ and $e_t(u) = 1$, then write $X_u := (k_1, k_2,\dots,k_K)$. Then for $k \in \N$, $uk \in t$ if and only if $1\leq k \leq \sum_{i=1}^Kk_i$ and $e_t(uk)$ is equal to the smallest index $j$ such that $\sum_{i=1}^jk_i\geq k$. Note that if $e_t(u)\neq 1$ then $u$ is a leaf of the tree (indeed, in the flattened tree only the type-1 vertices are fertile).

We will always work under the natural coupling induced by the flattening and reduction operations.

\label{def:treeandzetaproperties}
We denote by $\bxi := (\xi^{(1)}, \ldots, \xi^{(K)})$  a random vector with distribution $\bmu$ as in Definition~\ref{def:projection}, and $\xi^{(i)}=(\xi^{(i)}_1, \ldots, \xi^{(i)}_K)$ is then distributed according to $\mu^{(i)}$. 

We also denote by $\btxi:=(\txi_1, \ldots, \txi_K)$ \label{def:xis} a random vector on $\N_0^K$ with distribution $\btmu$ (the projection of $\btzeta$). Finally, we denote by $\cT_n$ a $\bzeta$-\bgw tree $\cT$ with root type $1$ conditioned on the event that $\#_{\blambda} \cT=n$ (provided that this holds with positive probability), and by $\ctT_n$ the associated flattened tree. 
Observe that the $K\times K$ mean matrix $M$ defined earlier satisfies: for all $1 \leq i,j \leq K$,
\begin{align*}
	M_{i,j}=\mathbb{E}[\xi^{(i)}_j].
\end{align*}

We illustrate the reduction and flattening operations in Figures~\ref{fig:operationsontrees} and~\ref{fig:flat2}. Observe that the reduced tree of the flattened tree is the reduced tree itself, and is also a subtree of the flattened tree. It is not hard to verify that if $T$ is a $K$-type \bgw tree with offspring distribution $\bzeta$, then the flattened tree is again a $K$-type \bgw tree, and we denote by $\btzeta$ its offspring distribution. 

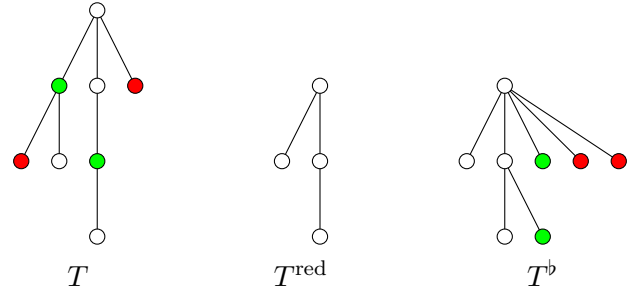
\begin{figure}[!ht]
	\centering
	\begin{tabular}{c c c c c}
		\begin{tikzpicture}
			\draw (-1,-2) -- (-0.5,-1) -- (0,0) -- (0,-3) (.5,-1) -- (0,0) (-.5,-1) -- (-.5,-2);
			\draw[fill=white] (0,0) circle(.1);
			\draw[fill=white] (0,-1) circle(.1);
			\draw[fill=white] (-.5,-2) circle(.1);
			\draw[fill=white] (0,-3) circle(.1);
			\draw[fill=green] (-.5,-1) circle(.1);
			\draw[fill=green] (0,-2) circle(.1);
			\draw[fill=red] (-1,-2) circle(.1);
			\draw[fill=red] (.5,-1) circle(.1);
		\end{tikzpicture}
		&  
		\begin{tikzpicture}
			\draw[white] (0,0) -- (1,0);
		\end{tikzpicture}
		&
		\begin{tikzpicture}
			\draw (-0.5,-1) -- (0,0) -- (0,-2);
			\draw[fill=white] (0,0) circle(.1);
			\draw[fill=white] (0,-1) circle(.1);
			\draw[fill=white] (-.5,-1) circle(.1);
			\draw[fill=white] (0,-2) circle(.1);
		\end{tikzpicture} 
		&  
		\begin{tikzpicture}
			\draw[white] (0,0) -- (1,0);
		\end{tikzpicture}
		&  
		\begin{tikzpicture}
			\draw (-0.5,-1) -- (0,0) -- (0,-2) (.5,-1) -- (0,0) -- (1,-1) (1.5,-1) -- (0,0) (.5,-2) -- (0,-1);
			\draw[fill=white] (0,0) circle(.1);
			\draw[fill=white] (0,-1) circle(.1);
			\draw[fill=white] (-.5,-1) circle(.1);
			\draw[fill=white] (0,-2) circle(.1);
			\draw[fill=green] (.5,-1) circle(.1);
			\draw[fill=green] (.5,-2) circle(.1);
			\draw[fill=red] (1.5,-1) circle(.1);
			\draw[fill=red] (1,-1) circle(.1);
		\end{tikzpicture}
		\\
		$T$   &  
		\begin{tikzpicture}
			\draw[white] (0,0) -- (1,0);
		\end{tikzpicture} & $T^{\redu}$      &  
		\begin{tikzpicture}
			\draw[white] (0,0) -- (1,0);
		\end{tikzpicture} & $T^{\flat}$
	\end{tabular}
	\caption{Left: a tree $T$ with $3$ types. Type $1$ is represented in white, type $2$ in green and type $3$ in red. Middle: the reduced tree $T^{\redu}$. Right: the flattened tree $T^{\flat}$.}
	\label{fig:operationsontrees}
\end{figure}

\begin{figure}[!ht]
	\centering
	\begin{tabular}{c c c c c}
		\begin{tikzpicture}
			\draw (-1,-2) -- (-0.5,-1) -- (0,0) -- (0,-2) (.5,-1) -- (0,0) (-.5,-1) -- (-.5,-2) (0,0)--(-1,-1);
			\draw[fill=white] (0,0) circle(.1);
			\draw[fill=white] (0,-1) circle(.1);
			\draw[fill=white] (-.5,-2) circle(.1);
			\draw[fill=green] (-.5,-1) circle(.1);
			\draw[fill=white] (0,-2) circle(.1);
			\draw[fill=red] (-1,-2) circle(.1);
			\draw[fill=red] (.5,-1) circle(.1);
			\draw[fill=green](-1,-1) circle(.1);
		\end{tikzpicture}
		&  
		
		&
		\begin{tikzpicture}
			\draw (-1,-2) -- (-0.5,-1) -- (0,0) -- (0,-2) (.5,-1) -- (0,0) (-.5,-1) -- (-.5,-2) (0,0)--(-1,-1);
			\draw[fill=white] (0,0) circle(.1);
			\draw[fill=white] (0,-1) circle(.1);
			\draw[fill=white] (-.5,-2) circle(.1);
			\draw[fill=red] (-.5,-1) circle(.1);
			\draw[fill=white] (0,-2) circle(.1);
			\draw[fill=red] (-1,-2) circle(.1);
			\draw[fill=green] (.5,-1) circle(.1);
			\draw[fill=green](-1,-1) circle(.1);
		\end{tikzpicture}
		&  
		\begin{tikzpicture}
			\draw[white] (0,0) -- (1,0);
		\end{tikzpicture}
		&  
		\begin{tikzpicture}
			\draw (-0.5,-1) -- (0,0) -- (0,-2) (.5,-1) -- (0,0) -- (1,-1) (1.5,-1) -- (0,0) (0,0)--(2,-1);
			\draw[fill=white] (0,0) circle(.1);
			\draw[fill=white] (0,-1) circle(.1);
			\draw[fill=white] (-.5,-1) circle(.1);
			\draw[fill=white] (0,-2) circle(.1);
			\draw[fill=green] (.5,-1) circle(.1);
			\draw[fill=green] (1,-1) circle(.1);
			\draw[fill=red] (1.5,-1) circle(.1);
			\draw[fill=red] (2,-1) circle(.1);
		\end{tikzpicture}
		\\
		$T_1$   &  
		\begin{tikzpicture}
			\draw[white] (0,0) -- (1,0);
		\end{tikzpicture} & $T_2$      &  
		\begin{tikzpicture}
			\draw[white] (0,0) -- (1,0);
		\end{tikzpicture} & $T_1^{\flat}=T_2^{\flat}$
	\end{tabular}
	\caption{Left and middle: two different trees $T_1$ and $T_2$ with $3$ types, which lead to the same flattened tree on the right. Type $1$ is represented in white, type $2$ in green and type $3$ in red. Right: the common flattened tree $T_1^{\flat}=T_2^{\flat}$.}
	\label{fig:flat2}
\end{figure}
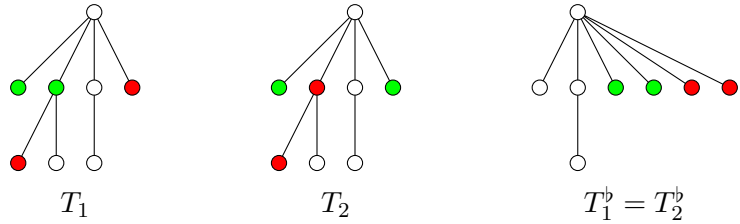

\subsection{Contour function and height function}
\label{def:contour height}
We define here two ways to code a plane tree by a function, which will be used in the rest of the paper: the \textit{contour function} and the \textit{height function}.

\paragraph*{Contour function}

To define the contour function $C_t$ of a plane tree $t$, we consider the function $f_t:\{0,\dots,2(|t|-1)\} \to t$ constructed as follows. First, $f_t(0)=v_1$ (the root). Now for $i<2(|t|-1)$, given $f_t(i)=v$, 
if all children of $v$ have already been visited, then we define $f_t(i+1)$ as the parent of $v$; otherwise, define $f_t(i+1)$ as the smallest child of $v$ (in the lexicographical order) which was not already visited, i.e. which does not belong to $\{f_t(j), j \leq i\}$. The contour function of $t$ is then the function $(C_t(x), x \in [0,2(|t|-1)])$, where $C_t(i)$ is the height of $f_t(i)$ for all $i \in \{0,\dots,2(|t|-1)\}$ and interpolated linearly in between.

\paragraph*{Height function}
Let $t$ be a plane tree, and recall that $v_1(t), \ldots, v_{|t|}(t)$ is the list of the vertices of $t$ listed in lexicographical order. The height function of $t$, denoted by $H_t$, is given as follows. For all $i \in \llbracket 0,|t|-1 \rrbracket$, $H_t(i)=h(v_{i+1}(t))$ is the height of the vertex $v_{i+1}(t)$, and $H_t$ is then extended by linear interpolation to $[0,|t|-1]$.

\subsection*{Frequently used terminology for Sections~\ref{sec:cvreduced},~\ref{sec:cvflattened} and~\ref{sec:cvoriginal}}

\begin{itemize}
	\item $K$: number of types. Type $1$ is the root type. %\lou{[Always?]}
	\item $m_T$ mass  measure on a finite tree $T$, see page~\pageref{def:massmeasure}.
	\item $C_T$ contour function of a tree $T$, see page~\pageref{def:contour height}.
    \item $H_T$ height function of a tree $T$, see page~\pageref{def:contour height}.
	\item $\bzeta := \left(\zeta^{(1)}, \ldots, \zeta^{(K)} \right)$: original probability distribution (ordered), see page~\pageref{def:mgwzeta} for the construction of a tree from this and page~\pageref{def:treeandzetaproperties} for further assumptions. This probability distribution is assumed to be critical, irreducible and to have finite exponential moments. %\lou{[But sometimes it has length $K+K'$ right?]} \ben{Yes, I'm a bit unsure how similar the notation in the reducible and irreducible case should be. Maybe we could write "Frequently used terminology for section 3, 4, and 5"?}
	\item $\bmu := \left( \mu^{(1)}, \ldots, \mu^{(K)} \right)$: projection of $\bzeta$, see page~\pageref{def:projection}.%, $\bmu=p_* \bzeta$.
	%\item $\btzeta := \left(\tzeta^{(1)}, \ldots, \tzeta^{(K)} \right)$: flattened distribution (ordered).
	\item $\btmu $ probability measure on $\N_0^{K}$, flattened projection of $\bzeta$, see page~\pageref{def:flattenedprojection}.%, $\btmu=p_* \btzeta$.
    \item $\tmu_1$ probability measure on $\N_0$, see page~\pageref{def:flattenedprojection}. 
	\item $\bxi := \left(\xi^{(1)}, \ldots, \xi^{(K)} \right)$: random vector  with law $\bmu$, see page~\pageref{def:xis}.
	\item $\btxi := \left(\txi_1, \ldots, \txi_K \right)$: random vector with law $\btmu$, see page~\pageref{def:xis}.
	%\item $\nu$: reduced distribution (to the root-type tree)
%	\item $k_v(i)$ number of offspring of type $i$ of a vertex $v$, see page~\pageref{defkvi}. \lou{[Page ref wrong]}
	\item $((\txi_i^{(j)})_{i \in [K]})_{j \geq 1}$ independent copies of $(\txi_i)_{i \in [K]}$
	%\item $(\txi_i^{(j)}, j \geq 1)$ i.i.d. copies of $\txi_i$.
	\item $\#_i(T)$: number of vertices of type $i$ in tree $T$.
	\item $N_d(T)$: number of vertices with $d$ children of type $1$ in tree $T$.
	\item $|T|$ size of tree $T$, i.e. the number of vertices of $T$.
	\item $\cT$ a $\bm{\zeta}$-Bienaym\'e tree, see page~\pageref{def:treeandzetaproperties}.
	\item $\cT_n$: $(\cT | \sum \lambda_i \#_i(\cT)=n)$
	\item $\ctT$: the flattened tree, see page~\pageref{def:flattenedtree}.
	\item $\ctT_n$: $(\ctT | \sum \lambda_i \#_i(\ctT)=n)$
	\item $T^{\redu}$: the (root-type) reduced tree, see page~\pageref{def:reducedtree}. %\ben{I changed the makro accordingly.}
	\item $c_1:=\sum\limits_{i=1}^K\lambda_i\mathbb{E}[\txi_i]$.
	\item $h_{T}(v)$: height of the vertex $v$ of tree $T$, also denoted $h(v)$ if the tree is clear.
	\item $\Phi$ the blow-up operation, see page~\pageref{def:blowupoperation}.
\end{itemize}

Terminology in Sections~\ref{sec:reducible} and~\ref{sec:types} is analogous, but types range from $1$ to $K+K'$ instead due to additional subcritical components.

\section{Convergence of the reduced tree}
\label{sec:cvreduced}

In what follows, $\bzeta$ is a $K$-tuple of probability distributions with finite exponential moments, which is irreducible and critical. Recall that we denote by $\cT_n$ the $\bzeta$-\bgw tree with root type $1$, conditioned on $\#_{\blambda}(\cT) = n$. We will only consider integers $n$ for which the probability of this event is positive. Recall that $\kT_n$ is the reduced tree of $\cT_n$ defined above, and that $\kT_n$ does not have $n$ vertices, but a random number $\#_1(\cT_n)$.

We define a constant of interest, which appears in the renormalization of the tree $\kT_n$:
\begin{equation}
	\label{eq:kappatree}
	\kappa_{tree}:=\frac{1}{2} \sqrt{\mathbb{V}[\txi_1] \, \sum\limits_{i=1}^K \lambda_i \mathbb{E}[\txi_i]}.
\end{equation}

The goal of this section is to prove convergence of the reduced tree of a size-conditioned multitype tree.

\begin{theorem}
	\label{thm:ConvRed}
	The following convergence holds in distribution, for the rooted Gromov--Hausdorff--Prokhorov distance:
	\begin{align*}
		(\kT_n,\kappa_{tree}n^{-\frac{1}{2}}d_{\kT_n},m_{\kT_n})\convd (\mathcal{T}_{\be},d_{\mathcal{T}_{\be}},m_{\mathcal{T}_{\be}}).
	\end{align*}
\end{theorem}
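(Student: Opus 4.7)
The strategy is to exploit that, while $\kT_n$ is not itself a conditioned monotype Bienaym\'e tree, it is, conditionally on its degree sequence, uniformly distributed among plane trees with that degree sequence. Once this is established, one can invoke the scaling limit theorem of Broutin and Marckert~\cite{BM14} for trees with prescribed degree sequences, provided the random degree sequence of $\kT_n$ satisfies the standard asymptotic regularity conditions with high probability.

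First, I would decompose the sampling of $\cT$ along the reduction: generate $\kT$ as a monotype Bienaym\'e tree whose offspring distribution is the first marginal of $\btmu$, then to each $v \in \kT$ attach an independent ``blob vector'' $\btxi^{(v)}$, distributed as $\btxi$ conditioned on $\txi_1 = k_v(\kT)$. Under this coupling the conditioning event reads
\[
	\lambda_1 + \sum_{v \in \kT} \sum_{i=1}^K \lambda_i \txi_i^{(v)} = n,
\]
which depends on $\kT$ only through its degree sequence $(N_d(\kT))_{d \ge 0}$ and the attached blob data. Since a monotype Bienaym\'e tree is uniform given its degree sequence, this implies that, conditionally on $(N_d(\kT_n))_{d \ge 0}$, the tree $\kT_n$ is uniform among plane trees with that degree sequence.

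The main obstacle is then to verify the hypotheses of~\cite{BM14}: writing $N := |\kT_n|$, we need (i) $N/n \convp 1/c_1$, (ii) $\frac{1}{N}\sum_d d(d-1) N_d(\kT_n) \convp \mathbb{V}[\txi_1]$, and (iii) $\max\{d : N_d(\kT_n) > 0\} = o_{\mathbb{P}}(\sqrt{n})$. Here $\tilde{\mu}_1$ is critical (inherited from criticality and irreducibility of $\bzeta$ via the Perron-Frobenius structure) with finite exponential moments, so the unconditioned pair $(|\kT|, \#_{\blambda}(\cT))$ admits a two-dimensional local central limit theorem. Conditioning on $\#_{\blambda}(\cT) = n$ then forces $|\kT_n|$ to concentrate at $n/c_1$ and, via a standard tilting argument, leaves the empirical distribution of $(\txi^{(v)})_{v \in \kT_n}$ close to $\btmu$; in particular the empirical degree distribution converges to $\tilde{\mu}_1$ in a strong enough sense. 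The maximum-degree bound (iii) follows by a union bound over at most $O(n)$ iid copies of $\txi_1$, using the finite exponential moment assumption. Carrying out this LCLT-plus-tilting analysis under the non-standard joint conditioning is the most technical step of the argument.

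With the conditional uniformity together with (i)--(iii) in hand, \cite[Theorem 1]{BM14} yields the GHP convergence
\[
	\bigl(\kT_n,\ \tfrac{\sigma_n}{2\sqrt{N}}\, d_{\kT_n},\ m_{\kT_n}\bigr) \convd \bigl(\cT_{\be},\ d_{\mathcal{T}_{\be}},\ m_{\mathcal{T}_{\be}}\bigr)
\]
with $\sigma_n^2 \convp \mathbb{V}[\txi_1]$. Substituting $N \sim n/c_1$ converts the scaling factor into
\[
	\frac{\sigma_n}{2\sqrt{N}} \sim \frac{\sqrt{\mathbb{V}[\txi_1]\, c_1}}{2\sqrt{n}} = \frac{\kappa_{tree}}{\sqrt{n}},
\]
in agreement with~\eqref{eq:kappatree}, which concludes the proof.
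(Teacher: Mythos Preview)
Your proposal is correct and follows essentially the same architecture as the paper: establish conditional uniformity of $\kT_n$ given its degree sequence, verify the regularity hypotheses of Broutin--Marckert~\cite{BM14} (size concentration, second-moment convergence, maximum degree $o(\sqrt{n})$), and conclude. The only noteworthy difference is in how the concentration estimates are obtained: rather than a two-dimensional LCLT with tilting, the paper uses the cycle lemma to rewrite $\Prb{\#_1(\cT)=\ell,\ \#_{\blambda}(\cT)=n}$ as an unconditioned probability for a sum of i.i.d.\ copies of $\btxi$, and then applies one-dimensional medium-deviation bounds (Chernoff-type inequalities from the exponential-moment assumption) together with the polynomial estimate $\Prb{\#_{\blambda}(\cT)=n}\sim c_{\blambda}n^{-3/2}$. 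This is somewhat more elementary than an LCLT and gives stretched-exponential tail bounds directly, which the paper reuses later for the height bounds; your LCLT approach would also work but would require more care to extract the uniform deviation estimates needed for~(ii) and~(iii).
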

The key idea of the proof is that, if we condition the reduced tree $\kT_n$ to have exactly $\ell$ vertices for some admissible $\ell \geq 1$, then $(\kT_n | \# \kT_n = \ell)$ is a mixture of trees with $\ell$ vertices and prescribed degree sequences, which we define as follows.

\begin{definition}[Trees with prescribed degree sequences]
Take $r \geq 1$ and let $\mathbf{k} := ((\ell_1,k_1), \ldots, (\ell_r,k_r))$ be a sequence of $r$ elements of $[K] \times \N_0^K$. We say that $\mathbf{k}$ is admissible if, for all $j \in [K]$,
\begin{align*}
	\sum_{i=1}^r k_i(j) = \sum_{i=1}^r \1_{\ell_i=j} - \1_{j=1}.
\end{align*}

A sequence $\mathbf{k}$ is admissible if and only if it is the degree sequence of a multitype tree with root type $1$. Letting $\mathbf{k}$ be an admissible sequence, we define $\cT_{\mathbf{k}}$ as the random tree uniformly distributed among all trees in $\mathbb{T}^{(K,1)}$ with degree sequence $\mathbf{k}$. In particular, $\cT_{\mathbf{k}}$ always has root type $1$.
\end{definition}

\subsection{Distribution of the reduced tree}

We recall the following result, which can be found in~\cite{Mie08}, Proposition 4(i),(iii) . 

\begin{lemma}
	\label{lem:meanexpomoments}
	$\mathbb{E}[\txi_1]=1$, $\bm{\txi}$ has finite exponential moments, and $\Prb{\txi_1=0}>0$.
\end{lemma}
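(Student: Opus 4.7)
The plan is to address the three claims in turn, each leveraging a different structural feature of $\bzeta$.

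For $\mathbb{E}[\txi_1]=1$, I would first observe that the reduced tree $\cT^{\red}$ of $\cT$ is itself a monotype \BGW tree with offspring law $\txi_1$; this follows from the Markov property applied to the i.i.d.\ blob structure and the very definition of $\btmu$. Two facts then pin down the mean. On the one hand, under our standing assumptions $\cT$ is almost surely finite, so $\cT^{\red}$ is too, which forces $\mathbb{E}[\txi_1]\le 1$ (a supercritical monotype \BGW survives with positive probability). On the other hand, the expected total size of the reduced tree is $\mathbb{E}[\#_1(\cT)]=\sum_{n\ge 0}(M^n)_{1,1}$, and Perron-Frobenius applied to the irreducible critical matrix $M$ gives $(M^n)_{1,1}\to a_1 b_1>0$, so the sum diverges; thus $\mathbb{E}[\txi_1]\ge 1$ (a subcritical monotype \BGW has finite expected total progeny). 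Combining these yields equality.

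For finite exponential moments of $\btxi$, I would decompose $B_\varnothing$ through the sub-\BGW on types $\{2,\ldots,K\}$ in which type-$1$ descendants are absorbed at their first appearance. Irreducibility and criticality of $M$ imply by a standard Perron-Frobenius argument that its restriction $M'$ to $\{2,\ldots,K\}$ has spectral radius strictly less than $1$, so this restricted process is subcritical. Its offspring laws inherit finite exponential moments from $\bzeta$, and a standard generating-function comparison near the fixed point shows that subcritical multitype \BGW trees with such offspring have total-progeny laws with finite exponential moments. Since $\|\btxi\|_1 \le |B_\varnothing|$, the exponential-moment bound transfers to $\btxi$.

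The delicate step, in my view, is $\Prb{\txi_1=0}>0$. I would argue by contradiction: if $\txi_1\ge 1$ almost surely, then combining $\mathbb{E}[\txi_1]=1$ with integrality forces $\txi_1\equiv 1$ a.s., whence $\cT^{\red}$ would be a deterministic infinite ray, contradicting the a.s.\ finiteness of $\cT^{\red}$ established above. The one place where the assumptions $\zeta^{(i)}(w_0)>0$ for some $i$ and the existence of some $\zeta^{(j)}$ putting mass on words of length $\ge 2$ enter critically is precisely in securing almost-sure finiteness of the underlying critical multitype process; this is a classical non-degeneracy input for irreducible critical multitype \BGW trees, and beyond ruling out this degenerate case the remainder of the argument is straightforward.
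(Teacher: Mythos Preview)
Your argument is correct. The paper does not prove this lemma itself but cites Miermont~\cite[Prop.~4(i),(iii)]{Mie08}. Your direct approach is essentially the one the paper later adopts for the reducible analogue, Lemma~\ref{lem:aa}: exponential moments for $\btxi$ follow from subcriticality of the process in which non-root type-$1$ vertices are made infertile (there phrased through the matrix $M^*$ obtained by zeroing the first row of $M$; your use of the principal submatrix $M'$ on $\{2,\ldots,K\}$ is equivalent since $\rho(M^*)=\rho(M')$), and $\mathbb{E}[\txi_1]=1$ together with $\Prb{\txi_1=0}>0$ follow from $\cT^{\red}$ being an a.s.\ finite monotype \BGW tree with infinite expected size. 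One small imprecision in your write-up: the convergence $(M^n)_{1,1}\to a_1b_1$ requires $M$ to be aperiodic; in the periodic case the sequence does not converge, but restricting to multiples of the period still shows $\sum_n (M^n)_{1,1}=\infty$, so the conclusion $\mathbb{E}[\#_1(\cT)]=\infty$ holds regardless.
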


Observe that our reduction operation corresponds to Miermont's projection in \cite{Mie08} and that, if the reduced tree is critical, then the flattened tree is also a critical $K$-type \bgw tree.

The key tool in the proof of Theorem \ref{thm:ConvRed} is the following proposition, which gathers estimates about the distribution of the reduced tree $\kT_n$.
Define the constant
\begin{align*}
	c_1:=\sum\limits_{i=1}^K \lambda_i \mathbb{E}\left[\tilde{\xi}_i\right]=\sum\limits_{i=2}^K \lambda_i \mathbb{E}\left[\tilde{\xi}_i\right]+\lambda_1
\end{align*}
We also denote by $N_d(T)$, for $d \geq 0$ and any tree $T$, the number of vertices in $T$ having exactly $d$ children of type $1$. In particular, $N_d(\kT_n)$ is simply the number of vertices in $\kT_n$ with outdegree $d$, and $N_d(\kT_n)=N_d(\ctT_n)$.

\begin{proposition}
	\label{prop:concentration}
	\begin{enumerate}
		\item[(i)] Fix $0<\delta <1/2$. There exist constants $A,B>0$ independent of $n$ such that, for all $n \geq 1$: 
		\begin{align*}
			\mathbb{P}\left( \left|{\#_1 (\cT_n)}-\frac{n}{c_1}\right| > n^{\frac{1}{2}+\delta} \right) \leq A \exp\left( -B n^\delta \right).
		\end{align*}
		% \lambda_1 \frac{\#_1 \cT_n}{n}\convp \frac{\lambda_1}{\sum\limits_{i=1}^k \lambda_i \mathbb{E}\left[\tilde{\xi}_i\right]}=:c   
		\item[(ii)] For each integer $d\geq 0$, we have 
		\begin{align*}
			N_d(\kT_n)/n \convp \mathbb{P}(\txi_1=d)/c_1
		\end{align*}
		\item[(iii)] There exists a constant $C>0$ such that
		\begin{align}
			\label{eq:eq3}
			\lim\limits_{n\to\infty}\Prb{\sum\limits_{d\geq C\ln(n)}N_d(\kT_n)=0}=1.
		\end{align}
		\item[(iv)] It holds that 
		\begin{align*}
			c_1 \, \sum_{d\geq 1} d^2 \frac{N_d(\kT_n)}{n} \convp \mathbb{E}[\txi_1^2].
		\end{align*}
		\item[(v)] For each $i \in [K]$ we have
		\[
			\#_i(\cT_n) / n \convp \frac{\Ex{\tilde{\xi}_i}}{c_1}.
		\]
	\end{enumerate}
\end{proposition}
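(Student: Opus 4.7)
My plan is to reduce everything to estimates on the two-dimensional random walk $(S_n^{(1)}, W_n) := \sum_{v=1}^n (\txi_1^{(v)}, Z^{(v)})$, where $(\btxi^{(v)})_{v\ge 1}$ are i.i.d.\ copies of $\btxi$ and $Z^{(v)} := \sum_{j\ge 2}\lambda_j \txi_j^{(v)}$. By Lemma~\ref{lem:meanexpomoments}, $\mathbb{E}[\txi_1] = 1$, so the increment has mean $(1, c_1-\lambda_1)$; it also has finite exponential moments, and outside trivial degenerate cases non-singular covariance. The bridge to the tree statistics is an Otter--Dwass/cycle lemma identity for the monotype reduced tree combined with the i.i.d.\ blob construction:
\begin{align*}
	\mathbb{P}\bigl(\#_1(\cT)=m,\ \#_j(\cT)=k_j\ \forall j \ge 2\bigr)
	= \frac{1}{m}\,\mathbb{P}\Bigl(S_m^{(1)} = m-1,\ \sum_{v=1}^m \txi_j^{(v)} = k_j\ \forall j \ge 2\Bigr).
\end{align*}

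For part~(i), marginalising this identity over $(k_j)_{j\ge 2}$ with $\sum_{j\ge 2}\lambda_j k_j = n - \lambda_1 m$ yields
\begin{align*}
	\mathbb{P}\bigl(\#_1(\cT)=m,\ \#_{\blambda}(\cT)=n\bigr) = \frac{1}{m}\,\mathbb{P}\bigl(S_m^{(1)} = m-1,\ W_m = n - \lambda_1 m\bigr).
\end{align*}
In the diffusive window $|m-n/c_1|\le n^{3/4}$, the two-dimensional LCLT bounds the right-hand side by $Cm^{-2}\exp(-c(m-n/c_1)^2/n)$; outside this window Cram\'er's exponential-moment bound gives exponential decay in $|m-n/c_1|$. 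Integrating the same LCLT over $m$ produces $\mathbb{P}(\#_{\blambda}(\cT) = n) = \Theta(n^{-3/2})$. Dividing the two and summing over $|m - n/c_1| > n^{1/2+\delta}$ produces the claimed bound $A\exp(-Bn^\delta)$.

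For parts~(ii)--(iv), I condition on $\#_1(\cT_n) = m$ with $m$ in the good window from~(i). Under this conditioning, the $m$ offspring vectors $(\btxi^{(v)})_{v=1}^m$ decorating the vertices of $\kT_n$ are distributed as $m$ i.i.d.\ copies of $\btxi$ subject to $S_m^{(1)} = m-1$ and $W_m = n - \lambda_1 m$, modulo a cyclic shift from the cycle lemma. A standard Gibbs-conditioning / LCLT ratio argument shows that the conditional marginal of each $\txi_1^{(v)}$ is absolutely continuous with respect to the law of $\txi_1$, with Radon--Nikodym derivative tending to $1$ uniformly on compact sets of outdegrees. A conditional weak law of large numbers then yields $N_d(\kT_n)/m \convp \mathbb{P}(\txi_1=d)$, which combined with $m/n \to 1/c_1$ from~(i) gives~(ii). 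For~(iii), the exponential moment yields $\mathbb{P}(\txi_1 \ge C\log n) \le n^{-2}$ for $C$ large, and a union bound over the $O(n)$ vertices (absorbing the $O(1)$ Radon--Nikodym factor) produces the claim. For~(iv), I split the sum $c_1 \sum_d d^2 N_d(\kT_n)/n$ at $d = C\log n$: the low part follows from~(ii) term-by-term (there are only $O(\log n)$ terms), while the high part is negligible by~(iii) combined with uniform integrability of $d^2$ under $\txi_1$, again via exponential moments.

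The main technical obstacle is the two-dimensional LCLT for $(S_m^{(1)}, W_m)$ with enough uniformity in $m$ to dovetail with the Cram\'er regime, along with the Gibbs-conditioning statement for marginals needed in~(ii). Once these are in place, the remainder is classical. A minor subtlety is that in degenerate cases where $Z$ is an affine function of $\txi_1$ (for instance $\lambda_j = 0$ for all $j \ge 2$), the two-dimensional walk collapses, but then part~(i) reduces to the classical one-dimensional Otter--Dwass asymptotics, and the other parts simplify correspondingly.
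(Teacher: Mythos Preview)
Your outline is essentially correct, and for part~(i) it matches the paper closely: the same cycle-lemma identity, followed by a deviation bound on the random walk and the polynomial asymptotic $\Prb{\#_{\blambda}(\cT)=n}=\Theta(n^{-3/2})$. The paper simply cites the latter from Stephenson rather than deriving it from the LCLT, and uses the elementary medium-deviation inequality $\Prb{|S_m - \mathbb{E}S_m|\ge x}\le 2\exp(Dm\varrho^2-\varrho x)$ in place of the full two-dimensional LCLT, but the structure is the same.

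For parts~(ii)--(iv), however, the paper takes a materially simpler route than your Gibbs-conditioning argument. Rather than controlling the conditional law of the offspring vectors given $\#_{\blambda}(\cT)=n$ via LCLT ratios, the paper just \emph{drops the conditioning and pays the polynomial price}: writing
\[
\Prb{N_d(\ctT_n)\notin J_{\delta,d}}\le o(1)+O(n^{3/2})\sum_{\ell\in I_\delta}\Prb{\sum_{j=1}^\ell \mathbf{1}_{\txi_1^{(j)}=d}\notin J_{\delta,d}},
\]
and then applying ordinary Chernoff bounds to the \emph{unconditioned} i.i.d.\ sum. Since Chernoff gives a stretched-exponential $\exp(-\Theta(n^\delta))$, it absorbs the $O(n^{3/2})$ factor effortlessly. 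The same trick handles~(iii) (union bound over $\le O(n)$ vertices against the exponential tail of $\txi_1$), and~(iv) follows by combining~(ii) uniformly over $d\le C\log n$ with~(iii), exactly as you sketch. This buys a cleaner argument with no need to analyse conditional marginals or correlations; your route would work too, but the step from ``marginal Radon--Nikodym derivative $\to 1$'' to a conditional WLLN still requires controlling second moments under the bridge, which the paper avoids entirely.
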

\begin{proof}%[Proof of Proposition \ref{prop:concentration}]
	Let us start by proving $(i)$. Let $((\txi_i^{(j)})_{i \in [K]})_{j \geq 1}$ be independent copies of $(\txi_i)_{i \in [K]}$. By the irreducibility assumption we have that $c_1>0$, since at least one of the ($\lambda_i, i\in [K]$) is positive.
	Observe that we can construct a tree distributed as the flattened tree $\ctT$ as follows: the root has type $1$ and children distributed as $(\txi_i^{(1)})_{i \in [K]}$, ordered by type. Then, independently, its $\txi_1^{(1)}$ children of type $1$ respectively have children according to $(\txi_i^{(2)})_{i \in [K]}, \ldots, (\txi_i^{(\txi_1^{(1)}+1)})_{i \in [K]}$, and so on.
	Observe that, for any $\ell \geq 1$, the tree has $\ell$ vertices of type $1$ if and only if $\sum\limits_{j=1}^{\ell}(\txi_1^{(j)}-1)=-1$ and $\sum\limits_{j=1}^m(\txi_1^{(j)}-1)\geq 0 \text{ for all } 1\leq m <\ell$. This entails:
	\begin{align}
		\label{eq:eq1}
		&\Prb{\#_1(\cT)=\ell,\#_{\blambda}(\cT)=n}  \\
		&\!=\Prb{\!\left(\sum\limits_{j=1}^{\ell}(\txi_1^{(j)}-1), \sum\limits_{j=1}^{\ell}\sum\limits_{i=2}^K \lambda_i\txi_i^{(j)}\right)\!=(-1,n- \lambda_1 \ell), \sum\limits_{j=1}^m(\txi_1^{(j)}-1)\geq 0 \text{ for all } 1\leq m <\ell\!} \nonumber\\
		&=\frac{1}{\ell}\Prb{\left(\sum\limits_{j=1}^\ell(\txi_1^{(j)}-1),\sum\limits_{j=1}^{\ell} \sum\limits_{i=2}^K \lambda_i\txi_i^{(j)}\right)=(-1,n-\lambda_1 \ell)} \nonumber
		%  &\leq \frac{1}{\ell}\Prb{\sum\limits_{j=1}^{\ell} \sum\limits_{i=1}^K \lambda_i\txi_i^{(j)}=n}
	\end{align}
	where the last equality follows from the cycle lemma, see e.g. ~\cite{Janson12}, Lemma 15.3.

	We now claim that there exist constants $A_1,B_1>0$ such that, uniformly in $\ell \geq 1$, $n \geq 1$, we have:
	\begin{align}\label{dev}
		\Prb{\left(\sum\limits_{j=1}^\ell(\txi_1^{(j)}-1),\sum\limits_{j=1}^{\ell} \sum\limits_{i=2}^K \lambda_i\txi_i^{(j)}\right)=(-1,n-\lambda_1 \ell)}\leq A_1 \exp\left(-B_1 \frac{\left|n-c_1\ell\right|}{\sqrt{\ell}}\right).
	\end{align}
	To prove it, observe that, 
by Lemma~\ref{DeviationLemma}, there exists $r,D>0$ such that, for any $\varrho \in (0,r)$, for all $\ell \geq 1$, we have
	\begin{align*}
		\Prb{\left(\sum\limits_{j=1}^\ell(\txi_1^{(j)}-1),\sum\limits_{j=1}^{\ell} \sum\limits_{i=2}^K \lambda_i\txi_i^{(j)}\right)=(-1,n-\lambda_1 \ell)}  \\ &\hspace{-3cm}\leq \Prb{\left|\sum\limits_{j=1}^{\ell}\left(\sum\limits_{i=2}^K\lambda_i\txi_i^{(j)}-c\right)\right|=\left|n-\lambda_1 \ell -c\ell \right|}\\
		&\hspace{-3cm}\leq 2 \exp\left(D \varrho^2 \ell - \varrho  |n-c_1 \ell|\right),%\\
		%		&\hspace{-3cm}= 2 \exp\left(D \varrho^2 \ell - \varrho  |n-c_1 \ell |\right),
	\end{align*}
	with $c:=\sum\limits_{i=2}^K \lambda_i \mathbb{E}\left[\tilde{\xi}_i\right]=c_1-\lambda_1$. In particular, choosing $\varrho = r \ell^{-1/2}$, we get:
	\begin{align*}
		\Prb{\#_1(\cT)=\ell,\#_{\blambda}(\cT)=n} 
		&\leq \frac{2}{\ell} \exp\left(D \varrho^2 \ell - \varrho |n-c_1 \ell|\right)  \\
		&= \frac{2}{\ell} \exp\left(D r^2 - r\frac{|n-c_1 \ell |}{\sqrt{\ell}}\right).
	\end{align*}
	This satisfies~\eqref{dev} for $A_1=2 \exp(D r^2)$ and $B_1=r$. 
	
	Note also that in case $\ell \ge 2n/c_1$ we may select $\varrho>0$ sufficiently small so that $D \varrho^2  - \varrho c_1 /2<0$, yielding
	\begin{align}
		\label{eq:doit}
				\Prb{\#_1(\cT)=\ell,\#_{\blambda}(\cT)=n} &\leq \frac{2}{\ell} \exp\left(D \varrho^2 \ell - \varrho |n-c_1 \ell|\right) \nonumber  \\
				&\le \frac{2}{\ell} \exp\left( (D \varrho^2  - \varrho c_1 /2)\ell \right) \nonumber \\
				&= \exp(- \Theta(\ell) ).
	\end{align}
	
	Now, fix $0<\delta< 1/2$ and set
	\begin{align*}
		I_\delta=\left\{\ell \, : \, \left|c_1\ell-n\right|<n^{\frac{1}{2}+\delta}\right\}.
	\end{align*}
	Observe that, by e.g.~\cite{Ste18}, Section $4.3.1$, we have the following polynomial asymptotic: 
	\begin{equation}
		\Prb{\#_{\blambda}(\cT)=n}\sim c_{\blambda}n^{-\frac{3}{2}}\label{poldecay}
	\end{equation}
	for some constant $c_{\blambda}>0$ and $\Prb{\#_{\blambda}(\cT)=n}>0$ for infinitely many $n$. Using \eqref{poldecay} along with \eqref{eq:eq1} and \eqref{dev}, we obtain
	\begin{align*}
		\Prb{\#_1(\cT_n)\notin I_\delta}&=\Prb{\#_1(\cT)\notin I_\delta|\#_{\blambda}(\cT)=n}  \\ &=\frac{1}{\Prb{\#_{\blambda}(\cT)=n}}\Prb{\#_1(\cT)\notin I_\delta, \#_{\blambda}(\cT)=n}  \\
		&\leq O(n^{3/2})\sum\limits_{\ell\notin I_\delta}\frac{1}{\ell}\exp\left(-B \frac{|n-c_1\ell|}{\sqrt{\ell}}\right).
	\end{align*}
	Therefore,
	\begin{align*}
		\Prb{\#_1(\cT_n)\notin I_\delta, \#_1(\cT_n) \le 2n/c_1}
        &\leq O(n^{3/2})\sum\limits_{\substack{\ell=1 \\ \ell\notin I_\delta}}^{2n/c_1}\frac{1}{\ell}\exp\left(-B \frac{|n-c_1\ell|}{\sqrt{\ell}}\right)\\
        &\leq O(n^{3/2})\sum\limits_{\substack{\ell=1 \\ \ell\notin I_\delta}}^{2n/c_1}\frac{1}{\ell}\exp\left(-B \frac{n^{1/2+\delta}}{\sqrt{\ell}}\right)\\
        &\leq O(n^{5/2}) \max_{1 \leq \ell \leq 2n/c_1} \exp\left( -B \frac{n^{1/2+\delta}}{\sqrt{\ell}}\right) \\
       	 &\le O(n^{5/2}) \exp\left(-B\sqrt{c_1/2} \frac{n^{1/2+\delta}}{\sqrt{n}}\right) \\
		&\le A_2 \exp\left(-B_2 n^{\delta} \right)
	\end{align*}
    	for some constants $A_2,B_2>0$. By~\eqref{eq:doit} we obtain likewise
	\begin{align}
		\label{eq:prel}
		\Prb{ \#_1(\cT_n) = \ell} \le \exp\left(-\Theta(\ell)\right)
	\end{align}
	uniformly for all $\ell > 2n/c_1$, and hence
	\begin{align}
		\label{eq:type1bound}
\Prb{ \#_1(\cT_n) \ge  2n/c_1} \le A_3 \exp(- B_3 n)
	\end{align}
	for some constants $A_3, B_3>0$. This proves (i).
	In order to prove (ii), for all $d \geq 0$, let $p_d := \Prb{\txi_1=d}$ and define the interval
	\begin{align*}
		J_{\delta,d}=\left[p_d\frac{n}{c_1}-n^{\frac{1}{2}+2\delta}, p_d\frac{n}{c_1}+n^{\frac{1}{2}+2\delta}\right].     
	\end{align*}
    
	Fix $d \geq 0$ and recall that $N_d(\kT_n)=N_d(\ctT_n)$. We want to show that, with high probability, $N_d(\ctT_n) \in J_{\delta,d}$. If $p_d=0$ then $N_d(\ctT_n)=0$ a.s.\ and our claim holds. Hence we may assume in the following that $p_d>0$.        
	We also consider only $n$ such that $\Prb{\#_{\blambda}\cT=n}>0$.
	Using \eqref{poldecay} and Proposition~\ref{prop:concentration} (i), we get that, as $n \to \infty$:
	\begin{align*}
		\Prb{N_d(\ctT_n)\notin J_{\delta,d}} &\leq \Prb{\#_1(\cT_n) \notin I_\delta} + \sum\limits_{\ell\in I_\delta}\Prb{N_d(\ctT_n)\notin J_{\delta,d},\#_1(\cT_n)=\ell}\\
		&= o(1)+\sum\limits_{\ell\in I_\delta}\Prb{N_d(\ctT_n)\notin J_{\delta,d},\#_1(\cT_n)=\ell}\\
		&=o(1)+\sum\limits_{\ell\in I_\delta}\frac{\Prb{N_d(\ctT)\notin J_{\delta,d}, \#_1(\cT)=\ell,\sum\limits_{i=2}^{K}\lambda_i \#_i (\cT)=n-\lambda_1 \ell}}{\Prb{\#_{\blambda} (\cT)=n}}\\
		&\leq o(1)+O(n^{3/2})\sum\limits_{\ell\in I_\delta}\Prb{\sum\limits_{j=1}^{\ell} 1_{\txi_1^{(j)}=d}\notin J_{\delta,d}, \sum\limits_{j=1}^{\ell}\sum\limits_{i=2}^{K}\lambda_i \txi_i^{(j)}=n-\lambda_1 \ell}
	\end{align*}
	Here $\ctT$ denotes the flattened tree of the unconditioned \bgw tree $\cT$. Fix $0<\delta<1/4$. We may assume that $n$ is large enough such that $n^{\delta}-1>1.$
    We can use a binomial Chernoff bound to conclude that for $\ell$ satisfying that $|\ell-\frac{n}{c_1}|<n^{1/2+\delta}$
\begin{align*}
    \Prb{\left| \sum\limits_{j=1}^{\ell}1_{\tilde{\xi}_1^{(j)}=d}-  \ell p_d \right|\geq n^{-1/2+\delta/2}\ell p_d}&\leq 2\exp\left(-\frac{1}{4}n^{-1+\delta}\ell p_d\right)\\
    &\leq 2\exp\left(-\frac{p_d}{4}n^{-1+\delta}\left(\frac{n}{c_1}-n^{1/2+\delta}\right) \right)\\
    &\leq \exp\left(-\Theta(n^{\delta})\right).    
    %\leq \exp\left(-\frac{p_d}{4}n^{-1+\delta}\left(\frac{n}{c_1}-n^{1/2+\delta}\right) \right)\\
    %\leq \exp\left(-\frac{p_d}{4}\left(\frac{n^{\delta}}{c_1}-n^{-1/2+2\delta}\right) \right)
\end{align*}
Now
	\begin{align*}
		\Prb{ \left| \sum\limits_{j=1}^{\ell}1_{\tilde{\xi}_1^{(j)}=d}-  \frac{n}{c_1}p_d \right| \ge n^{1/2+2\delta}}&\leq \Prb{\left| \sum\limits_{j=1}^{\ell}1_{\tilde{\xi}_1^{(j)}=d}-  \ell p_d \right|+\left|\ell p_d-\frac{n}{c_1}p_d\right|\geq n^{1/2+2\delta}}\\
        &\leq \Prb{\left| \sum\limits_{j=1}^{\ell}1_{\tilde{\xi}_1^{(j)}=d}-  \ell p_d \right|\geq n^{1/2+2\delta}- \left|\ell p_d-\frac{n}{c_1}p_d\right|}\\
        &\leq \Prb{\left| \sum\limits_{j=1}^{\ell}1_{\tilde{\xi}_1^{(j)}=d}-  \ell p_d \right|\geq n^{1/2+2\delta}- n^{1/2+\delta}}\\
        &\leq  \Prb{\left| \sum\limits_{j=1}^{\ell}1_{\tilde{\xi}_1^{(j)}=d}-  \ell p_d \right|\geq n^{1/2+\delta}}.
	\end{align*}
Note that for such $\ell$ we have $n^{-1/2+\delta/2}\ell\leq \frac{c_1+1}{c_1}n^{1/2+\delta}$. Therefore also
\begin{align*}
     \Prb{ \left| \sum\limits_{j=1}^{\ell}1_{\tilde{\xi}_1^{(j)}=d}-  \frac{n}{c_1}p_d \right| \ge n^{1/2+2\delta}}&\leq\exp\left(-\Theta(n^{\delta})\right)
\end{align*}

	So we have that, for all $d\geq 0$ and all integers $\ell$ satisfying $|c_1\ell-n|<n^{1/2+\delta}$, 
	\begin{align*}
		\Prb{\sum\limits_{j=1}^{\ell}1_{\txi_1^{(j)}=d} \notin J_{\delta,d}}\leq \exp\left(-\Theta(n^{\delta})\right)\, .
	\end{align*}
	Therefore it follows that, for all $d \geq 0$:
	\begin{align}
		\label{eq:chernoff}
		\Prb{N_d(\ctT_n)\notin J_{\delta,d}}\leq \exp(-\Theta(n^{\delta/2})).
	\end{align}
	This gives us that
	\begin{align*}
		\frac{N_d(\ctT_n)}{n}\convp \frac{p_d}{c_1},
	\end{align*}
	which proves (ii) (recall $N_d(\kT_n)=N_d(\ctT_n)$).
	
	Let us now prove (iii). For $d_0\geq 0$ and for a tree $T$, let $E_{d_0}(T)$ be the event that $N_d(T)>0$ for some $d\geq d_0$. Uniformly for $d_0 \geq 1$, we have:
	\begin{align*}
	& \Prb{E_{d_0}(\ctT_n)} \\ 
        &\leq \Prb{\#_1(\cT_n)\notin I_\delta}+\sum\limits_{\ell\in I_\delta}\Prb{E_{d_0}(\ctT_n),\#_1(\cT_n)=\ell}\\
		&=o(1)+ \frac{1}{\Prb{\#_{\blambda}(\ctT)=n}} \sum\limits_{\ell\in I_\delta} \Prb{E_{d_0}(\ctT), \#_1(\cT)=\ell,\sum\limits_{i=2}^{K}\lambda_i \#_i (\cT)=n-\lambda_1 \ell}\\
		&\leq o(1)+O(n^{3/2})\sum\limits_{\ell\in I_\delta}\Prb{\sum\limits_{j=1}^{\ell}1_{\txi_1^{(j)}\geq d_0}>0, \sum\limits_{j=1}^{\ell}\sum\limits_{i=2}^{K}\lambda_i \txi_i^{(j)}=n-\lambda_1 \ell},
	\end{align*}
	using \eqref{poldecay}. Thus
	\begin{align*}
		\Prb{E_{d_0}(\ctT_n)}\leq o(1)+O(n^{3/2})\sum\limits_{\ell\in I_\delta}\Prb{\sum\limits_{j=1}^{\ell}1_{\txi_1^{(j)}\geq d_0}>0}\\
		\leq o(1)+O(n^{7/2})\Prb{\txi_1\geq d_0}
	\end{align*}
	by a union bound, since $\sup I_\delta = \Theta(n)$. Now, fix $\varepsilon>0$ such that $\mathbb E[e^{\varepsilon \txi_1}]<\infty$ (which exists since $\txi_1$ has finite exponential moments, see Lemma \ref{lem:meanexpomoments}). In particular, for $d$ large enough, we have $\Prb{\txi_1 \geq d} \leq e^{-\varepsilon d}$. We can thus choose $C>0$ such that $C \varepsilon> 7/2$, we get that
	\begin{align*}
		\Prb{N_d(\ctT_n)>0 \text{ for some } d\geq C \ln(n)} = o(1),
	\end{align*}
	which proves (iii).
	
	Let us finally prove (iv). Fix $C>0$ such that \eqref{eq:eq3} holds, and let $E_n$ be the event that 
	\begin{align*}
		\forall d\geq C \ln(n), N_d(\ctT_n)=0.
	\end{align*}
	Then, $\Prb{E_n} \rightarrow 1$ by (iii).
	Furthermore, under $E_n$, we have
	\begin{align*}
		c_1 \sum\limits_{d\geq1}d^2\frac{N_d(\ctT_n)}{n}=c_1\sum\limits_{d=1}^{C \ln (n)}d^2 \frac{N_d(\ctT_n)}{n}.
	\end{align*}
	By \eqref{eq:chernoff}, we obtain that with high probability, for all $1 \leq d \leq C\ln (n), N_d(\ctT_n)\in J_{\delta,d}$. 
	As a consequence, with high probability, under $E_n$, 
	\begin{align*}
		c_1 \sum\limits_{d=1}^{C \ln (n)}d^2 \frac{N_d(\ctT_n)}{n}=\sum\limits_{d=1}^{C \ln (n)}d^2\frac{c_1}{n}\left(\Prb{\txi_1=d}\frac{n}{c_1}+O(n^{\frac{1}{2}+2\delta})\right)\\
		=\sum\limits_{d=1}^{C \ln (n)}d^2\Prb{\txi_1=d}+\sum\limits_{d=1}^{C \ln (n)}d^2O\left(n^{2\delta-\frac{1}{2}}\right)\\
		= o(1)+\mathbb{E}[\txi_1^2]+O\left(n^{2\delta-\frac{1}{2}} \ln^3 (n)\right)
	\end{align*}
	where, in the first two lines, the $O$ is uniform in $d \leq C \ln(n)$. Taking $\delta>0$ small enough, we thus get
	\begin{align*}
		c_1 \sum_{d\geq 1} d^2 \frac{N_d(\ctT_n)}{n} =\mathbb{E}[\txi_1^2]+o(1).
	\end{align*}
	This ends the proof of (iv).
	
	Finally, we verify (v). The case $i=1$ is exactly (i), since
	$\mathbb{E}[\txi_1]=1$ by Lemma~\ref{lem:meanexpomoments}. Fix now
	$i\in\{2,\ldots,K\}$. Since flattening
	preserves the number of vertices of each type, it is enough to prove the
	claim for $\ctT_n$.
	Let $v_1, \ldots, v_{\#_1(\ctT_n)}$ be the vertices of type $1$ of $\ctT_n$ in depth-first-search order.  For each $1 \leq j \leq \#_1(\ctT_n)$, let $Y_j$ denote the number of type $i$ children of $v_j$ in $\ctT_n$.
	
	Fix $0<\delta<1/4$. For each $\epsilon>0$  let $\mathcal{E}$ denote the  event that \[
	\left|\sum_{j=1}^{\#_1(\ctT_n)} Y_j - n \Ex{\tilde{\xi}_i}/c_1  \right| \ge \epsilon n.
	\]
	Let $\mathcal{E}'$ denote the corresponding event for the unconditioned tree $\ctT$. Using \eqref{poldecay} we obtain
	\begin{align*}
		\Prb{\mathcal{E},\#_1(\ctT_n)\in I_\delta} &=\frac{\Prb{\mathcal{E}',\#_1(\ctT)\in I_\delta, \#_{\blambda}\ctT=n}}{\Prb{\#_{\blambda}(\ctT)=n}}\\
		&= O(n^{\frac{3}{2}}) \Prb{\mathcal{E}',\#_1(\ctT)\in I_\delta, \#_{\blambda}(\ctT)=n}\\
		&\leq O(n^{\frac{3}{2}})\sum_{\ell\in I_\delta} \Prb{ \left| \sum\limits_{j=1}^{\ell} \txi^{(j)}_i -n \mathbb{E}[\txi_i]/c_1\right| >\epsilon n} \\
		&= o(1) \text{ by Lemma } \ref{DeviationLemma}.
	\end{align*}
	By $(i)$ it follows that the probability for $\mathcal{E}$ tends to zero as $n \to \infty$. As $\epsilon>0$ was arbitrary, this completes the proof.
 \end{proof}

\subsection{Proof of Theorem~\ref{thm:ConvRed}}

We are now ready to prove Theorem~\ref{thm:ConvRed}. To this end, we start with the following proposition, which will also be useful later on. Given a plane tree $t$, recall the definitions of $C_{t}$ and $H_{t}$ from Definition \ref{def:contour height}. We define their rescaled versions $\hat{C}_{t}$ and $\hat{H}_{t}: [0,1] \rightarrow \R$ as: for all $x \in [0,1]$,
\begin{align*}
\hat{C}_{t}(x)=C_t((2|t|-2)x), \hat{H}_{t}(x)=H_t((|t|-1)x).
\end{align*}

Recall also that we denote by $\be$ a standard Brownian excursion.

\begin{proposition}
\label{prop:jointconvergence}
In the space $\mathcal{C}([0,1],\mathbb{R}^2)$ equipped with the $L_\infty$ distance, we have the following joint convergence in distribution:
\begin{align*}
		\left(\frac{\sqrt{c_1}\hat{C}_{\kT_n}(x)}{\sqrt{n}},\frac{\sqrt{c_1}\hat{H}_{\kT_n}(x)}{\sqrt{n}}\right)_{x \in [0,1]} \convd \left(\frac{2}{\sqrt{\mathbb{V}[\txi_1]}}\be_x,\frac{2}{\sqrt{\mathbb{V}[\txi_1]}}\be_x\right)_{x \in [0,1]}.
	\end{align*}
\end{proposition}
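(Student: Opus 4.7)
The plan is to reduce to the Broutin–Marckert scaling-limit result~\cite{BM14} for random plane trees with a prescribed degree sequence. The essential observation is that, because the flattening operation turns $\cT$ into a multitype Bienaymé tree with offspring distribution $\btzeta$, and type-$1$ vertices in the flattened tree have $\txi_1$ type-$1$ children, the reduced tree $\cT^{\red}$ is a \emph{monotype} Bienaymé tree with offspring law $\txi_1$. Moreover, conditionally on its full degree sequence, $\kT_n$ is uniformly distributed among monotype plane trees with that degree sequence: this is inherited from the fact that, given the shape of $\cT^{\red}$, the blobs hanging at each type-$1$ vertex are independent with a law depending only on the vertex's outdegree in the reduced tree, so that the conditioning event $\{\#_{\blambda}(\cT)=n\}$ biases the reduced tree only through its degree sequence.

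The key step is then to verify that the hypotheses of~\cite{BM14} are satisfied with probability tending to $1$ by this random degree sequence. Writing $\ell_n := \#_1(\kT_n)$, Proposition~\ref{prop:concentration}(i) gives $|\ell_n - n/c_1| \leq n^{1/2+\delta}$ with probability $1-o(1)$; item (iii) controls the maximum outdegree by $C\ln n$; item (ii) yields $N_d(\kT_n)/\ell_n \convp \Prb{\txi_1 = d}$ for each fixed $d$; and item (iv) yields $\ell_n^{-1} \sum_{d\geq 1} d^2 N_d(\kT_n) \convp \mathbb{E}[\txi_1^2]$, so that the empirical variance of the outdegrees converges in probability to $\mathbb{V}[\txi_1] > 0$ (using $\mathbb{E}[\txi_1]=1$ from Lemma~\ref{lem:meanexpomoments}). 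These are exactly the kind of estimates required to apply the results of~\cite{BM14}.

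I would then invoke the joint contour/height scaling limit of~\cite{BM14} for uniform plane trees with the degree sequence of $\kT_n$, conditionally on that sequence, yielding
\begin{align*}
\left( \frac{\sqrt{\mathbb{V}[\txi_1]}}{2\sqrt{\ell_n}} C_{\kT_n}\bigl(2(\ell_n-1) x\bigr),\; \frac{\sqrt{\mathbb{V}[\txi_1]}}{2\sqrt{\ell_n}} H_{\kT_n}\bigl((\ell_n-1) x\bigr) \right)_{x \in [0,1]} \convd (\be_x, \be_x)_{x \in [0,1]}
\end{align*}
in $\mathcal{C}([0,1], \mathbb{R}^2)$ with the $L_\infty$ topology. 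Since $\ell_n/n \convp 1/c_1$ and the limit process $\be$ is uniformly continuous on $[0,1]$, a Slutsky-type argument lets me replace $\sqrt{\ell_n}$ by $\sqrt{n/c_1}$ in the normalization and reparametrize the time variable, producing the prefactor $\sqrt{c_1}/\sqrt{n}$ on the left-hand side and $2/\sqrt{\mathbb{V}[\txi_1]}$ on the right-hand side as stated.

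The main obstacle is that~\cite{BM14} is formulated for deterministic degree sequences, whereas here the degree sequence of $\kT_n$ is random. This is handled by working on the high-probability ``good event'' determined by the concentration estimates above and showing that for any convergent subsequence of deterministic degree sequences lying in that regime, the scaling limit is the same; the conditional distribution of $\kT_n$ given its degree sequence is then a mixture of the laws treated in~\cite{BM14}, and the convergence transfers by dominated convergence. A secondary technical point is to deduce \emph{joint} rather than marginal convergence of the contour and height functions from~\cite{BM14}; this is mild, since on the good event the deterministic time-change relating them has bounded distortion (the maximum outdegree being $O(\ln n) = o(\sqrt{\ell_n})$), so both converge to the same Brownian excursion.
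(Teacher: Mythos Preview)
Your approach is essentially the same as the paper's: use Proposition~\ref{prop:concentration} to verify the hypotheses of~\cite[Theorem 3]{BM14} for the (random) degree sequence of $\kT_n$, apply that result, and then replace $\sqrt{|\kT_n|}$ by $\sqrt{n/c_1}$. Two small remarks: the paper handles the randomness of the degree sequence cleanly via Skorokhod's representation theorem rather than a good-event/subsequence argument, and~\cite[Theorem 3]{BM14} already yields the \emph{joint} convergence of the contour and height processes, so your final paragraph's concern is unnecessary.
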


Theorem \ref{thm:ConvRed} actually follows from the convergence of the contour function alone. The joint convergence result will be useful in Section \ref{sec:cvflattened}.

\begin{proof}[Proof of Theorem~\ref{thm:ConvRed}]
The rooted Gromov--Hausdorff--Prokhorov convergence of $\kT_n$ follows from the convergence of the rescaled contour function $\hat{C}_{\kT_n}$ in Proposition \ref{prop:jointconvergence}, see e.g. \cite{ADH14}, Proposition 2.10.
\end{proof}

Let us now prove Proposition \ref{prop:jointconvergence}.

\begin{proof}[Proof of Proposition~\ref{prop:jointconvergence}]
For all $n \geq 1$, define
\begin{align*}
	&\tilde{\nu}_n: d \geq 0 \mapsto \frac{N_d(\kT_n)}{|\kT_n|}\\
	&\tilde{\sigma}_n^2:=\sum_{d\geq 0}d^2\frac{N_d(\kT_n)}{|\kT_n|-1}-1\\
	&\Delta_n:=\max\{d:N_d(\kT_n)>0\}.
\end{align*}
We adapt the ideas of~\cite{BM14}, Theorem 3. First observe that, for any  given integer $\ell$ with $\Prb{ \#_1 (\cT_n) = \ell}>0$, the tree $(\kT_n | \#_1 (\cT_n) = \ell)$ is distributed like a mixture of trees with prescribed degree sequences. Indeed, if we condition the flattened tree $(\ctT_n | \#_1 (\cT_n) = \ell)$ on having a given ($K$-dimensional) profile $\left( N_{a_1, \ldots, a_K}\right)_{a_1, \ldots, a_K \geq 0}$ (that is, the tree having exactly $N_{a_1, \ldots, a_K}$ vertices with $a_i$ offspring of type $i$ for all $1 \leq i \leq K$), then the tree $(\kT_n | \#_1 (\cT_n)~=~\ell)$ is uniform among all trees with the associated profile $(\sum_{a_2, \ldots, a_K \geq 0}N_{a_1, \ldots, a_K})_{a_1 \geq 0}$ (see e.g. \cite{CS23}, Proof of Proposition $11$).

Next, observe that in Proposition ~\ref{prop:concentration} (ii) and (iv) we use $\frac{n}{c_1}$ instead of $|\kT_n|~=~\#_1\ctT_n$ for the scaling, but since (i) yields concentration, we can instead scale by $1/|\kT_n|$ and obtain the same limit. Hence, we get the following limit in distribution as $n\to \infty$:
\begin{equation}
	\label{eq:productconvergence}
	(\tilde{\nu}_n, \tilde{\sigma}_n^2, \frac{\Delta_n}{\sqrt{n}})\convd (\tmu_1,\mathbb{V}[\txi_1],0),
\end{equation}
on the (Polish) space $\mathcal{M}(\mathbb{N}_0)\times \mathbb{R}\times \mathbb{R}$ endowed with the product topology, where $\mathcal{M}(\mathbb{N}_0)$ is the space of probability measures on $\mathbb{N}_0$.

Now by Skorohod's representation theorem we may work in a space in which $ (\tilde{\nu}_n, \tilde{\sigma}_n^2, \frac{\Delta_n}{\sqrt{n}})$ converges almost surely to $(\tmu_1,\mathbb{V}[\txi_1],0)$. Therefore, almost surely the conditions of ~\cite{BM14}, Theorem 3 are satisfied, yielding
\begin{align}
	\label{eq:convergencecontour1}
	\left( \frac{\hat{C}_{\kT_n}(x)}{\sqrt{|\kT_n|}}, \frac{\hat{H}_{\kT_n}(x)}{\sqrt{|\kT_n|}}\right)_{x\in[0,1]} \convd \left(\frac{2}{\sqrt{\mathbb{V}[\txi_1]}}\be_x,\frac{2}{\sqrt{\mathbb{V}[\txi_1]}}\be_x\right)_{x\in[0,1]}.
\end{align}  
Here the convergence holds in the space $\mathcal{C}([0,1],\mathbb{R}^2)$ equipped with the $L_\infty$ distance.

%We now, for $x\in[0,1]$, compare it to
% \begin{align*}
	%  \left(\frac{\sqrt{c_1}\hat{C}_{\kT_n}(x)}{2\sqrt{n}}\right).
	%\end{align*}
	 Now fix $\delta \in (0,1/2)$. Using Proposition \ref{prop:concentration} (i), with high probability as $n \rightarrow \infty$, we have:
     \begin{align*}
		\left|\frac{\sqrt{c_1}}{\sqrt{n}}-\frac{1}{\sqrt{|\kT_n|}}\right| &\leq \left|\frac{c_1 \cdot n^{1/2+\delta}}{\sqrt{|\kT_n|}\cdot\sqrt{n}(\sqrt{c_1|\kT_n|}+\sqrt{n})}\right| \nonumber \\
		&\leq \frac{c_1 \cdot n^{-1/2+\delta} }{\sqrt{|\kT_n|}}
	\end{align*}

This immediately yields
    
	\begin{align*}
		\sup_{x \in [0,1]} \left|\frac{\sqrt{c_1}\hat{C}_{\kT_n}(x)}{\sqrt{n}}-\frac{\hat{C}_{\kT_n}(x)}{\sqrt{|\kT_n|}}\right| \leq c_1 \cdot n^{-1/2+\delta} \sup_{x \in [0,1]}  \frac{\hat{C}_{\kT_n}(x)}{\sqrt{|\kT_n|}}.
	\end{align*}
	
	Using \eqref{eq:convergencecontour1}, it follows that the left-hand side goes to $0$ in probability, since $\delta<1/2$. The same holds \textit{verbatim} for $\hat{H}_{\kT_n}$. Hence, the joint convergence of Proposition \ref{prop:jointconvergence} follows from \eqref{eq:convergencecontour1}.
\end{proof}

We may also derive a tail bound for the height, which is defined by $\mathrm{H}(T):=\max_{v\in T}h_T(v)$ for any tree $T$. 

\begin{proposition}
	\label{pro:goup}
	There exist constants $C,c>0$ such that for all $n$ and $x>0$
	\[
	\Prb{\mathrm{H}(\ctT_n)\geq x}\leq C\exp(-cx^2/n) + C\exp(-cx).
	\]
\end{proposition}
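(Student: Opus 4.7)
The plan is to reduce the statement to a tail bound on the height of the reduced tree $\kT_n$, and then exploit the fact that $\kT_n$ is, conditionally, a mixture of random plane trees with prescribed degree sequences, for which one can invoke the height estimates of~\cite{MR4815972}.

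First, since every vertex of type different from $1$ in $\ctT_n$ is a leaf whose parent lies in $\kT_n$, we have $\mathrm{H}(\ctT_n) \leq \mathrm{H}(\kT_n) + 1$, reducing the problem to bounding $\mathrm{H}(\kT_n)$ (up to adjusting the constants). The assumption $\lambda_1 \neq 0$ furthermore guarantees that $|\kT_n| = \#_1(\cT_n) \leq n/\lambda_1 \leq n$, which is essential for trading $1/\ell$ for $1/n$ in the bounds below.

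Conditionally on $\#_1(\cT_n) = \ell$ and on the full $K$-dimensional profile of $\ctT_n$, the reduced tree $\kT_n$ is uniformly distributed among plane trees on $\ell$ vertices with the monotype degree sequence obtained by summing offspring multiplicities across non-root types (the same combinatorial identity used in~\cite{CS23}). The analysis then splits according to whether the realized profile is \emph{typical}, meaning the maximal outdegree is at most $C \log n$ and the empirical variance lies close to $\mathbb{V}[\txi_1]$. By Proposition~\ref{prop:concentration}(ii)--(iv) together with the Chernoff-type estimate~\eqref{eq:chernoff}, a typical profile fails with probability at most $C \exp(-c n^\delta)$ for some $\delta > 0$; combined with the concentration of $\ell$ around $n/c_1$ from Proposition~\ref{prop:concentration}(i), this atypical contribution is dominated by either term on the right-hand side. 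On typical profiles, the main tail estimate of~\cite{MR4815972} yields
\[
\Prb{\mathrm{H}(\kT_n) \geq x \mid \#_1(\cT_n) = \ell,\ \text{profile}} \leq C\exp(-cx^2/\ell) \leq C\exp(-cx^2/n)
\]
in the range $\ell \leq 2n/c_1$.

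The complementary regime $\ell \geq 2n/c_1$ is handled via the trivial monotonicity $\Prb{\mathrm{H}(\kT_n) \geq x} \leq \Prb{\#_1(\cT_n) \geq x}$ together with the exponential tail~\eqref{eq:prel}: this gives $C\exp(-cx)$ for $x \geq 2n/c_1$, and an extra $O(\exp(-\Theta(n)))$ contribution for smaller $x$, both absorbed by the right-hand side of the claim. The step I expect to be the most delicate is verifying that the hypotheses of~\cite{MR4815972} apply uniformly across all the (random) profiles arising with high probability---in particular, accommodating a maximal degree that is only $O(\log n)$ rather than bounded, and ensuring that the resulting constants do not depend on $n$ or on the realized profile.
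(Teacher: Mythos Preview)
Your reduction to $\kT_n$ and the conditional identification with a mixture of trees with prescribed degree sequence are both correct and match the paper's approach. The gap is in how you dispose of the atypical profiles. You bound the probability of an atypical profile by $C\exp(-cn^\delta)$, a quantity depending on $n$ alone, and claim it is dominated by the right-hand side. It is not: for $x=n^{(1+\delta')/2}$ with any $\delta'>\delta$, both $\exp(-cx^2/n)=\exp(-cn^{\delta'})$ and $\exp(-cx)$ are far smaller than $\exp(-cn^{\delta})$, so the atypical contribution swamps the target bound in this regime. (The max-degree part of your ``typical'' event is even worse: the proof of Proposition~\ref{prop:concentration}(iii) only yields polynomial decay for $\{\Delta_n>C\log n\}$, not stretched-exponential.) The step you flag as delicate---uniformity of the constants in~\cite{MR4815972}---is actually the easy one; the real difficulty is getting an $x$-dependent bound on the bad event.

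The paper circumvents this in two moves. First, it conditions only on the single statistic $N_1(\kT_n)<(1+\epsilon)p_1\ell$ with $(1+\epsilon)p_1<1$; this alone is the hypothesis required for~\cite[Theorem~6]{MR4815972}, and the resulting constants depend only on $(1+\epsilon)p_1$, not on the maximal degree or empirical variance. Second, and crucially, it intersects the complementary event with $\{\mathrm{H}(\kT_n)\ge x\}$ \emph{before} bounding it: since $\mathrm{H}(\kT_n)<|\kT_n|=\ell$, this forces $\ell\ge x$, and a direct Chernoff bound on a $\mathrm{Bin}(\ell,p_1)$ variable gives (after absorbing the $O(n^{3/2})$ from de-conditioning, using $x\ge\sqrt{n}$) a bound of $\exp(-\Theta(\ell))\le\exp(-\Theta(x))$. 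It is this conversion of the $\ell$-dependent Chernoff bound into an $x$-dependent one, via the implication $\mathrm{H}\ge x\Rightarrow\ell\ge x$, that makes the atypical contribution land inside $C\exp(-cx)$. Your argument bounds the atypical probability without this intersection and so never gains access to the mechanism.
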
	
\begin{proof}
	The distance of any vertex of $\ctT_n$ to the subtree $\kT_n$ is at most $1$. By possibly adjusting $C,c>0$ it hence suffices to show the tail bounds for the height of $\kT_n$. Furthermore, again by possibly adjusting $C,c>0$, it suffices to verify these bounds for
	\begin{align}
		\label{eq:xas}
		x \ge \sqrt{n}.
	\end{align}

	Set $p_1 := \Prb{\tilde{\xi}_1 = 1}$. We may assume $p_1>0$, since otherwise $\Prb{N_1(\kT_n)>0}=0$.
	For any  given integer $\ell$ with $\Prb{ \#_1 (\cT_n) = \ell}>0$ and any $\epsilon>0$ we may argue analogously as for~\eqref{eq:chernoff} using the Chernoff bounds to obtain
	\begin{align*}
		&\Prb{N_1(\kT_n) \ge (1+ \epsilon) p_1 \ell, | \kT_n|= \ell, \mathrm{H}(\kT_n)\geq x} \\
		&\le \1_{x \le \ell} \Prb{N_1(\kT_n) \ge (1+ \epsilon) p_1 \ell, | \kT_n|= \ell} \\
		&= \1_{x \le \ell}	\Prb{\#_{\blambda} (\cT)=n}^{-1}	\Prb{N_1(\kT) \ge (1+ \epsilon)p_1\ell, | \kT|= \ell,\#_{\blambda} (\cT)=n } \\
		&\le \1_{x \le \ell}	O(n^{3/2})	\Prb{N_1(\kT) \ge (1+ \epsilon) p_1 \ell, | \kT|= \ell} \\
		&\le \1_{x \le \ell}	O(n^{3/2})	\Prb{\mathrm{Bin}(\ell,p_1) \ge (1+ \epsilon) p_1 \ell} \\
		&\le \1_{x \le \ell}	O(n^{3/2}) \exp(- \epsilon^2 \ell p_1 / (2+\epsilon)) \\
		&\le O(n^{3/2}) \exp(- \epsilon^2 x p_1 / (2+\epsilon)).
	\end{align*}
	By~\eqref{eq:xas}, this simplifies to
	\begin{align}
		\label{eq:onebound}
		\Prb{N_1(\kT_n) \ge (1+ \epsilon) p_1 \ell, | \kT_n|= \ell, \mathrm{H}(\kT_n)\geq x} &\le O(1) \exp\left(- \frac{\epsilon^2 x p_1(1+o(1))}{ 2+\epsilon}\right)  \\
		&\le \exp(- \Theta(x) ). \nonumber
	\end{align}

	As we argued in the proof of Theorem~\ref{thm:ConvRed}, for any  given integer $\ell$ with $\Prb{ \#_1 (\cT_n) = \ell}>0$, the tree $(\kT_n \,\,|\,\, | \kT_n| = \ell)$ is distributed like a mixture of random trees with prescribed degree sequences. Likewise, $(\kT_n \,\,|\,\, |\kT_n| = \ell, N_1(\kT_n) < (1+ \epsilon) p_1 \ell)$ is distributed like a mixture of random trees with prescribed degree sequences with $\ell$ vertices such that the number of vertices with outdegree $1$ is  less than $(1+ \epsilon) p_1 \ell$.
	
	Hence, taking $\epsilon>0$ small enough so that $(1+\epsilon)p_1 < 1$, we may apply~\cite{MR4815972}, Theorem 6, yielding that there exist constants $C',c'>0$  that depend on $(1+\epsilon)p_1$, but not on $\ell$, $n$ or $x$ such that 
	\begin{align}
		\label{eq:AA}
		\Prb{\mathrm{H}(\kT_n) \geq x \mid \left| \kT_n\right|=\ell, N_1(\kT_n) < (1+ \epsilon) p_1 \ell}\leq C'\exp(-c'x^2/\ell).
		\end{align}
	Therefore,
	\begin{align*}
		\Prb{\mathrm{H}(\kT_n) \geq x  }  
		= A + B,
	\end{align*}
	with
	\begin{align*}
		A &:= \sum_{1 \le \ell \le 2n/c_1} \Prb{ \mathrm{H}(\kT_n) \geq x, |\kT_n|=\ell }  \\
		&\le (2n/c_1) \exp(- \Theta(x) ) + \sum_{1 \le \ell \le 2n/c_1} \Prb{\mathrm{H}(\kT_n) \geq x ,|\kT_n|=\ell, N_1(\kT_n) < (1+ \epsilon) p_1 \ell } \\
		&\le \exp(- \Theta(x) )  + \sum_{1 \le \ell \le 2n/c_1} \Prb{|\kT_n|=\ell, N_1(\kT_n) < (1+ \epsilon) p_1 \ell} C' \exp\left(- c' x^2 / \ell \right) \\
		&\le \exp(- \Theta(x) ) + C' \exp(-(c' c_1 / 2) x^2 / n)
	\end{align*}
	by Inequalities~\eqref{eq:onebound},~\eqref{eq:xas} and ~\eqref{eq:AA} (applied in that order),
	and
	\begin{align*}
		B &:= \sum_{\ell > 2n/c_1} \1_{x\le \ell} \Prb{|\kT_n|=\ell} \Prb{\mathrm{H}(\kT_n) \geq x \,\mid\, |\kT_n|=\ell }  \\
		&\le \sum_{\ell > 2n/c_1} \1_{x\le \ell} \Prb{|\kT_n|=\ell} \\
		&\le \sum_{\ell > 2n/c_1} \1_{x\le \ell} \exp(-\Theta(\ell)) \\
		&\le \exp(-\Theta(x))
	\end{align*}
	by~\eqref{eq:prel}. The indicator variable $\1_{x\le \ell}$ appears as a factor since by construction $\mathrm{H}(\kT_n) \le |\kT_n|$.
	This completes the proof.
\end{proof}

\section{Convergence of the flattened tree}
\label{sec:cvflattened}

The goal of this section is to prove the convergence of the flattened tree $\ctT_n$, from the convergence of the reduced tree $\kT_n$ shown in the previous section.

\begin{proposition}\label{prop:ConvFlattened}
	Let $\kappa_{tree}$ be the constant defined in \eqref{eq:kappatree}. Then, the following convergence holds in distribution, for the rooted Gromov--Hausdorff--Prokhorov topology.
	\begin{align*}
		(\ctT_n,\kappa_{tree}n^{-\frac{1}{2}}d_{\ctT_n},m_{\ctT_n})\convd (\mathcal{T}_{\be},d_{\mathcal{T}_{\be}},m_{\mathcal{T}_{\be}})
	\end{align*}
\end{proposition}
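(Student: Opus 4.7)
The key geometric observation is that $\kT_n$ sits isometrically inside $\ctT_n$, and every non-type-1 vertex of $\ctT_n$ is a leaf at graph distance $1$ from its type-1 parent in $\ctT_n$. Hence the Hausdorff distance in $\ctT_n$ between $\kT_n$ and $\ctT_n$ is at most $1$, which becomes $o(1)$ after rescaling by $\kappa_{tree}/\sqrt{n}$. Combined with Theorem~\ref{thm:ConvRed}, this already gives Gromov--Hausdorff convergence; the substantive remaining task is to show that the mass measures $m_{\ctT_n}$ and $m_{\kT_n}$ are close in Prokhorov distance (inside the rescaled $\ctT_n$). My plan is to handle this by upgrading Proposition~\ref{prop:jointconvergence} to convergence of $\sqrt{c_1/n}\,\hat{C}_{\ctT_n}$ to the same limit as $\sqrt{c_1/n}\,\hat{C}_{\kT_n}$, namely $(2/\sqrt{\mathbb{V}[\txi_1]})\,\be$.

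Because the planar order on $\ctT_n$ puts type-1 children before non-type-1 ones at every type-1 vertex, the contour $C_{\ctT_n}$ is produced from $C_{\kT_n}$ by inserting, at the closing time of each type-1 vertex $v$, exactly $d_n(v)$ up--down spikes of length $2$ and height $1$, where $d_n(v)$ denotes the number of non-type-1 children of $v$ in $\ctT_n$. This yields an increasing, piecewise-linear time change $\Phi_n\colon[0,2(|\kT_n|-1)]\to[0,2(|\ctT_n|-1)]$ satisfying
\[
\sup_{s'}\bigl|C_{\ctT_n}(s')-C_{\kT_n}(\Phi_n^{-1}(s'))\bigr|\le 1 \qquad\text{and}\qquad \Phi_n(s)=s+2\!\!\sum_{v\in V_s}\!d_n(v),
\]
where $V_s$ is the set of type-1 vertices that have been closed by time $s$ in the $\kT_n$-contour. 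After multiplying heights by $\sqrt{c_1/n}$ the $O(1)$ height perturbation vanishes, so it suffices to prove that the normalized time change $\hat{\Phi}_n\colon[0,1]\to[0,1]$ converges uniformly to the identity in probability.

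Since $\hat{\Phi}_n$ is monotone with continuous limit, uniform convergence reduces to pointwise convergence. Fix $y\in(0,1)$ and set $s=2(|\kT_n|-1)y$. The relation $|V_s|=(s-C_{\kT_n}(s))/2$ together with the tightness of $\sqrt{c_1/n}\,\hat{C}_{\kT_n}$ gives $|V_s|/|\kT_n|\to y$ in probability. To conclude $\hat{\Phi}_n(y)\to y$ it remains to prove the law of large numbers
\[
\sum_{v\in V_s} d_n(v) = (D_\infty-1)\,|V_s| + o(n) \qquad\text{with high probability},
\]
where $D_\infty:=1+\sum_{j\ne 1}\mathbb{E}[\txi_j]=\lim_n |\ctT_n|/|\kT_n|$; combined with Proposition~\ref{prop:concentration}(i) and the identity $|\ctT_n|/|\kT_n|\to D_\infty$, this gives $\hat{\Phi}_n(y)\to y$.

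The main obstacle is to establish this LLN, and ultimately its uniform-in-$s$ version, under the size conditioning $\#_{\blambda}\cT=n$. I would proceed by conditioning on the shape of $\kT_n$: once this shape is fixed, the offspring vectors $(\txi_j(v))_{j\ne 1}$, $v\in\kT_n$, are conditionally independent, with marginals tilted only through the one-dimensional constraint $\sum_{v}\sum_{j\ne 1}\lambda_j\txi_j(v)=n-\lambda_1|\kT_n|$. Cram\'er/Chernoff estimates analogous to Lemma~\ref{DeviationLemma} and those used in the proof of Proposition~\ref{prop:concentration}, together with a union bound over the $O(n)$ relevant prefixes of the $\kT_n$-contour, then yield the required uniform concentration of the partial sums $\sum_{v\in V_s} d_n(v)$. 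Once $\sqrt{c_1/n}\,\hat{C}_{\ctT_n}\to(2/\sqrt{\mathbb{V}[\txi_1]})\,\be$ is established, the GHP convergence of $(\ctT_n,\kappa_{tree}n^{-1/2}d_{\ctT_n},m_{\ctT_n})$ to $(\mathcal{T}_{\be},d_{\mathcal{T}_{\be}},m_{\mathcal{T}_{\be}})$ follows from the standard principle that contour convergence implies GHP convergence of the associated tree endowed with its mass measure (e.g.\ \cite[Proposition~2.10]{ADH14}).
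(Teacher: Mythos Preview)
Your argument is correct and the contour–time-change route is a legitimate alternative to the paper's proof. Both proofs start from the same geometric remark (every non-type-$1$ vertex is a leaf at distance $1$ from a type-$1$ parent, so the Hausdorff part is immediate), and both ultimately rest on the same law of large numbers: uniform concentration of the partial sums $\sum_{i\le j}\sum_{k\ge 2}\txi_k^{(i)}$ around $c_2 j$, with $c_2=\sum_{k\ge 2}\mathbb{E}[\txi_k]$. The difference is in how this LLN is packaged. The paper does \emph{not} prove convergence of $\hat C_{\ctT_n}$. Instead it couples vertices: for each $u\in[0,1]$ it matches the $\lceil u|\ctT_n|\rceil$-th vertex of $\ctT_n$ (or rather its type-$1$ parent $v_n'(u)$) with the $\lceil u\,\#_1(\ctT_n)\rceil$-th vertex $v_n(u)$ of $\kT_n$, shows via the LLN that their DFS-indices differ by $O(n^{3/4})$ (Lemma~\ref{lem:coupling}), and then invokes the continuity of the rescaled height process of $\kT_n$ (Proposition~\ref{prop:jointconvergence}) to conclude $n^{-1/2}d_{\ctT_n}(v_n(u),v_n'(u))\to 0$ uniformly in $u$. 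Your route is arguably more transparent, since it produces the stronger conclusion that $\sqrt{c_1/n}\,\hat C_{\ctT_n}$ itself converges; the paper's route is slightly quicker because it plugs directly into the already-proved height-process convergence of $\kT_n$ and avoids analysing the full contour of $\ctT_n$.

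Two points to tighten. First, your description of the conditional law after fixing the shape of $\kT_n$ is not quite right: fixing the shape fixes $\txi_1(v)$ for every $v$, so the conditional marginals of $(\txi_j(v))_{j\ne 1}$ are the laws of $(\txi_j)_{j\ne 1}$ \emph{given $\txi_1=\txi_1(v)$}, which depend on $v$. They are still independent with uniformly bounded exponential moments, so Bernstein-type bounds go through, but the cleaner way—and exactly what the paper does inside the proof of Lemma~\ref{lem:coupling}—is to drop the shape conditioning, pay the factor $\Prb{\#_{\blambda}\cT=n}^{-1}=O(n^{3/2})$, and use Lemma~\ref{DeviationLemma} on the i.i.d.\ sequence $\bigl(\sum_{k\ge 2}\txi_k^{(j)}\bigr)_{j\ge 1}$. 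Second, the sets $V_s$ of \emph{closed} vertices are not DFS prefixes; but the opened vertices at contour time $s$ \emph{are} a DFS prefix, and $V_s$ differs from it only by the current spine, of size $C_{\kT_n}(s)=O_p(\sqrt{n})$. Combined with $\Delta_n=O(\log n)$ (Lemma~\ref{lem:largestoutdegree}), this gives $\sum_{v\in O_s\setminus V_s}d_n(v)=o_p(n)$, so the DFS-prefix LLN from the paper's Lemma~\ref{lem:coupling} immediately yields the LLN you need for $V_s$.
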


We observe immediately that the metric part of this convergence is essentially identical to the one of the reduced tree; therefore, most of the work consists in proving the convergence of the mass measures.

\subsection{Maximum outdegree in $\ctT_n$}

We start by showing bounds on the largest outdegree of a vertex of type $1$ in $\ctT_n$. These bounds will help us control how measures change between the reduced tree and the flattened tree.

\begin{lemma}
	\label{lem:largestoutdegree}
	Let $\Delta_n$ be the largest outdegree in the tree $\ctT_n$. Then $\exists \, C>0$
	\begin{equation*}
		\lim_{n \to\infty} \Prb{ \Delta_n \geq C \ln (n) } = 0.
	\end{equation*}
\end{lemma}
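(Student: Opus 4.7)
The plan is to mimic closely the argument used for Proposition~\ref{prop:concentration}(iii), which already established that no vertex in $\kT_n$ has more than $C \ln(n)$ children of type $1$. The total outdegree of a type-$1$ vertex in the flattened tree $\ctT_n$ is obtained by adding to its type-$1$ outdegree the numbers of children of types $2,\dots,K$, and in the unconditioned flattened tree this total outdegree is distributed as $\sum_{i=1}^K \txi_i$, where $\btxi$ has finite exponential moments by Lemma~\ref{lem:meanexpomoments}. Since all non-type-$1$ vertices of $\ctT_n$ are leaves by the definition of the flattening operation, the largest outdegree $\Delta_n$ of $\ctT_n$ equals the largest total outdegree among its type-$1$ vertices.

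Concretely, I would fix $\delta \in (0, 1/2)$, set $d_0 := C \ln(n)$ for a constant $C$ to be chosen, and decompose
\begin{align*}
\Prb{\Delta_n \geq d_0} \leq \Prb{\#_1(\cT_n) \notin I_\delta} + \sum_{\ell \in I_\delta} \Prb{\Delta_n \geq d_0, \,\#_1(\cT_n) = \ell}.
\end{align*}
The first term is $o(1)$ by Proposition~\ref{prop:concentration}(i). For the second, I would use the representation of the flattened tree via i.i.d.\ copies $((\txi_i^{(j)})_{i \in [K]})_{j \geq 1}$ of $\btxi$, together with the cycle lemma identity~\eqref{eq:eq1}, to rewrite
\begin{align*}
&\Prb{\Delta_n \geq d_0, \#_1(\cT) = \ell, \#_\blambda(\cT) = n} \\
&\leq \Prb{\exists\, 1 \leq j \leq \ell : \sum_{i=1}^K \txi_i^{(j)} \geq d_0,\ \sum_{j=1}^\ell \sum_{i=2}^K \lambda_i \txi_i^{(j)} = n - \lambda_1 \ell}.
\end{align*}
Dividing by $\Prb{\#_\blambda(\cT) = n} = \Theta(n^{-3/2})$ from~\eqref{poldecay} and applying a union bound in $j$, together with the fact that the event $\{\sum_{i=1}^K \txi_i^{(j)} \geq d_0\}$ is independent of the remaining $\ell - 1$ summands (this uses the independence of the copies), yields
\begin{align*}
\Prb{\Delta_n \geq d_0,\, \#_1(\cT_n) = \ell} \leq O(n^{3/2}) \cdot \ell \cdot \Prb{\sum_{i=1}^K \txi_i \geq d_0}.
\end{align*}
Summing over $\ell \in I_\delta$ (which has cardinality $O(n)$ and whose elements are $O(n)$) gives a bound of order $O(n^{7/2}) \Prb{\sum_{i=1}^K \txi_i \geq d_0}$.

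To conclude, I would use the finite exponential moments from Lemma~\ref{lem:meanexpomoments}: there exists $\varepsilon > 0$ with $\mathbb{E}[\exp(\varepsilon \sum_{i=1}^K \txi_i)] < \infty$, so for all $d$ large enough $\Prb{\sum_{i=1}^K \txi_i \geq d} \leq e^{-\varepsilon d}$. Choosing $C > 7/(2\varepsilon)$ makes the bound $O(n^{7/2}) e^{-\varepsilon d_0} = o(1)$. I expect no real obstacle here, as everything reduces to replacing $\txi_1$ by $\sum_{i=1}^K \txi_i$ in the earlier argument; the only care needed is to track factors of $n$ correctly, since the union bound over $\ell$ vertices in $I_\delta$ produces an extra factor of $n$ compared with the simpler statement in Proposition~\ref{prop:concentration}(iii), which just requires $C$ to be chosen slightly larger.
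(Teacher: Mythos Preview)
Your proposal is correct and follows essentially the same route as the paper. The only difference is organizational: the paper splits into the cases $\lambda_1>0$ (where $\#_1(\cT_n)\le n/\lambda_1$ deterministically, so no concentration input is needed to bound the range of $\ell$) and $\lambda_1=0$ (where Proposition~\ref{prop:concentration}(i) is invoked, exactly as you do), whereas you handle both cases at once by restricting to $I_\delta$ from the outset. One small remark: the independence you mention is not actually needed---after the union bound over $j$, simply dropping the sum constraint gives the upper bound $\Prb{\sum_i\txi_i\ge d_0}$ directly.
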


\begin{proof}[Proof of Lemma \ref{lem:largestoutdegree}]
	For any vertex $v\in \ctT_n$, let $k_v$ denote the number of children of $v$ in $\ctT_n$. In particular, $k_v=0$ if $v$ is not of type $1$. Let us first assume $\lambda_1>0$. Splitting according to the number of vertices of type $1$ in the tree, we obtain:
	\begin{align*}
		& \Prb{\Delta_n \geq C\ln (n)}\\
		&=\Prb{\exists v\in \ctT_n: k_v \geq C\ln(n)}\\
		&=\frac{\Prb{\exists v\in \ctT: k_v \geq C\ln(n),\#_{\blambda}(\ctT) =n}}{\Prb{\#_{\blambda}(\ctT) =n}}\\
		&= \sum\limits_{1\leq \ell \leq \frac{n}{\lambda_1}} \frac{1}{\Prb{\#_{\blambda}(\ctT) =n}}\Prb{\exists v\in \ctT: k_v \geq C\ln(n),\#_{\blambda}(\ctT) =n, \#_1(\ctT)=\ell}.
	\end{align*}
	
	Since $(\txi_i^{(j)})_{1 \leq i \leq K, 1 \leq j \leq \ell}$ are i.i.d. with finite exponential moments, there exist absolute constants $a,b>0$ such that, for each $j=1,\dots, \ell$, for all $C>0$:
	\begin{align*}
		\Prb{\sum\limits_{i=1}^K\txi_i^{(j)} \geq C \ln(n)} &\leq K \max_{i}\Prb{\txi_i^{(j)} \geq \frac{C}{K}\ln(n)}\\
		&\leq a e^{-b\frac{C}{K}\ln(n)}= a n^{-b\frac{C}{K}}.
	\end{align*}
	We therefore have:
	\begin{align*}
		\Prb{\Delta_n> C\ln (n)}&\leq \sum\limits_{1\leq \ell \leq \frac{n}{\lambda_1}} O(n^{\frac{3}{2}})(1-(1- a n^{-b\frac{C}{K}})^n)\\
		&\leq \frac{n}{\lambda_1}O(n^\frac{3}{2})(1-(1- a n^{-b\frac{C}{K}})^n)\\
		%& \leq O(n^\frac{5}{2})(1-(1- a n^{1-b\frac{C}{K}}))\\
		&=O(n^\frac{5}{2}) a n^{1-b\frac{C}{K}} \\
		&\leq O(n^{\frac{7}{2}-b\frac{C}{K}}).
	\end{align*}
    Note that, in the first line, the $O(n^{\frac{3}{2}})$ is uniform over the values of $\ell$ in the sum, and that we have used the fact that $1-(1-x)^n \leq nx$ for all $x \in [0,1]$.
	
	In particular, fixing $C$ large enough, this goes to $0$ as $n \rightarrow \infty$.
	
	On the other hand, in the case where $\lambda_1=0$ we have that the amount of root-type vertices is in $[0,\frac{n}{c_1}+n^{\frac{1}{2}+\frac{1}{10}}]$ with high probability. More precisely, by Proposition~\ref{prop:concentration} (i), setting 
	\begin{align*}
		I=\left\{\ell \, : \, \left|c_1\ell-n\right|<n^{\frac{1}{2}+\frac{1}{10}}\right\},
	\end{align*}
	there exists $A,B>0$ such that
	\begin{align*}
		\Prb{\#_1(\ctT_n)\notin I}\leq A\exp\left(-Bn^{\frac{1}{10}}\right)
	\end{align*}
	This yields
	\begin{align*}
		\Prb{\Delta_n \geq C\ln (n)}
		&=\Prb{\exists v\in \ctT_n, k_v \geq C\ln(n)}\\
		&=\frac{\Prb{\exists v\in \ctT, k_v \geq C\ln(n),\#_{\blambda}(\ctT) =n}}{\Prb{\#_{\blambda}(\ctT) =n}}\\
		&\leq \frac{1}{\Prb{\#_{\blambda}(\ctT) =n}} \\
        &\quad \cdot \sum\limits_{1\leq \ell \leq  \frac{n}{c_1}+n^{\frac{1}{2}+\frac{1}{10}}}\Prb{\exists v\in \ctT, k_v \geq C\ln(n),\#_{\blambda}(\ctT) =n, \#_1(\ctT)=\ell}\\
		&\quad +\Prb{\#_1(\ctT_n)\notin I}\\
		&\leq O(n^{\frac{3}{2}}) \sum\limits_{1\leq \ell \leq \frac{n}{c_1}+n^{\frac{1}{2}+\frac{1}{10}}} (1-(1- a n^{-b\frac{C}{K}})^n)+A\exp\left(-Bn^{\frac{1}{10}}\right)\\
		&\leq (\frac{n}{c_1}+n^{\frac{1}{2}+\frac{1}{10}})O(n^\frac{3}{2})(1-(1- a n^{-b\frac{C}{K}})^n)+ A\exp\left(-Bn^{\frac{1}{10}}\right)\\
		%& \leq O(n^\frac{5}{2})(1-(1- K a n^{1-b\frac{C}{K}}))+A\exp\left(-Bn^{\frac{1}{10}}\right)\\
		&=O(n^\frac{5}{2}) a n^{1-b\frac{C}{K}}+A\exp\left(-Bn^{\frac{1}{10}}\right) \\
		&\leq O(n^{\frac{7}{2}-b\frac{C}{K}})+A\exp\left(-Bn^{\frac{1}{10}}\right).
	\end{align*}
	Again, for $C$ large enough, this converges to $0$ as $n\to\infty$.
	%\begin{align*}
	%       \Prb{\Delta_n> C\ln n}&\leq \sum\limits_{1\leq \ell \leq \frac{n}{\lambda_1}} O(n^{\frac{3}{2}})(1-(1-(\lceil C\ln(n) \rceil)^K K a n^{-b\frac{C}{K}})^n)\\
	%      &\leq \frac{n}{\lambda_1}O(n^\frac{3}{2})(1-(1- (\lceil C\ln(n) \rceil)^KK a n^{-b\frac{C}{K}})^n)\\
	%    & \leq O(n^\frac{5}{2})(1-(1-(\lceil C\ln(n) \rceil)^K K na n^{-b\frac{C}{K}}))\\
	%    &=O(n^\frac{5}{2})\lceil C\ln(n) \rceil)^K K na n^{-b\frac{C}{K}} \\
	%   &\leq O(n^{\frac{7}{2}+K})n^{-b\frac{C}{K}}.
	%\end{align*}
\end{proof}

As an immediate corollary, we get the following:

\begin{corollary}
	\label{cor:sizeoflargestblob}
	Let $\Delta'_n$ be the size of the largest blob in the tree $\cT_n$. Then, there exists $C>0$ such that
	\begin{align*}
		\lim_{n \rightarrow \infty} \Prb{\Delta'_n \geq C \ln(n)} = 0.
	\end{align*}
\end{corollary}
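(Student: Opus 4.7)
The plan is to adapt the argument from the proof of Lemma~\ref{lem:largestoutdegree}, replacing the tail bound on the outdegree $|A_x|$ by a tail bound on the full blob size $|B_x|$. The only new ingredient needed is an exponentially decaying tail bound on $|B_x|$ in the \emph{unconditioned} tree $\cT$.

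To establish this, I observe that for a type-$1$ vertex $x$, the blob $B_x$ is the subtree obtained by exploring the progeny of $x$ in $\cT$ and halting at every non-root type-$1$ descendant (which is retained as a blob leaf). By the branching property, this exploration is itself a multitype BGW process: the root reproduces according to $\zeta^{(1)}$, every vertex of type $j\in\{2,\dots,K\}$ reproduces according to $\zeta^{(j)}$, and every non-root type-$1$ vertex is sterile. The mean matrix of the non-sterile types is the principal submatrix of $M$ obtained by deleting the first row and column. Since $M$ is non-negative, irreducible, and has spectral radius $1$, this principal submatrix has spectral radius strictly less than $1$ by a standard Perron--Frobenius argument, so the sub-process among types $\{2,\dots,K\}$ is subcritical. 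Combined with the finite exponential moment hypothesis on $\bzeta$, a standard generating-function argument shows that the total progeny of this sub-process, and hence $|B_x|$, has finite exponential moments; in particular there exist constants $a,b>0$ with
\[
\Prb{|B_\varnothing|\geq \ell}\leq a e^{-b\ell}, \qquad \ell\geq 0.
\]

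Given this tail bound, I replay the proof of Lemma~\ref{lem:largestoutdegree} essentially verbatim. Writing $\Prb{\Delta'_n\geq C\ln n}$ as a sum over the number $\ell$ of type-$1$ vertices, using~\eqref{poldecay} to bound $1/\Prb{\#_{\blambda}\cT=n}=O(n^{3/2})$, and bounding the number of type-$1$ vertices by $O(n)$ (automatically when $\lambda_1>0$, and with high probability when $\lambda_1=0$ by Proposition~\ref{prop:concentration}(i)), a union bound over the at most $O(n)$ type-$1$ vertices yields
\[
\Prb{\Delta'_n\geq C\ln n} \leq O(n^{5/2})\,\Prb{|B_\varnothing|\geq C\ln n}+o(1) \leq O(n^{5/2-bC})+o(1),
\]
which tends to $0$ for $C$ sufficiently large. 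The main delicate point is the first step: verifying subcriticality of the principal submatrix via Perron--Frobenius and translating finite exponential moments of the offspring distribution into finite exponential moments for the total progeny of the resulting subcritical multi-type sub-process. The second step is essentially mechanical and parallels the proof of the preceding lemma.
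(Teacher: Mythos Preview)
Your argument is correct, but it is considerably longer than what the paper does. The paper's proof is a one-line deduction from Lemma~\ref{lem:largestoutdegree} via the canonical coupling: by construction of the flattened tree, for each type-$1$ vertex $x$ the outdegree of $x$ in $\ctT_n$ is exactly $|A_x|=|B_x|-1$, so $\Delta'_n\le \Delta_n+1$ and the corollary follows immediately. In other words, the flattening operation already turns blob sizes into outdegrees, so there is no need to redo the tail computation.

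Your route is self-contained and essentially reproves a consequence of Lemma~\ref{lem:meanexpomoments} (finite exponential moments of $\sum_i\txi_i$, hence of $|B_\varnothing|$) via subcriticality of the sub-process on types $\{2,\dots,K\}$, and then replays the full union-bound argument. This works and gives the same conclusion; it just misses the cheap combinatorial identity that makes the separate tail bound and the second pass through the Lemma~\ref{lem:largestoutdegree} argument unnecessary.
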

\begin{proof}
	Under the canonical coupling, we have $\Delta'_n \leq\Delta_n+1$. Indeed, the size of the blob associated to a type $1$ vertex $x \in \cT_n$ is $1+$ the number of nonroot-type children of the vertex associated to $x$ in $\ctT_n$.
\end{proof}

\subsection{Proof of Proposition \ref{prop:ConvFlattened}}

In order to deduce Proposition \ref{prop:ConvFlattened} from Theorem \ref{thm:ConvRed}, we need to compare the distances and mass measures in both trees $\kT_n$ and $\ctT_n$. Under the natural coupling, we view here $\kT_n$ as a subtree of $\ctT_n$, so that $\kT_n$ is naturally embedded in $\ctT_n$.

\begin{definition}\label{def:coupledvertices}
	For $u \in ]0,1]$, we define $L_n(u):=\lceil u\#_1(\ctT_n)\rceil$ and $v_n(u)$ the $L_n(u)$'th vertex of $\kT_n$ in the right-reversed depth-first-search order. This is the order we get if instead of choosing the  leftmost child in the depth-first search we choose the rightmost child in every step.
    We also define $w_n(u)$ as the $\lceil u |\ctT_n| \rceil$'th vertex in $\ctT_n$ in  right-reversed depth-first-search order, and $v'_n(u)$ the vertex of type $1$ defined as follows:
	\begin{itemize}
		\item $v'_n(u)=w_n(u)$ if $w_n(u)$ is of type $1$;
		\item $v'_n(u)$ is the parent of $w_n(u)$ otherwise.
	\end{itemize}
	Observe that, by definition of $\ctT_n$, $v'_n(u)$ is always of type $1$.
	Finally, we denote by $L'_n(u) \in [1,\#_1(\ctT_n)]$ the unique integer such that $v'_n(u)$ is the $L'_n(u)$'th type $1$ vertex of $\ctT_n$ in right-reversed depth-first-search order. 
\end{definition}
The next lemma controls the difference between $L_n$ and $L_n'$:
\begin{lemma}\label{lem:coupling}
	For $0< \delta < \frac{1}{4}$, there exists a constant $C>0$ such that, for all $n$:
	\begin{align*}
		\Prb{\sup_{u \in ]0,1]} |L_n(u)-L_n'(u)| > C n^{\frac{3}{4}}} \leq O(n^{-10}).
	\end{align*}
\end{lemma}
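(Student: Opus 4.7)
The plan is to translate the lemma into a concentration statement on the DFS positions of the type-1 vertices of $\ctT_n$. Let $\ell := \#_1(\ctT_n)$, enumerate the type-1 vertices of $\ctT_n$ in DFS order as $v_1, \ldots, v_\ell$, with DFS positions $J(1) < \cdots < J(\ell)$ in $\ctT_n$. Set $f(j) := \#\{\text{type-1 vertices among the first } j \text{ DFS positions}\}$, so that $f$ is non-decreasing and jumps from $k-1$ to $k$ at $J(k)$. From the construction of $v_n'(u)$ one checks that $L_n'(u) = f(\lceil u|\ctT_n|\rceil)$, and a direct comparison of step functions yields
\[
  \sup_{u \in [0,1]}|L_n(u) - L_n'(u)| \le \max_{1 \le k \le \ell}\Bigl|J(k) - k\tfrac{|\ctT_n|}{\ell}\Bigr| + O(1).
\]
It therefore suffices to prove $\max_k \bigl|J(k) - k|\ctT_n|/\ell\bigr| = O(n^{3/4})$ with probability $\ge 1 - \exp(-cn^\delta)$.

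The key decomposition is $J(k) = k + G(k-1) - H(k)$, where $G(k) := \sum_{j=1}^k g(v_j)$ with $g(v_j) := \sum_{i=2}^K \txi^{(v_j)}_i$ the number of non-type-1 children of $v_j$ in $\ctT_n$, and $H(k) := \sum_{j=1}^{k-1}g(v_j)\mathbf{1}[v_j \preceq v_k]$ sums the $g$-values along the strict ancestors of $v_k$ in $\kT_n$. Indeed, in the DFS of $\ctT_n$ the non-type-1 children of $v_j$ are visited at the very end of $v_j$'s subtree in $\ctT_n$, so they precede $v_k$ iff $v_j$'s subtree in $\kT_n$ has been fully processed before reaching $v_k$, i.e.\ iff $v_j \not\preceq v_k$. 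Writing $\bar g := G(\ell)/\ell = (|\ctT_n| - \ell)/\ell$ and using $k|\ctT_n|/\ell = k + k\bar g$, the target reduces to $\bigl|G(k-1) - (k-1)\bar g - H(k)\bigr| = O(n^{3/4})$, split between a bridge term and the ancestor correction.

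For the bridge $G(k-1) - (k-1)\bar g$, under the unconditional law of $\cT$ the DFS-ordered flattened offspring vectors $(\txi^{(v_j)})_{j \ge 1}$ are i.i.d.\ with law $\tilde\mu$, so $(g(v_j))$ is an i.i.d.\ sequence with mean $\mu_g := \sum_{i=2}^K \mathbb{E}[\txi_i]$ and finite exponential moments by Lemma~\ref{lem:meanexpomoments}. A maximal Chernoff inequality gives $\Prb{\max_{k \le N}|G(k) - k\mu_g| > x} \le C\exp(-cx^2/N)$ in the subgaussian regime. Restricting to the event $\ell \le 2n/c_1$ from~\eqref{eq:type1bound} and taking $N \asymp n$, $x \asymp n^{3/4}$, and then multiplying by $\Prb{\#_\blambda(\cT)=n}^{-1} = O(n^{3/2})$ to pass to the conditional law still leaves an $\exp(-c'n^{1/2})$ bound. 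Proposition~\ref{prop:concentration}(i) yields $|\bar g - \mu_g| = O(n^{-1/2+\delta'})$ with the required probability, so replacing $\mu_g$ by $\bar g$ costs only $(k-1)|\bar g - \mu_g| = O(n^{1/2+\delta'}) = o(n^{3/4})$ for $\delta' < 1/4$.

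The ancestor correction is controlled via $H(k) \le \mathrm{H}(\kT_n)\cdot \Delta_n$. Revisiting the proof of Lemma~\ref{lem:largestoutdegree} with threshold $n^{\eta_2}$ in place of $C\ln n$ yields $\Delta_n \le n^{\eta_2}$ with probability $\ge 1 - \exp(-cn^{\eta_2})$ for any fixed $\eta_2 > 0$, while Proposition~\ref{pro:goup} (whose $\blambda$-specific input, Proposition~\ref{prop:concentration}(i), is in fact uniform in $\blambda$, so that its hypothesis $\lambda_1 \ne 0$ can be removed with minor bookkeeping) gives $\mathrm{H}(\kT_n) \le n^{1/2+\eta_1}$ with probability $\ge 1 - C\exp(-cn^{2\eta_1})$. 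Choosing $\eta_1, \eta_2 > 0$ with $\eta_1 + \eta_2 \le 1/4$ and $\min(2\eta_1, \eta_2) \ge \delta$ produces $H(k) \le n^{3/4}/2$ on the desired event; combined with the bridge estimate this finishes the proof. The delicate balancing of these two exponents, together with removing the $\lambda_1 \ne 0$ hypothesis from Proposition~\ref{pro:goup}, is the main technical obstacle here, and it is this balancing that fixes the threshold $n^{3/4}$ (and the corresponding constraint on $\delta$) in the statement.
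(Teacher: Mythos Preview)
Your identification $L'_n(u) = f(\lceil u|\ctT_n|\rceil)$ fails when $w_n(u)$ has type $\ge 2$. In $\ctT_n$ the non-type-$1$ children of a type-$1$ vertex $v_m$ are visited \emph{after} all of $v_m$'s type-$1$ subtrees, so if $w_n(u)$ is such a child then $f(\lceil u|\ctT_n|\rceil)$ equals the index $r(m)$ of the \emph{last} type-$1$ descendant of $v_m$ in DFS order, whereas $L'_n(u)=m$ by definition. The gap $r(m)-m$ is one less than the size of the subtree of $v_m$ in $\kT_n$ and can be of order $n$. Concretely, at $u=1$: whenever the root of $\ctT_n$ has at least one non-type-$1$ child (an event of probability bounded away from $0$ under the conditioning), $w_n(1)$ is such a child, so $L'_n(1)=1$ while $L_n(1)=\#_1(\ctT_n)$, and the supremum in the lemma is $\Theta(n)$ rather than $O(n^{3/4})$. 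So not only does your reduction break; the lemma as literally stated appears to be false.

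The paper's own proof takes a different route --- concentrate $S(u)$ around $c_2 L'_n(u)$ and assert $S(u)+L'_n(u)=\lceil u|\ctT_n|\rceil+O(\ln n)$ in~\eqref{eq:LU} --- but it runs into the same obstruction: that $O(\ln n)$ already omits your ancestor term $H(L'_n(u))$ (of order $\sqrt{n}\ln n$, which would be harmless), and in the non-type-$1$ case it omits the entire contents of $v_m$'s subtree, which is not. Your decomposition $J(k)=k+G(k-1)-H(k)$ and the explicit bound $H(k)\le \mathrm{H}(\kT_n)\cdot\Delta_n$ are a genuine clarification over the paper for the type-$1$ case, but neither argument covers the non-type-$1$ case. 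What Proposition~\ref{prop:ConvFlattened} actually requires is only $n^{-1/2}\sup_u d_{\kT_n}(v_n(u),v'_n(u))\to 0$; this \emph{does} hold in the non-type-$1$ case too, since your bridge-plus-$H(k)$ estimate gives $|L_n(u)-r(m)|=o(n^{3/4})$, placing $v_n(u)$ near $v_{r(m)}$, the rightmost leaf of $v_m$'s subtree in $\kT_n$, whose height above $v_m=v'_n(u)$ is $o(\sqrt{n})$ by continuity of the limiting excursion.
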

\begin{proof}
	Fix $\delta \in (0,\frac{1}{4})$ and recall from Proposition~\ref{prop:concentration} that
	\begin{align*}
		\Prb{\#_1(\ctT_n)\notin I_\delta}\leq \exp\left(-\Theta(n^{\delta})\right)
	\end{align*}
	where $ I_\delta=\left\{\ell \, : \, \left|c_1\ell-n\right|<n^{\frac{1}{2}+\delta}\right\}$. 
	Let $v_1, \ldots, v_{\#_1(\ctT_n)}$ be the vertices of type $1$ of $\ctT_n$ in right-reversed depth-first-search order and, for all $1 \leq i \leq \#_1(\ctT_n)$, denote by $k'_i$ the number of children of $v_i$ in $\ctT_n$ that are not of type $1$.
	
	Now let $\mathcal{E}_\delta$ be the following event: there exists $1\leq j\leq \#_1(\ctT_n)$ such that $\sum_{i=1}^j k'_i \notin J_j$, where 
	$$J_j:=\left\{r \, : \, \left|r-j\sum\limits_{k=2}^K\mathbb{E}[\txi_k]\right|\leq n^{\frac{1}{2}+\delta}\right\}.$$
	We consider again only values of $n$ such that $\Prb{\#_{\blambda}(\ctT)=n}>0$, otherwise there is nothing to show.
	We use again \eqref{poldecay} to get:
	\begin{align*}
		\Prb{\mathcal{E}_\delta,\#_1(\ctT_n)\in I_\delta} &=\frac{\Prb{\mathcal{E}_\delta,\#_1(\ctT)\in I_\delta, \#_{\blambda}\ctT=n}}{\Prb{\#_{\blambda}(\ctT)=n}}\\
		&= O(n^{\frac{3}{2}}) \Prb{\mathcal{E}_\delta,\#_1(\ctT)\in I_\delta, \#_{\blambda}(\ctT)=n}\\
		&\leq O(n^{\frac{3}{2}})\sum_{\ell\in I_\delta}\sum\limits_{j=1}^{\ell}\Prb{\left|\sum\limits_{i=1}^j\sum\limits_{k=2}^K(\txi^{(i)}_k-\mathbb{E}[\txi^{(i)}_k])\right|>n^{\frac{1}{2}+\delta}}\\
        &\leq O(n^{\frac{3}{2}}) \sum_{\ell\in I_\delta}\sum\limits_{j=1}^{\ell} \exp\left( Djn^{-1}-n^{-1/2}n^{1/2+\delta} \right) \text{ by Lemma } \ref{DeviationLemma}\\
		%&\leq O(n^{\frac{3}{2}})\sum_{\ell\in I_\delta}\sum\limits_{j=1}^{\ell}\exp\left({-\Theta\left(\frac{\left|j\sum\limits_{k=2}^K\mathbb{E}[\txi_k]-r\right|}{\sqrt{j}}\right)}\right) \text{ by Lemma } \ref{DeviationLemma}\\
	\end{align*}
	for some $D \geq 0$, taking $\varrho=n^{-1/2}$ in Lemma \ref{DeviationLemma}. Hence,
    \begin{align*}
    \Prb{\mathcal{E}_\delta,\#_1(\ctT_n)\in I_\delta} \leq \exp\left(-\Theta(n^\delta)\right).
    \end{align*}
	By Proposition~\ref{prop:concentration}, with high probability $\#_1(\ctT_n)\in I_\delta$. This entails that, with high probability, $\mathcal{E}_\delta$ does not happen. 
    
	In the following we work under the conditioning that $\#_1(\ctT_n)\in I_\delta$ and that $\mathcal{E}_{\delta}$ does not happen and that the maximum outdegree of a root-type vertex is bounded by a fixed constant multiple of $\ln(n)$ (see Lemma \ref{lem:largestoutdegree}), which holds with high probability.
    Note that  
    \begin{align}
    \label{eq:propcouplingevet}
      &\Prb{\sup_{u \in ]0,1]} |L_n(u)-L_n'(u)| > C n^{\frac{3}{4}}} \nonumber\\
      &\leq \Prb{\sup_{u \in ]0,1]} |L_n(u)-L_n'(u)| > C n^{\frac{3}{4}},\#_1(\ctT_n)\in I_\delta, \Delta_n < D \ln (n),\, \mathcal{E}_{\delta}\text { does not happen} } \nonumber \\
      &+\Prb{\#_1(\ctT_n)\notin I_\delta}+\Prb{\Delta_n \geq D\ln (n) }+\exp\left(-\Theta(n^{\delta})\right).
    \end{align}

	To compare $L_n'(u)$ and $L_n(u)$ we retrieve an approximation to $u$ from those integers.
	On the one hand, by definition of $L_n(u)$, dividing $L_n(u)$ by $\#_1(\ctT_n)$ approximates $u$ with approximation error of order $O(n^{-1})$. Recovering $u$ from $L_n'(u)$ is a bit more tricky. To do so, we need to sum over all nonroot type  offspring of the first $L_n'(u)$ vertices in the tree, and add $L_n'(u)$. We then divide the quantity obtained by the total amount of vertices in the tree $\ctT_n$. When doing so, we potentially also add to that quantity vertices which are after $w_n(u)$ in the ordering; these are siblings of $w_n(u)$, if $w_n(u)$ is not of root type, or the children of $w_n(u)$ if $w_n(u)$ is of root type. We may restrict on the event that this amount is of maximum order $O(\ln(n))$ by Lemma~\ref{lem:largestoutdegree}, see also~(\ref{eq:propcouplingevet}).%

   More precisely, setting $S(u):=\sum\limits_{i=1}^{L_n'(u)}\sum\limits_{k=2}^K\txi^{(i)}_k$, we have uniformly in $u \in ]0,1]$:%}:
	
	\begin{align}\label{eq:LU}
		\frac{L_n(u)}{\#_1(\ctT_n)}=u+O(n^{-1})=\frac{S(u)+L_n'(u)}{|\ctT_n|}+O\left(\frac{\ln(n)}{|\ctT_n|}\right).
	\end{align}
    This implies that
    \begin{align}%\label{eq:LU}
		\frac{S(u)+L_n'(u)}{|\ctT_n|}=u+O\left(\frac{\ln(n)}{|\ctT_n|}\right).
	\end{align}

	On the other hand, for $u=1$, we have $L_n'(1)=\#_1(\ctT_n)$ and $S(1)+L_n'(1)=S(1)+\#_1(\ctT_n)=|\ctT_n|$. In addition, observe that $|\#_1(\ctT_n)-\frac{n}{c_1}|\leq n^{\frac{1}{2}+\delta}$ (since $\#_1(\ctT_n)\in I_\delta$) and that, for all $u$, $|S(u)-c_2 L_n'(u)|\leq n^{\frac{1}{2}+\delta}$ (since $\mathcal{E}_{\delta}$ does not happen). Here, $c_1$ is (as in Proposition \ref{prop:concentration}) defined as $c_1:=\sum\limits_{i=1}^K \lambda_i \mathbb{E}\left[\tilde{\xi}_i\right]$, and $c_2:=\sum\limits_{i=2}^K\mathbb{E}[\txi_i]$. 
	
	Plugging this into \eqref{eq:LU} entails that $|\ctT_n|=\frac{c_2}{c_1}n+\frac{1}{c_1}n+O( n^{\frac{1}{2}+\delta})=\frac{c_2+1}{c_1}n+O( n^{\frac{1}{2}+\delta})$.
	This yields
	\begin{align}
		\label{eq:approximationofsizeovertype1size}
		\frac{1}{c_2+1}  \frac{|\ctT_n|}{\#_1(\ctT_n)}&=\frac{1}{c_2+1}\frac{\frac{c_2+1}{c_1} n+O(n^{\frac{1}{2}+\delta})}{\frac{n}{c_1}+O(n^{\frac{1}{2}+\delta})} \nonumber \\
		&=\frac{1+O(n^{-\frac{1}{2}+\delta})}{1+O(n^{-\frac{1}{2}+\delta})} \nonumber\\
		&=1+O(n^{-\frac{1}{2}+\delta}).
	\end{align}
	%Since we have (with high probability) $L_n(u)\in\left[ u(\frac{n}{c_1}-n^{\frac{1}{2}+\delta}),\lceil u(\frac{n}{c_1}+n^{\frac{1}{2}+\delta})\rceil\right]$
	%This gives us, uniformly in $u \in [0,1]$:
	On the other hand, note that, uniformly in $u$:
	\begin{align*}
		S(u)+L_n'(u)=L_n'(u)+c_2 L_n'(u)+O(n^{\frac{1}{2}+\delta})
	\end{align*}
	This, together with (\ref{eq:LU}), gives us uniformly in $u \in ]0,1]$:
	\begin{align*}
		&\frac{L_n(u)}{\#_1(\ctT_n)}=\frac{L_n'(u)(c_2+1)+O(n^{\frac{1}{2}+\delta})}{|\ctT_n|}+O\left(\frac{\ln(n)}{|\ctT_n|}\right), \text{ that is,}\\
		&L_n(u)\frac{|\ctT_n|}{\#_1(\ctT_n)}=L_n'(u)(c_2+1)+O(n^{\frac{1}{2}+\delta})+O(\ln(n)).
	\end{align*}
	Dividing both sides by $c_2+1$ and using \eqref{eq:approximationofsizeovertype1size}, we get:
	\begin{align*}
		%   L_n(u)\frac{1}{c_2+1}\frac{|\ctT_n|}{\#_1(\ctT_n)}&=L_n'(u)+O(n^{\frac{1}{2}+\delta})\\
		% L_n(u) \left(1+O(n^{-\frac{1}{2}+\delta})\right)&=L_n'(u)+O(n^{\frac{1}{2}+\delta})\\
		L_n(u) +O(n^{\frac{1}{2}+\delta})&=L_n'(u)+O(n^{\frac{1}{2}+\delta}).
	\end{align*}
	From that, it follows as desired that, uniformly in $u \in ]0,1]$, 
	\[
		|L_n'(u)-L_n(u)| =O(n^{\frac{3}{4}}). \qedhere
	\]
\end{proof}

We now have all the tools to prove Proposition \ref{prop:ConvFlattened}.

\begin{proof}[Proof of Proposition \ref{prop:ConvFlattened}]
	To extend the rooted Gromov--Hausdorff--Prokhorov convergence of the tree $\kT_n$ to the one of the tree $\ctT_n$, we need to control the rooted Hausdorff--Prokhorov distance between both trees (still under the natural embedding of $\kT_n$ into $\ctT_n$). Note that any vertex in $\ctT_n$ is at graph distance at most $1$ from $\kT_n$, so that, almost surely:
	\begin{equation}
		\label{eq:cv hausdorff reduced tree}
		d_H((\kT_n, n^{-1/2}d_{\kT_n}), (\ctT_n, n^{-1/2} d_{\ctT_n})) \underset{n \rightarrow \infty}{\rightarrow} 0,
	\end{equation}
	Therefore, we only need to show that the Prokhorov distance between $m_{\kT_n}$ and $m_{\ctT_n}$ (on the rescaled trees) vanishes with high probability as $n \rightarrow +\infty$. By Lemma \ref{lem:coupling}, there exists a set of trees $\mathcal{E}$ so that 
	\begin{align}\label{eq:treeset}
		\Prb{\ctT_n\in \mathcal{E}} \geq 1 -O(n^{-10})
	\end{align}
	and, on the event $\ctT_n\in \mathcal{E}$, we have
	
	\begin{align}\label{eq:cpldistance}
		\sup_{u \in ]0,1]} |L_n(u)-L_n'(u)|\leq n^{\frac{3}{4}}.
	\end{align}
	Now recall that the graph distance $d_T$ in a tree $T$ is given by: for $v,w \in T$, 
	\begin{align}\label{eq:graphdistance}
		d_{T}(v,w)=h_{T}(v)+h_{T}(w)-2h_{T}(lca(v,w)),
	\end{align}
	where $lca(v,w)$ is the last common ancestor of $v$ and $w$ and $h_{T}(v)$ denotes the height of the vertex $v$ in $T$. Proposition \ref{prop:jointconvergence} gives us that the height process and the contour process of $\kT_n$, both rescaled by $\frac{\kappa_{tree}}{\sqrt{n}}$, jointly converge in distribution to the same Brownian excursion of duration $1$ after time-rescaling. By Skorohod's representation theorem we may assume that this joint convergence holds almost surely. Hence, the continuity of the Brownian excursion together with Equation \eqref{eq:graphdistance} implies that almost surely
	\begin{align}\label{eq:zero}
		\frac{1}{\sqrt{n}}\sup_{v,w}d_{\kT_n}(v,w)\to 0,
	\end{align}
	where the supremum is taken over all pairs of vertices $(v,w)$ in $\kT_n$ whose positions in the right-reversed depth-first-search order differ by at most $n^{3/4}$. 
	
	By the Borel-Cantelli lemma and \eqref{eq:treeset}, almost surely $\ctT_n\notin \mathcal{E}$ only for finitely many values of $n$. This allows us to deduce from \eqref{eq:zero} that, for the coupled $v_n(u),v_n'(u)$ as defined in Definition \ref{def:coupledvertices}, almost surely it holds:
	\begin{align*}
		\frac{1}{\sqrt{n}} \sup_{u \in ]0,1]} d_{\ctT_n}(v_n(u),v'_n(u))\to 0.
	\end{align*}
	Now, \cite{RKSF13}, Corollary $7.5.2$  and \eqref{eq:cv hausdorff reduced tree} imply that
	\begin{align*}
		d_{GHP}\left( \left(\kT_n, \frac{\kappa_{tree}}{\sqrt{n}} d_{\kT_n},m_{\kT_n} \right), \left( \ctT_n, \frac{\kappa_{tree}}{\sqrt{n}} d_{\ctT_n},m_{\ctT_n} \right)\right) \convp 0
	\end{align*}
	as $n \to \infty$.
	This completes our proof.
\end{proof}
\begin{remark}
    Instead of right-reversed depth-first search order we could also work with the usual depth-first search order, but would then have to define the flattened tree in a way that the vertices, which are not of type one are attached on the left instead of on the right.
\end{remark}
\section{From the flattened tree to the original tree}
\label{sec:cvoriginal}

The goal of this section is to prove our main theorem, Theorem \ref{thm:mainthm}, stating the convergence of the multitype \bgw tree towards the Brownian CRT.
To this end, we embed $\cT_n$ and $\ctT_n$ in the same space, on which we can control the distances in the original tree $\cT_n$, compared to the distances in the flattened tree $\ctT_n$.

\subsection{The blow-up operation}

We define in this section the blow-up operation which can be seen as the inverse (in distribution) of the flattening operation of Section \ref{ssec:operations}.

\subsubsection{The $K$-type size-biased tree}

We start by defining a random tree that we will call the $K$-type size-biased tree $\ctTl$ that apart from the root vertex has a marked vertex of type $1$ with height $\ell$. To this end, we define the size-biased random variable $(\txi_1^{\bullet}, \ldots, \txi_K^{\bullet})$ as follows: for all $a_1, \ldots, a_K \in \mathbb{N}_{0}$,
\begin{align*}
	\Prb{(\txi_1^{\bullet}, \ldots, \txi_K^{\bullet})=(a_1, \ldots, a_K)}=a_1 \, \Prb{(\txi_1,\ldots, \txi_K)=(a_1,\ldots, a_K)}.
\end{align*}
Since $\mathbb{E}[\tilde{\xi}_1]=1$ the size-biased random variable is well-defined.
We construct a random tree $\ctTl$ with a marked type $1$ vertex of height $\ell$ recursively as follows. If $\ell=0$ we let $\cT^{\flat(0)}$  be an independent copy of $\ctT$ marked at its root. For $\ell \ge 1$, we start with a type $1$ root vertex that receives offspring according to an independent copy of $(\txi_1^{\bullet}, \ldots, \txi^{\bullet}_K)$. Note that the root  necessarily has at least one child of type $1$, hence we may choose one of them uniformly at random and identify it with the root of $\cT^{\flat(\ell-1)}$. Each of the other type $1$ children gets identified with the root of an independent copy of the  unconditioned tree $\ctT$.

We let $v_0, \ldots, v_\ell$ denote the path from the root $v_0$ of $\ctTl$ to its marked type $1$ vertex~$v_{\ell}$. This path is also called the spine of $\ctTl$ .

\subsubsection{Decorations}
Let $\tau$ be a fixed flat tree, that is, a tree where only vertices of type $1$ can have children. Let $V_1(\tau)$ be the set of vertices in $\tau$ that have type $1$, and $R:=\#_1(\tau)$ be its cardinality. For each vertex $v \in V_1(\tau)$ and $1 \leq i \leq K$, denote by $k_{v}(i)$ the number of children of $v$ of type $i$. \label{defkvi}

Let $\gimel_{\tau,v}$ be the set of $K$-type rooted trees with root of type $1$, $k_{v}(1)$ leaves of type $1$, no internal vertex of type $1$ except the root, and $k_{v}(i)$ vertices of type $i$ for all $2 \leq i \leq K$. A map associating to each vertex $v \in V_1(\tau)$ an element $\alpha(v)$ of $\gimel_{\tau,v}$ is called a decoration, and a pair $(\tau,\alpha)$ where $\alpha$ is a decoration of $\tau$ is called a decorated tree. We now define a random decoration $\alpha_\tau$ of a tree $\tau$ with $R>0$ type-$1$ vertices $v_1,\dots, v_R$  as follows: $\Prb{\alpha_{\tau}(v_1)=T_1, \ldots, \alpha_\tau(v_R)=T_R} \propto \prod_{i=1}^R \Prb{B_{\varnothing}=T_i}$.

\subsubsection{The blow-up operation}
\label{def:blowupoperation}
We define a blow-up operation $\Phi$ on the set of decorated trees. For a decorated $K$-type tree $(\tau,\alpha)$, the blown-up tree $\Phi(\tau,\alpha)$ is constructed in the following way.
Starting from the root vertex $v_1$, let $k_{v_1}(1)$ be its number of children of type $1$. Delete all edges from $v_1$ to its children, delete its children of type $\geq 2$ and identify $v_1$ with the root of $\alpha(v_1)$. Now denote by $u_1,\ldots,u_{k_{v_1}(1)}$ the leaves of $\alpha(v_1)$ in lexicographic order. We identify them (in their order) with the children of type $1$ of $v_1$ in $\tau$. We recursively perform the same procedure on all vertices of type $1$ in $\tau$. The result $\Phi(\tau,\alpha)$ is then a $K$-type tree. For every vertex $v \in \tau$ of type $1$, we let $\Phi(v) \in \Phi(\tau,\alpha)$ be the vertex of type $1$ identified with $v$ by $\Phi$.  Let $W_v$ denote the vertex set of children of $v$, which are not of type $1$. Now we let $\Phi(w)\in\Phi(\tau,\alpha)$ be a vertex according to an arbitrary type-preserving bijective choice between the elements of $W_v$ and the vertices in $\alpha(v)$, which are not of type $1$.

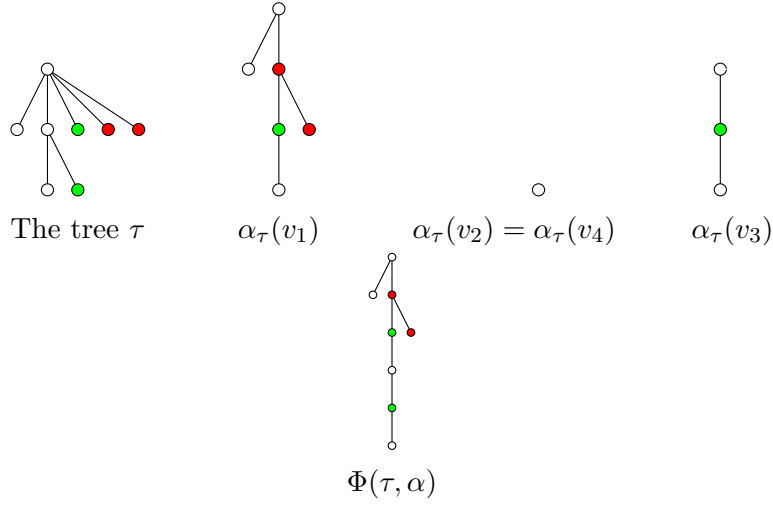
\begin{figure}[!ht]
	\centering
	\begin{tabular}{c c c c c c c}
		
		\begin{tikzpicture}[scale=.8]
			\draw (-0.5,-1) -- (0,0) -- (0,-2) (.5,-1) -- (0,0) -- (1,-1) (1.5,-1) -- (0,0) (.5,-2) -- (0,-1);
			\draw[fill=white] (0,0) circle(.1);
			\draw[fill=white] (0,-1) circle(.1);
			\draw[fill=white] (-.5,-1) circle(.1);
			\draw[fill=white] (0,-2) circle(.1);
			\draw[fill=green] (.5,-1) circle(.1);
			\draw[fill=green] (.5,-2) circle(.1);
			\draw[fill=red] (1.5,-1) circle(.1);
			\draw[fill=red] (1,-1) circle(.1);
		\end{tikzpicture}
		&  
		\begin{tikzpicture}
			\draw[white] (0,0) -- (0.5,0);
		\end{tikzpicture}
		&
		\begin{tikzpicture}[scale=.8]
			\draw (-0.5,-1) -- (0,0) -- (0,-2) --(0,-3) (0,-1)--(0.5,-2);
			\draw[fill=white] (0,0) circle(.1);
			\draw[fill=red] (0,-1) circle(.1);
			\draw[fill=white] (-.5,-1) circle(.1);
			\draw[fill=green] (0,-2) circle(.1);
			\draw[fill=white] (0,-3) circle(.1);
			\draw[fill=red] (0.5,-2) circle(.1);
		\end{tikzpicture} 
		&  
		\begin{tikzpicture}
			\draw[white] (0,0) -- (0.5,0);
		\end{tikzpicture}
		&
		\begin{tikzpicture}[scale=.8]
			\draw[fill=white] (0,0) circle(.1);
		\end{tikzpicture} 
		&  
		
		\begin{tikzpicture}[scale=.8]
			\draw (1.5,0) --(1.5,-2);
			\draw[fill=white] (1.5,0) circle(.1);
			\draw[fill=green] (1.5,-1) circle(.1);
			\draw[fill=white] (1.5,-2) circle(.1);
			\draw[white] (1.5,0) -- (2,0);
		\end{tikzpicture} 
		\\
		The tree $\tau$   &  
		\begin{tikzpicture}
			\draw[white] (0,0) -- (0.5,0);
		\end{tikzpicture} & 
		$\alpha_\tau(v_1)$  &  
		\begin{tikzpicture}
			\draw[white] (0,0) -- (0.5,0);
		\end{tikzpicture} & 
		$\alpha_\tau(v_2)=\alpha_\tau(v_4)$
		\begin{tikzpicture}
			\draw[white] (0,0) -- (.5,0);
		\end{tikzpicture} & 
		$\alpha_\tau(v_3)$ 
	\end{tabular}

	\begin{tabular}{c}
		
		\begin{tikzpicture}[scale=.5]
			\draw (-0.5,-1) -- (0,0) -- (0,-2) --(0,-5) (0,-1)--(0.5,-2);
			\draw[fill=white] (0,0) circle(.1);
			\draw[fill=red] (0,-1) circle(.1);
			\draw[fill=white] (-.5,-1) circle(.1);
			\draw[fill=green] (0,-2) circle(.1);
			\draw[fill=white] (0,-3) circle(.1);
			\draw[fill=red] (0.5,-2) circle(.1);
			\draw[fill=green] (0,-4) circle(.1);
			\draw[fill=white] (0,-5) circle(.1);
		\end{tikzpicture}\\
		$\Phi(\tau,\alpha)$
	\end{tabular}
	\caption{Top left: a flat tree $\tau$. Its vertices of type $1$ are drawn in white, those of type $2$ in green, and those of type $3$ in red. Top right: a decoration $\alpha$ of $\tau$. Bottom: the blown-up tree $\Phi(\tau,\alpha)$.}
	\label{fig:blown up tree}
	
\end{figure}
The main observation is that the tree  $\Phi(\ctT_n,\alpha_{\ctT_n})$ is distributed as $\cT_n$. For convenience, we will often write $\Phi(\ctT_n)$ instead of $\Phi(\ctT_n,\alpha_{\ctT_n})$ and $\alpha_n$ instead of $\alpha_{\ctT_n}$.

\begin{lemma}
	For a finite decorated rooted tree $(\tau,\alpha_{\tau})$ with a marked vertex $u$ of type $1$ and height $\ell$, it holds that
	\begin{align*}
		\Prb{(\ctTl,\alpha_{\ctTl})=(\tau,\alpha_{\tau})}= \Prb{(\ctT,\alpha_{\ctT})=(\hat{\tau},\alpha_{\hat{\tau}})},
	\end{align*}
	where $\hat{\tau}$ is the unmarked tree obtained from $\tau$ when dropping the marking on the spine, and $\alpha_{\hat{\tau}}$ is the same decoration as $\alpha_\tau$.
\end{lemma}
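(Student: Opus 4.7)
The plan is to prove the lemma by unrolling the recursive construction of $\ctTl$ along the spine $v_0, \ldots, v_\ell$ of $\tau$ (where $v_\ell = u$ is the marked vertex), and observing that the size-biasing of the offspring distribution is exactly cancelled by the uniform choice of which type-$1$ child to identify with the next spine vertex. The decorations decouple from this calculation because, conditionally on the shape $\hat\tau$, the random decoration $\alpha_{\hat\tau}$ is by definition a product of independent local decorations that depend only on the local offspring counts $(k_v(1),\dots,k_v(K))$; the same product appears in $\alpha_\tau$, so I can focus on the ``shape'' part of the identity and append the decoration factor at the very end.

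More concretely, I would first write
\begin{align*}
    \Prb{\ctTl = \tau} = \prod_{i=0}^{\ell-1} \Bigl[ k_{v_i}(1) \cdot \Prb{(\txi_1,\dots,\txi_K) = (k_{v_i}(1),\dots,k_{v_i}(K))} \cdot \tfrac{1}{k_{v_i}(1)} \Bigr] \cdot P_{\mathrm{off}},
\end{align*}
where the bracketed factor records, at each internal spine vertex $v_i$, the size-biased offspring probability multiplied by the chance $1/k_{v_i}(1)$ that the uniformly chosen type-$1$ child is precisely the one labelled $v_{i+1}$ in $\tau$. The remaining factor $P_{\mathrm{off}}$ is the joint probability that (a) the subtree rooted at $v_\ell$ (built from $\ctT^{\flat(0)}=\ctT$) agrees with the corresponding subtree of $\tau$ and (b) every non-spine type-$1$ child of a spine vertex $v_i$ starts an independent copy of $\ctT$ matching the corresponding subtree of $\hat\tau$. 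A short induction on the depth of each off-spine type-$1$ vertex, using only the definition of a $\bzeta^\flat$-\bgw tree, gives
\begin{equation*}
    P_{\mathrm{off}} = \prod_{\substack{v \in \tau \text{ of type } 1 \\ v \notin \{v_0,\dots,v_{\ell-1}\}}} \Prb{(\txi_1,\dots,\txi_K)=(k_v(1),\dots,k_v(K))}.
\end{equation*}

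The cancellation $k_{v_i}(1)\cdot \tfrac{1}{k_{v_i}(1)} = 1$ at every spine vertex then collapses the spine product, yielding
\begin{equation*}
    \Prb{\ctTl = \tau} = \prod_{v \in \tau \text{ of type } 1} \Prb{(\txi_1,\dots,\txi_K)=(k_v(1),\dots,k_v(K))} = \Prb{\ctT = \hat\tau},
\end{equation*}
since $\hat\tau$ has the same set of type-$1$ vertices and offspring counts as $\tau$. Finally, I would note that, conditionally on the shape, the decoration $\alpha_{\ctTl}$ is constructed exactly like $\alpha_{\ctT}$ — independently at each type-$1$ vertex, with a law depending only on $(k_v(1),\dots,k_v(K))$ — so $\Prb{\alpha_{\ctTl}=\alpha_\tau \mid \ctTl=\tau} = \Prb{\alpha_{\ctT}=\alpha_{\hat\tau}\mid \ctT=\hat\tau}$, which multiplied with the shape identity gives the claim.

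The only subtle point is bookkeeping: one must be careful that the ``uniform choice'' of the next spine child is really a choice among labelled children in lexicographic order (so that the factor is $1/k_{v_i}(1)$ and not, say, $1/|\{\text{type-}1\text{ children}\}|$ with some other convention), and that the marked vertex $v_\ell$ at the end of the spine contributes no size-biasing factor since $\ctT^{\flat(0)}$ is by definition just $\ctT$ marked at its root. Once these conventions are fixed, the proof is essentially a telescoping identity. I would therefore allot most of the writing to setting up notation for the offspring counts along and off the spine, and only one line to the cancellation itself.
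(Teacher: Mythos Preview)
Your proof is correct and follows essentially the same approach as the paper: the key observation that the size-biasing factor $k_{v_i}(1)$ at each spine vertex is exactly cancelled by the $1/k_{v_i}(1)$ from the uniform choice of the next spine child, reducing $\Prb{\ctTl=\tau}$ to $\Prb{\ctT=\hat\tau}$, and then noting that the decoration law depends only on the underlying tree. The paper phrases the cancellation slightly more conceptually (``the result of forgetting which vertex of $\ctTl$ is marked is distributed like $\ctT$''), whereas you write the product out explicitly, but the argument is the same.
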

\begin{proof}
	Let $(\tau,v)$ be a flat $K$-type rooted tree $\tau$ together with a marked type $1$ vertex $v$ at height $\ell$.
	We first observe
	\begin{align}
		\label{eq:firststep}
	\Prb{\ctTl=(\tau, v)} = \Prb{\ctT= \tau}.
	\end{align}
	To see this, note that the only difference between $\ctT$ and $\ctTl$ is that in $\ctTl$ the vertices $v_0, \ldots, v_{\ell-1}$ of the spine receive offspring according to independent copies of the  size-biased random variable $(\txi_1^{\bullet}, \ldots, \txi_K^{\bullet})$ instead of $(\txi_1, \ldots, \txi_K)$. For each $0 \le i < \ell$ the vertex $v_{i+1}$ gets chosen uniformly at random among the type $1$ children of $v_i$. This way, the spine and  hence the marked vertex $v_\ell$ is determined. Note that for $a_1, \ldots, a_K \in \mathbb{N}_0$, the probability for $v_i$ to have precisely $a_j$ children of type $j$ for all $1 \le j \le K$, and for the uniform choice of $v_{i+1}$ among the $a_1$ type $1$ children to equal a fixed outcome is given by
	\[
	\frac{1}{a_1} \Prb{(\txi_1^{\bullet}, \ldots, \txi_K^{\bullet})=(a_1, \ldots, a_K)}= \Prb{(\txi_1,\ldots, \txi_K)=(a_1,\ldots, a_K)}.
	\]
	Therefore, the result of forgetting which vertex of $\ctTl$ is marked is distributed like $\ctT$, and Equation~\eqref{eq:firststep} follows. 
	
	Since the distribution of the decoration only depends on the underlying tree, the proof is complete.
\end{proof}

We now use this result on the conditioned tree. For every $\varepsilon,c>0$, $n \geq 1$, we let $E_{n,\varepsilon,c}$ be the event that there exists a vertex $v \in \ctT_n$ with height $h_{\ctT_n}(v)\geq \ln(n)^4$ in $\ctT_n$, and height $h_{\Phi(\ctT_n)}(\Phi(v))\notin [c(1-\varepsilon)h_{\ctT_n}(v),c(1+\varepsilon)h_{\ctT_n}(v)]$ in $\Phi(\ctT_n)$.

\begin{lemma}
	\label{lem:rareevent}
	There exist constants $c_{\varsigma}$ and $A',B>0$ such that, for all $n \geq 1$ and $0<\varepsilon<1$, we have that $\Prb{E_{n,\varepsilon,c_\varsigma}}<A'n^{7/2-B\varepsilon^2\ln(n)}$.
\end{lemma}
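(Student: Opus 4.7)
The plan is to use the many-to-one identity of the preceding lemma to transfer the event $E_{n, \varepsilon, c_\varsigma}$ into a statement about the marked vertex of the size-biased tree $\ctTl$, and then to show that the blow-up height along the spine decomposes as a sum of iid increments to which Lemma \ref{DeviationLemma} applies.

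I would first set up the spinal decomposition of $\Phi$. Given a decorated tree $(\tau, \alpha_\tau)$ with a distinguished type-$1$ vertex $v$ at height $\ell$ and ancestral path $\varnothing = v_0, v_1, \ldots, v_\ell = v$, the construction of $\Phi$ places $\Phi(v_{i+1})$ as a type-$1$ leaf of the decoration $\alpha_\tau(v_i)$, so heights telescope:
\begin{align*}
h_{\Phi(\tau, \alpha_\tau)}(\Phi(v)) = \sum_{i=0}^{\ell - 1} X_i, \qquad X_i := h_{\alpha_\tau(v_i)}(\Phi(v_{i+1})).
\end{align*}
In $\ctTl$ paired with its canonical decoration, the variables $X_0, \ldots, X_{\ell - 1}$ are iid: at each spinal step the offspring vector has law $(\txi_1^\bullet, \ldots, \txi_K^\bullet)$, the decoration of $v_i$ is then drawn from $\gimel_{\tau, v_i}$ independently of the past, and $v_{i+1}$ is chosen uniformly among the type-$1$ children of $v_i$, independently across $i$. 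Since $X_i$ is bounded by the size of $\alpha_\tau(v_i)$, hence by $1 + \txi_1^\bullet + \cdots + \txi_K^\bullet$, the common law $\varsigma$ inherits finite exponential moments from $\bm{\txi}$. I set $c_\varsigma := \mathbb{E}[\varsigma]$.

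Next, Lemma \ref{DeviationLemma} applied with exponential parameter $\varrho = r/\sqrt{\ell}$ for a sufficiently small $r > 0$ yields constants $A_0, B_0 > 0$ such that, for all $\ell \ge 1$ and $\varepsilon > 0$,
\begin{align*}
\Prb{\left| \sum_{i=0}^{\ell - 1} X_i - c_\varsigma \ell \right| \ge c_\varsigma \varepsilon \ell} \le A_0 \exp\bigl(-B_0 c_\varsigma \varepsilon \sqrt{\ell}\bigr).
\end{align*}
Combining the many-to-one identity of the preceding lemma with the polynomial decay $\Prb{\#_\blambda \ctT = n} = \Theta(n^{-3/2})$ from \eqref{poldecay}, the expected number of type-$1$ vertices of $\ctT_n$ at height $\ell$ whose blow-up image lies outside $[c_\varsigma (1 - \varepsilon) \ell, c_\varsigma (1 + \varepsilon) \ell]$ is at most $O(n^{3/2}) A_0 \exp(-B_0 c_\varsigma \varepsilon \sqrt{\ell})$. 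A union bound (via first moment) over $\ell \ge \lfloor \ln^4 n \rfloor$, with the resulting sum dominated by its smallest term up to a polynomial prefactor, yields
\begin{align*}
\Prb{E_{n, \varepsilon, c_\varsigma}} \le O(n^{5/2}) \exp\bigl(-B_0 c_\varsigma \varepsilon \ln^2 n\bigr) = O(n^{5/2}) n^{-B_0 c_\varsigma \varepsilon \ln n},
\end{align*}
and the polynomial prefactor is absorbed by decreasing the constant $B_0 c_\varsigma$ slightly, giving the claimed form $A' n^{- B \varepsilon \ln n}$.

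The main technical hurdle is justifying the iid decomposition in Step 1: one must verify that the conditional law of $\alpha_\tau(v_i)$ depends only on the offspring vector of $v_i$, so that $X_i$ has a fixed conditional distribution, and that the recursive construction of $\ctTl$ makes the triples (size-biased offspring vector, decoration, uniform choice of next spinal child) independent across spinal levels. Once this is in place, the Cram\'er estimate and the many-to-one transfer are entirely standard.
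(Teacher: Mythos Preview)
Your approach is essentially the same as the paper's: both transfer to $\ctTl$ via the many-to-one identity, decompose the blow-up height along the spine as a sum of i.i.d.\ increments with law $\varsigma$, apply the deviation inequality, and sum over heights $\ell \ge \ln^4 n$. One small difference is that the paper caps the sum over $\ell$ at $\#_1(\ctT_n)+1$ and invokes the concentration of $\#_1(\ctT_n)$ (Proposition~\ref{prop:concentration}(i)) to bound the number of terms by $O(n)$, whereas you use the convergence of the infinite series $\sum_{\ell \ge L} e^{-c\varepsilon\sqrt{\ell}}$; your route is arguably cleaner, though you should note that the implicit constant in ``polynomial prefactor'' depends on $\varepsilon$ through a factor $1/\varepsilon^2$, which needs to be absorbed into the exponent.

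There is one omission: the event $E_{n,\varepsilon,c_\varsigma}$ ranges over all vertices $v\in\ctT_n$, not only type-$1$ vertices, but your spinal argument (and the formal definition of $\Phi(v)$) only covers type~$1$. The paper closes this gap in a final sentence by invoking Lemma~\ref{lem:largestoutdegree} to bound the distance from any vertex to its nearest type-$1$ ancestor by $O(\log n)$, so the height discrepancy for non-type-$1$ vertices is absorbed into the $\varepsilon$-window once $h \ge \ln^4 n$.
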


\begin{proof}
	Fix $c,\varepsilon>0$ and $0<\delta<1/2$. For $\ell,n \geq 1$, let $\mathcal{E}_{\ell,n,c}$ be the set of flat marked decorated $K$-trees $(\tau,\alpha_{\tau})$ such that $\#_{\blambda}(\tau)=n$, with a marked vertex $w_{\tau}$ of type $1$ at height $\ell$, such that the height of $\Phi(w_{\tau})$ in $\Phi((\tau,\alpha_{\tau}))$ is not in the interval $[c(1-\varepsilon)\ell,c(1+\varepsilon)\ell]$. Recall that for $\tau$ a marked tree, $\hat{\tau}$ denotes the unmarked version of it.
	
	Let $m:=\#_1(\ctT_n)$ be the (random) number of vertices of type $1$ in the tree $\ctT_n$. Setting $g_n := \lceil \frac{n}{c_1}+n^{\frac{1}{2}+\delta} \rceil$, we have
	\begin{align*}
		\Prb{E_{n,\varepsilon,c}}\leq\Prb{E_{n,\varepsilon,c}, m\leq g_n}+\Prb{m > g_n}.
    \end{align*}
    We can bound
    \begin{align*}
        \Prb{E_{n,\varepsilon,c}, m\leq g_n} &\leq\sum\limits_{\ell=\lfloor \ln^4(n)\rfloor}^{ g_n+1}\sum\limits_{(\tau,\alpha_{\tau})\in\mathcal{E}_{\ell,n,c} }\Prb{(\ctT_n,\alpha_{\ctT_n})=(\hat{\tau},\alpha_{\hat{\tau}})}\\
		&=\sum\limits_{\ell=\lfloor \ln^4(n)\rfloor}^{g_n+1}\sum\limits_{(\tau,\alpha_\tau)\in\mathcal{E}_{\ell,n,c} }\Prb{(\ctT,\alpha_{\ctT})=(\hat{\tau},\alpha_{\hat{\tau}})|\#_{\blambda}\ctT=n}\\
		&=\sum\limits_{\ell=\lfloor \ln^4(n)\rfloor}^{ g_n+1}\sum\limits_{(\tau,\alpha_{\tau})\in\mathcal{E}_{\ell,n,c} }\frac{\Prb{(\ctT,\alpha_{\ctT})=(\hat{\tau},\alpha_{\hat{\tau}}),\#_{\blambda}\ctT=n}}{\Prb{\#_{\blambda}\ctT=n}}\\
		&\leq \sum\limits_{\ell=\lfloor \ln^4(n)\rfloor}^{g_n+1}\sum\limits_{(\tau,\alpha_{\tau})\in\mathcal{E}_{\ell,n,c} }\frac{\Prb{(\ctT,\alpha_{\ctT})=(\hat{\tau},\alpha_{\hat{\tau}})}}{\Prb{\#_{\blambda}\ctT=n}}\\
		&=O(n^{\frac{3}{2}}) \sum\limits_{\ell=\lfloor \ln^4(n)\rfloor}^{ g_n+1}\sum\limits_{(\tau,\alpha_{\tau})\in\mathcal{E}_{\ell,n,c} }\Prb{(\ctTl,\alpha_{\ctTl})=(\tau,\alpha_{\tau})}\\
		&=O(n^{\frac{3}{2}}) \sum\limits_{\ell =\lfloor \ln^4(n)\rfloor}^{g_n+1}\Prb{(\ctTl,\alpha_{\ctTl})\in \mathcal{E}_{\ell,n,c}},
	\end{align*}
	\noindent where the $O(n^{\frac{3}{2}})$ does not depend on $\varepsilon$.
	
    Now recall the construction of $\ctTl$. In particular, $v_1$ is a uniform child of type $1$ of the root. Let $\varsigma$ be distributed like the height of $\Phi(v_1)$ in the blown-up tree, and $(\varsigma^{(i)})_{i \geq 1}$ be i.i.d. copies of $\varsigma$.
	
	Note that since $\txi$ has finite exponential moments, $\varsigma$ has finite exponential moments. Let $c_{\varsigma}$ denote the expected value of $\varsigma$.
	Now we may use Lemma \ref{DeviationLemma} (with $\varrho=\varepsilon c_{\varsigma} / (2D)$ for $D$ as in the notation of Lemma \ref{DeviationLemma}) to deduce that there exist $A,B,\varepsilon_0>0$ such that for all small enough $0<\varepsilon<\varepsilon_0$ we have:
	\begin{align*}
		\Prb{\left|\sum\limits_{i=1}^{\ell}\varsigma^{(i)}-\ell c_{\varsigma}\right|\geq c_{\varsigma}\varepsilon\ell}\leq A\exp\left(-B \ell\varepsilon^2 \right).
	\end{align*}
	By potentially replacing $B$ by a smaller constant we may  without loss of generality assume that $\varepsilon_0=1$. On the other hand, we have $\Prb{m > g_n} \leq A''\exp(-B''n^{\delta})$, for some $A'',B''>0$ by Proposition \ref{prop:concentration} (i). 
	This implies that, for $\tilde{A}>0$:
	\begin{align*}
		\Prb{E_{n,\varepsilon,c_\varsigma}}
		&\leq O(n^{\frac{5}{2}})\sum\limits_{\ell=\lfloor \ln^4(n)\rfloor}^{g_n+1}\tilde{A}\exp\left(-B \ell \varepsilon^2\right)+ A''\exp(-B''n^{\delta}).
	\end{align*}
	Using $n+n^{1/2+\delta}+1\leq 3n$ for $n\geq 1$ there exist constants $A'>0$ (determined by the value of $c_1$) and $B',B''' >0$, such that
	\begin{align*}
		\Prb{E_{n,\varepsilon,c_\varsigma}}&\leq O(n^{\frac{5}{2}}) 3n A' \exp\left(-B'\varepsilon^2 \ln^4(n)\right)+A''\exp(-B''n^{\delta})\\
		&=O(n^{\frac{7}{2}}) n^{-B'\varepsilon^2 \ln(n)}+ A''\exp(-B''n^{\delta})\\
		&\leq O(n^{\frac{7}{2}})n^{-B'''\varepsilon^2 \ln(n)}.
	\end{align*}
	As this bound tends to zero as $n \to \infty$, we have proven the statement of the present lemma for vertices of type $1$. By Lemma~\ref{lem:largestoutdegree} with high probability any vertex of $\ctT_n$ is at distance at most $O(\log n)$ from its youngest type $1$ ancestor in $\cT_n$. Hence the proof is complete for vertices with arbitrary type.
\end{proof}

This finally allows us to control the height of a vertex $\Phi(v)$ with respect to the height of $v$ in the original flattened tree:

\begin{lemma}\label{lem:distance}

	For every $0< \varepsilon<1$ we have with high probability that for every vertex $v\in \ctT_n$ 
	\begin{align*}
		h_{\Phi(\ctT_n)}(\Phi(v))\in [c_{\varsigma}(1-\varepsilon)h_{\ctT_n}(v)+O(\ln^5(n)),c_{\varsigma}(1+\varepsilon)h_{\ctT_n}(v)+O(\ln^5(n))]
	\end{align*}
\end{lemma}
	\begin{proof}
		By Lemma \ref{lem:rareevent}, with high probability, for any $v \in \ctT_n$ such that $h_{\ctT_n}(v) \geq \ln^4(n)$, the statement holds. 
		Now, observe that, by Corollary \ref{cor:sizeoflargestblob}, there exists $C>0$ such that, with high probability, for all $v \in \ctT_n$, $h_{\Phi(\ctT_n)}(\Phi(v)) \leq C \ln(n) h_{\ctT_n}(v)$. In particular, on that event, for all $v \in \ctT_n$ such that $h_{\ctT_n}(v) \leq \ln^4(n)$, $h_{\Phi(\ctT_n)}(\Phi(v)) \leq C \ln^5(n)$. The result follows. 
	\end{proof}

\subsection{Proof of Theorem \ref{thm:mainthm}}

For convenience, for any tree $T$ and any $a>0$, we set: $aT := (T, a d_T, m_T)$.
We can now finally prove Theorem \ref{thm:mainthm}, using the fact that the blown-up tree is distributed like the original tree.

\begin{proposition}
	\label{prop:cvhausdorff1}    
	As $n \rightarrow\infty$, 
	\begin{align*}
		d_{GHP}\left( n^{-1/2} c_{\varsigma} \ctT_n, n^{-1/2} \Phi(\ctT_n) \right) \convp 0.
	\end{align*}
\end{proposition}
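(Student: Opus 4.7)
The plan is to realise the GHP distance by an explicit correspondence together with a coupling of the mass measures, and then bound the distortion using the height identification of Lemma~\ref{lem:distance} combined with the blob-size bound of Corollary~\ref{cor:sizeoflargestblob}. Since the blow-up operation preserves the count of vertices of each type, I fix a bijection $\psi \colon \ctT_n \to \Phi(\ctT_n)$ such that every type-$1$ vertex $v$ of $\ctT_n$ is sent to $\Phi(v)$, and every non-type-$1$ leaf of $\ctT_n$ whose parent is $v$ is matched, by type, to one of the vertices of the blob $B_{\Phi(v)}$ in $\Phi(\ctT_n)$. Setting $R := \{(v,\psi(v)) : v \in \ctT_n\}$ and $\pi := (\mathrm{id},\psi)_\ast m_{\ctT_n}$, the measure $\pi$ has marginals $m_{\ctT_n}$ and $m_{\Phi(\ctT_n)}$, and the standard bound for $d_{GHP}$ via a correspondence supporting a compatible coupling gives
\[
d_{GHP}\bigl(c_\varsigma n^{-1/2}\ctT_n,\, n^{-1/2}\Phi(\ctT_n)\bigr) \le \tfrac{1}{2}\,\mathrm{dis}(R),
\]
where $\mathrm{dis}(R)$ is the distortion with respect to the rescaled metrics $c_\varsigma n^{-1/2} d_{\ctT_n}$ and $n^{-1/2} d_{\Phi(\ctT_n)}$. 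It therefore suffices to prove $\mathrm{dis}(R) \convp 0$.

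Fix $\varepsilon > 0$. First I would reduce to type-$1$ pairs: a non-type-$1$ vertex $u$ of $\ctT_n$ is a leaf with type-$1$ parent $u^{\ast}$, satisfying $d_{\ctT_n}(u,u^{\ast}) = 1$, while $\psi(u)$ lies inside the blob $B_{\Phi(u^{\ast})}$, which has size $O(\log n)$ with high probability by Corollary~\ref{cor:sizeoflargestblob}; after scaling by $n^{-1/2}$, replacing $u$ by $u^{\ast}$ (and similarly for $v$) incurs an error of $O(n^{-1/2}\log n) = o(1)$. For type-$1$ vertices $u,v$, let $w := \mathrm{lca}_{\ctT_n}(u,v)$, which is again of type $1$, and use $d(x,y) = h(x) + h(y) - 2h(\mathrm{lca}(x,y))$ in both trees. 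The key geometric observation is that the path from $\Phi(u)$ to $\Phi(v)$ in $\Phi(\ctT_n)$ enters and leaves the blow-up subtree attached at $\Phi(w)$ (namely $\alpha_{\ctT_n}(w)$), so $\mathrm{lca}_{\Phi(\ctT_n)}(\Phi(u),\Phi(v))$ is exactly the lca within $\alpha_{\ctT_n}(w)$ of the type-$1$ leaves corresponding to the two children of $w$ under which $u$ and $v$ lie; hence it differs in height from $\Phi(w)$ by at most the size of that blob, which is $O(\log n)$ with high probability. Applying Lemma~\ref{lem:distance} to each of $u, v, w$ then yields, with high probability uniformly in $u, v$,
\[
\bigl|d_{\Phi(\ctT_n)}(\Phi(u),\Phi(v)) - c_\varsigma\, d_{\ctT_n}(u,v)\bigr| \le 4 c_\varsigma \varepsilon\, \mathrm{H}(\ctT_n) + O(\log^5 n).
\]

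By Proposition~\ref{pro:goup}, $\mathrm{H}(\ctT_n)/\sqrt{n}$ is tight, so dividing through by $\sqrt{n}$ gives a bound on $\mathrm{dis}(R)$ of the form $4 c_\varsigma \varepsilon \cdot O_{\mathbb{P}}(1) + O(n^{-1/2}\log^5 n)$. Letting $n \to \infty$ and then $\varepsilon \to 0$ gives $\mathrm{dis}(R) \convp 0$, completing the argument. The main obstacle is the lca comparison in the second paragraph, since this is where the geometry of the blow-up must be analysed carefully; once one identifies the lca of two images as the lca within a single decoration subtree and invokes the uniform $O(\log n)$ blob-size bound, everything else reduces to the pointwise height identification already packaged in Lemma~\ref{lem:distance}.
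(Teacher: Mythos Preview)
Your argument is correct and follows essentially the same route as the paper's: both express distances through heights and the last common ancestor, use Corollary~\ref{cor:sizeoflargestblob} to bound the discrepancy between $\Phi(\mathrm{lca}_{\ctT_n}(u,v))$ and $\mathrm{lca}_{\Phi(\ctT_n)}(\Phi(u),\Phi(v))$ by $O(\log n)$, and then apply Lemma~\ref{lem:distance} to each height. Your version is simply more explicit about the correspondence and coupling realising the GHP bound, and about the final $\varepsilon\to 0$ step, which the paper compresses into ``this implies our result''.

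One minor reference issue: Proposition~\ref{pro:goup} is proved in the paper under the additional assumption $\lambda_1\neq 0$, so it does not directly give you tightness of $\mathrm{H}(\ctT_n)/\sqrt{n}$ in general. You should instead invoke Proposition~\ref{prop:ConvFlattened} (GHP convergence of $\ctT_n$), which holds without that restriction and immediately yields tightness of the rescaled height.
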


\begin{proof}
	We have for any $u,v \in \ctT_n$ and the corresponding $\Phi(u), \Phi(v)\in \Phi((\ctT_n,\alpha_n))$.
	\begin{align*}
		d_{\ctT_n}(u,v)=h_{\ctT_n}(u)+h_{\ctT_n}(v)-2h_{\ctT_n}(lca_{\ctT_n}(u,v))
	\end{align*}
	The height of a vertex $\Phi(w)$, where $w=lca_{\ctT_n}(u,v)$ differs from the one of $w'=lca_{\Phi((\ctT_n,\alpha_n))}(\Phi(u),\Phi(v))$ with high probability at most $O(\ln(n))$, i.e.:
	$$\sup_{u,v \in \ctT_n}|h_{\Phi(\ctT_n)}(\Phi(w))-h_{\Phi(\ctT_n)}(w')|=O(\ln(n)).$$
    Indeed, with the same notation, for all $u,v$, the vertex $w'$ is an element of the image of $\alpha_n(w)$ in $\Phi((\ctT_n,\alpha_n))$. Hence, $d_{\Phi(\ctT_n)}(\Phi(w),w')\leq |\alpha_n(w)|=O(\ln n)$ in probability by Corollary \ref{cor:sizeoflargestblob}, uniformly in $u,v$.
	Therefore with high probability we have 
	\begin{align*}
		d_{\Phi(\ctT_n)}(\Phi(u),\Phi(v))&=h_{\Phi(\ctT_n)}(\Phi(u))+h_{\Phi(\ctT_n)}(\Phi(v))\\
		&-2h_{\Phi(\ctT_n)}(\Phi(lca_{\ctT_n}(u,v)))+O(\ln(n)).
	\end{align*}
	By Lemma \ref{lem:distance} we thus have
	\begin{align*}
		\sup_{u,v\in \ctT_n}\frac{|c_{\varsigma}d_{\ctT_n}(u,v)-d_{\Phi(\ctT_n)}(\Phi(u),\Phi(v))|}{\sqrt{n}}\convp 0.
	\end{align*}
	This implies that the rooted Gromov--Hausdorff distance between  $n^{-1/2} c_{\varsigma} \ctT_n$ and $n^{-1/2}\Phi(\ctT_n)$ tends to zero in probability. Since the two trees share the same set of vertices, it follows that the rooted Gromov--Hausdorff--Prokhorov distance with respect to the uniform measure on both trees tends to zero in probability as well. This completes the proof. 
\end{proof}

This allows us to compare the trees $(\ctT_n,d_{\ctT_n},m_{\ctT_n})$ and $(\cT_n,d_{\cT_n},m_{\cT_n})$.

\begin{proposition}
	\label{prop:cvhausdorff}
	As $n \rightarrow\infty$, we have the following convergence in probability:
	\begin{align*}
		d_{GHP}\left( n^{-1/2} c_{\varsigma} \ctT_n, n^{-1/2} \cT_n \right) \rightarrow 0.
	\end{align*}
\end{proposition}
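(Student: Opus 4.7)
The plan is straightforward: this proposition should be essentially a one-line consequence of Proposition~\ref{prop:cvhausdorff1}, combined with the distributional identity $\cT_n \stackrel{(d)}{=} \Phi(\ctT_n, \alpha_{\ctT_n})$ observed just before Proposition~\ref{prop:cvhausdorff1}. The idea is to exhibit a coupling under which $\cT_n$ and $\Phi(\ctT_n)$ coincide almost surely, and then transport the convergence from the latter to the former.

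Concretely, I would work under the natural coupling between $\cT_n$ and $\ctT_n$ induced by the flattening operation of Section~\ref{ssec:operations}: $\ctT_n$ is obtained directly from $\cT_n$ by applying the flattening map. Under this coupling, the conditional distribution, given $\ctT_n$, of the family of blobs of $\cT_n$ attached at each type-$1$ vertex agrees precisely with the distribution of the random decoration $\alpha_{\ctT_n}$ (each blob is conditionally independent and distributed according to the appropriate law of the unconditioned \bgw tree $\cT$, which is exactly how $\alpha_{\ctT_n}$ was defined). Hence the flattening coupling and the blow-up coupling produce the same joint law on $(\ctT_n, \cT_n)$, and we may take $\cT_n = \Phi(\ctT_n, \alpha_{\ctT_n})$ almost surely.

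With this identification, the random variable
\[
d_{GHP}\bigl(n^{-1/2}c_\varsigma \ctT_n,\, n^{-1/2}\cT_n\bigr)
\]
coincides almost surely with
\[
d_{GHP}\bigl(n^{-1/2}c_\varsigma \ctT_n,\, n^{-1/2}\Phi(\ctT_n)\bigr),
\]
and the latter converges to $0$ in probability by Proposition~\ref{prop:cvhausdorff1}, which is exactly what we need. I do not anticipate any real obstacle here: the only point to verify carefully is that the flattening coupling and the blow-up coupling induce the same joint distribution on $(\ctT_n, \cT_n)$, and this is immediate from the definition of $\alpha_{\ctT_n}$ together with the description of $\cT_n$ as a $\bzeta$-\bgw tree conditioned on $\#_{\blambda}(\cT)=n$.
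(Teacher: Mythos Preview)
Your proposal is correct and follows essentially the same approach as the paper. The paper's version is marginally slicker: rather than verifying that the conditional law of the blobs given $\ctT_n$ matches the law of the decoration $\alpha_{\ctT_n}$, it simply observes that since $\ctT_n=(\cT_n)^{\flat}$ under the flattening coupling and $(\Phi(\ctT_n))^{\flat}=\ctT_n$ almost surely under the blow-up coupling, the pair $(\ctT_n,\cT_n)$ is the image of $\cT_n$ under the deterministic map $T\mapsto(T^{\flat},T)$, and likewise $(\ctT_n,\Phi(\ctT_n))$ is the image of $\Phi(\ctT_n)$ under the same map; equality in distribution of $\cT_n$ and $\Phi(\ctT_n)$ then immediately gives equality in distribution of the two pairs, hence of the $d_{GHP}$ values.
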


\begin{proof}
	
	Using that $\Phi(\ctT_n)$ is distributed as $\cT_n$, we get from Proposition \ref{prop:cvhausdorff1} that, in distribution:
	\begin{equation*}
		d_{GHP}\left( n^{-1/2} c_{\varsigma} \ctT_n, n^{-1/2} \cT_n \right) = d_{GHP}\left( n^{-1/2} c_{\varsigma} (\Phi(\ctT_n))^{\flat}, n^{-1/2} \Phi(\ctT_n) \right),
	\end{equation*}
	where we recall that $T^{\flat}$ stands for the flattened version of $T$.
	Now, observe that, almost surely,
	\begin{equation*}
		(\Phi(\ctT_n))^{\flat} = \ctT_n.
	\end{equation*}
	The result follows.
\end{proof}

We finally obtain a proof of our main theorem.

\begin{proof}[Proof of Theorem \ref{thm:mainthm}]
	By Proposition \ref{prop:ConvFlattened}, we have that, for 
	\begin{equation}
	%	\label{eq:kappatree}
		\kappa_{tree}:=\frac{1}{2} \sqrt{\mathbb{V}[\txi_1] \, \sum\limits_{i=1}^K \lambda_i \mathbb{E}[\txi_i]},
	\end{equation}
	\noindent the following convergence holds in distribution for the rooted Gromov--Hausdorff--Prokhorov distance:
	\begin{align}
		\label{eq:convergence in the last proof}
		(\ctT_n,\kappa_{tree}n^{-\frac{1}{2}}d_{\ctT_n},m_{\ctT_n})\convd (\mathcal{T}_{\be},d_{\mathcal{T}_{\be}},m_{\mathcal{T}_{\be}}).
	\end{align}
	Now, by Proposition \ref{prop:cvhausdorff} it holds:  
	\begin{align*}
		d_{GHP}\left( c_{\varsigma} n^{-1/2} \ctT_n, n^{-1/2} \cT_n \right) \rightarrow 0,
	\end{align*}
	where $c_{\varsigma}>0$ is the expected height of the vertex $\Phi(v_1)$, see Lemma \ref{lem:rareevent}. This entails
	\begin{align*}
		d_{GHP}\left( n^{-1/2} \kappa_{tree} \ctT_n, n^{-1/2} \kappa_{tree} \frac{1}{c_{\varsigma}}\cT_n \right) \rightarrow 0,
	\end{align*}
	\noindent and the result follows from \eqref{eq:convergence in the last proof}. In particular, the scaling constant in the convergence of Theorem \ref{thm:mainthm} is given by
	\begin{align*}
		c_{\mathrm{scal}} = \frac{1}{c_{\varsigma}} \kappa_{tree}=\frac{1}{2 c_{\varsigma}}
		\sqrt{\mathbb{V}[\txi_1]\sum\limits_{i=1}^K \lambda_i \mathbb{E}\left[\tilde{\xi}_i\right]}=\frac{1}{2 c_{\varsigma}}
		\sqrt{c_1\mathbb{V}[\txi_1]}.
	\end{align*}

The last thing to prove is that we have the equality
\begin{align*}
c_{\mathrm{scal}}=\frac{\sigma}{2} \sqrt{\sum_{i=1}^K \lambda_i a_i},
\end{align*}
with $\sigma$ given in Equation~\eqref{eq:sigma} and $(a_1, \ldots, a_K)$ is the $1$-left eigenvector of the mean matrix $M$ with positive coordinates and such that $\sum_{i=1}^K a_i = 1$. To this end, first observe that, by construction, the constant $c_{\varsigma}$ is independent of $\blambda$. Hence, recalling that $\mathbb{E}\left[\tilde{\xi}_1\right]=1$ (Lemma \ref{lem:meanexpomoments}) and taking $\blambda:=(1,0,\ldots,0)$, Miermont's result \cite{Mie08}, Theorem 2 implies:
\begin{align}
	\label{eq:mimi}
    \frac{1}{2c_{\varsigma}} \sqrt{\mathbb{V}[\txi_1]} = \frac{\sigma \sqrt{a_1}}{2}.
\end{align}

On the other hand, \cite{Ste18}, Proposition 2.1 (ii) (see also \cite{Mie08}, Proof of Proposition 4 (ii)) shows that, for all $i \in \{1,\ldots,K\}$, $\mathbb{E}[\txi_i]=\frac{a_i}{a_1}$. Hence, we get
\begin{align*}
    c_{\mathrm{scal}} &= \frac{1}{2 c_{\varsigma}}
		\sqrt{\mathbb{V}[\txi_1]\sum\limits_{i=1}^K \lambda_i \mathbb{E}\left[\tilde{\xi}_i\right]}\\
        &= \frac{\sigma \sqrt{a_1}}{2} \sqrt{\sum_{i=1}^K \lambda_i \frac{a_i}{a_1}}\\
        &= \frac{\sigma}{2} \sqrt{\sum_{i=1}^K \lambda_i a_i}.
\end{align*}

\end{proof}

\subsection{Proof of Theorem~\ref{te:bound}}

We are now ready to finalize the proof of Theorem~\ref{te:bound}.
\begin{proof}[Proof of Theorem~\ref{te:bound}]
		It suffices to verify existence of constants $C,c>0$ such that ~\eqref{eq:tebound} holds uniformly for all $x>\sqrt{n}$. 
This is because we may replace $C$ by a larger positive constant and $c$ by a smaller positive constant (so that $C \exp(-c) > 1$), ensuring that~\eqref{eq:tebound} is trivially satisfied for $x \le \sqrt{n}$.
		
		Let $\mathcal{E}_{\ell,n}$ denote the collection of all flat marked decorated $K$-type trees $(\tau, \alpha_\tau)$ with $\#_{\bm{\lambda}} \tau = n$ with a marked vertex $w_\tau$ at height $\ell$ such that there exists at least one vertex $v$ in $\tau$ with the following two properties:
		\begin{itemize}
			\item[(i)] $v =w_\tau$ or $v$ is a child of $w_\tau$ with arbitrary type,
			\item[(ii)] the height of $\Phi(v)$ in  $\Phi((\tau, \alpha_\tau))$ is larger than $x$.
		\end{itemize}		
			With foresight, set
		\begin{align}
			\label{eq:choiceofalpha}
		\alpha = \frac{1}{4 \sum_{j=1}^K \Ex{\tilde{\xi}^\bullet_j}}.
		\end{align}
This way,
		\begin{align}
			\label{eq:s0}
			\Prb{\mathrm{H}(\cT_n)>x} \le \Prb{\mathrm{H}(\ctT_n) > \alpha x} +\sum_{0 \le \ell \le \alpha x}\Prb{(\ctT_n,\alpha_{\ctT_n})\in \mathcal{E}_{\ell,n}}.
		\end{align}
			By Proposition~\ref{pro:goup} it follows that there exist constants $C_1, c_1 >0$
		such that for all $n$ 
		\begin{align}
			\label{eq:s1}
					\Prb{\mathrm{H}(\ctT_n)\geq x\alpha }\leq C_1\exp(-c_1 \alpha^2 x^2/n) + C_1\exp(-c_1\alpha x).
		\end{align}
	Arguing analogously as in the proof of Lemma~\ref{lem:rareevent}, we have
		\begin{align*}
			\sum_{ 0 \le \ell \le \alpha x} \Prb{(\ctT_n,\alpha_{\ctT_n})\in \mathcal{E}_{\ell,n}} &\le  O(n^{\frac{3}{2}}) \sum_{0 \le \ell \le \alpha x}\Prb{(\ctTl,\alpha_{\ctTl})\in \mathcal{E}_{\ell,n}}.		
		\end{align*}
	For any $(\tau, \alpha_\tau) \in \mathcal{E}_{\ell,n}$ any vertex $v$ of $\tau$ satisfying (i) and (ii) above we have that the height  of $\Phi(v)$ in $\Phi((\tau, \alpha_\tau))$ is bounded by the sum of the number of children of all its ancestors. Hence
	\begin{align*}
		\Prb{(\ctTl,\alpha_{\ctTl})\in \mathcal{E}_{\ell,n}} &\le \Prb{Y + \sum_{i=1}^\ell X_i >x}\\
		 &\le \Prb{Y > x/2} + \Prb{\sum_{i=1}^\ell X_i >x/2}
	\end{align*}
	with $Y := \sum_{j=1}^K \tilde{\xi}_j$ and $(X_i)_{i \ge 1}$ denoting independent copies of the random variable $X := \sum_{j=1}^K \tilde{\xi}^\bullet_j$. Using the assumption that $x>\sqrt{n}$ and the fact that $Y$ has finite exponential moments, it follows that there exist $C_2, c_2>0$ with
	\begin{align}
		\label{eq:s2}
	O(n^{3/2}) \Prb{Y > x/2} \le C_2 \exp(-c_2 x).
	\end{align}
	Using Lemma~\ref{DeviationLemma} and $\ell \le \alpha x$ we obtain that there exist $r,D>0$ such that for all $0 \le \varrho \le r$
	\begin{align*}
		O(n^{3/2}) \Prb{\sum_{i=1}^\ell X_i >x/2} &= O(n^{3/2}) \exp(D\ell \varrho^2-\varrho (x/2 - \ell \mathbb{E}[X])) \\
		&\le  O(n^{3/2}) \exp( x\varrho (D \varrho \alpha + \alpha\mathbb{E}[X] - 1/2) ).
	\end{align*}
	By~\eqref{eq:choiceofalpha} we may select $\varrho>0$ sufficiently small so that $D \varrho \alpha + \alpha\mathbb{E}[X] - 1/2 <0$. Using the assumption that $x>\sqrt{n}$, it then follows that
	\begin{align}
		\label{eq:s3}
		O(n^{3/2}) \Prb{\sum_{i=1}^\ell X_i >x/2} &\le C_3 \exp(-c_3 x)
	\end{align}
	for constants $C_3, c_3>0$ that do not depend on $n$ or $x$. Combining the inequalities~\eqref{eq:s0},~\eqref{eq:s1},~\eqref{eq:s2}, and~\eqref{eq:s3} it follows that there exist $C,c>0$ with
	\begin{align*}
				\Prb{\mathrm{H}(\cT_n) > x } \le C \exp(-cx^2 /n ) + C\exp(-cx).
	\end{align*}
	This completes the proof.
\end{proof}

\section{The reducible case}
\label{sec:reducible}

\subsection{Preparations}

The aim of this section is to prove Theorem~\ref{thm:reducible}. We hence assume the conditions on $\bm{\zeta}$ stated there are satisfied.

We first verify a statement analogous to Lemma~\ref{lem:meanexpomoments}. All auxiliary random variables are defined analogously as in the irreducible case. We let $\rho(\cdot)$ denote the spectral radius.

\begin{lemma}
	\label{lem:aa}
	We have $\mathbb{E}[\txi_1]=1$, $(\txi_1,\ldots, \txi_{K+K'})$ has finite exponential moments, and $\Prb{\txi_1=0}>0$.
\end{lemma}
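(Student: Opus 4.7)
The plan is to mirror the irreducible argument of Lemma~\ref{lem:meanexpomoments} by analyzing the blob $B_\varnothing$ as a random subtree, using crucially that removing row and column $1$ from $A$ yields a subcritical matrix.

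First I would establish finite exponential moments. Write $\hat{A}$ for the $(K+K'-1) \times (K+K'-1)$ matrix obtained from $A$ by deleting row and column~$1$, which has the block form
\[
\hat{A} = \begin{pmatrix} \hat{M} & \hat{S} \\ 0 & M' \end{pmatrix},
\]
where $\hat{M}$ is $M$ with its first row and column removed. Since $M$ is irreducible with $\rho(M)=1$, a standard Perron--Frobenius argument shows $\rho(\hat{M}) < 1$, and combined with $\rho(M')<1$ this gives $\rho(\hat{A}) = \max(\rho(\hat{M}), \rho(M')) < 1$. The blob $B_\varnothing$ can be described as follows: the root has offspring according to $\zeta^{(1)}$; each type-$1$ child is ``frozen'' (counted as a leaf of the blob); each child of type $j \neq 1$ starts a multitype branching process with offspring means $\hat{A}$ in which type-$1$ descendants are again frozen. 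Since $\rho(\hat{A})<1$ and $\bm{\zeta}$ has finite exponential moments, a standard argument (e.g.\ via the smallest positive root of an implicit equation for the generating function, or via direct exponential-moment bounds for subcritical multitype branching) gives that $(\#_1 B_\varnothing, \ldots, \#_{K+K'} B_\varnothing)$ has finite exponential moments, hence so does $\btxi = (\txi_1,\ldots,\txi_{K+K'})$.

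Next I would compute $\mathbb{E}[\txi_1]$ algebraically. With $v = (A_{1,j})_{j \neq 1}$ (a row vector) and $w = (A_{j,1})_{j \neq 1}^\top$ (a column vector, whose last $K'$ entries are $0$ because of the block structure of $A$), a first-moment calculation on the frozen branching process above gives
\[
\mathbb{E}[\txi_1] = A_{1,1} + v(I - \hat{A})^{-1} w.
\]
Because $\rho(A) = \max(\rho(M),\rho(M')) = 1$ with $\rho(M')<1$, the right $1$-eigenvectors of $A$ are of the form $b = (\tilde b, 0)^\top$ with $\tilde b$ the Perron eigenvector of $M$; in particular $b_1 = \tilde b_1 > 0$. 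Writing $Ab = b$ in block form and using the second block equation to eliminate $\hat b$ in the first block equation yields precisely $A_{1,1} + v(I - \hat{A})^{-1} w = 1$, so $\mathbb{E}[\txi_1] = 1$.

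Finally, for $\Prb{\txi_1 = 0} > 0$ I would argue by contradiction: if $\Prb{\txi_1 = 0} = 0$ then $\txi_1 \ge 1$ a.s., and combined with $\mathbb{E}[\txi_1] = 1$ this forces $\txi_1 = 1$ a.s. Since the reduced tree is a monotype Bienaym\'e tree with offspring distribution equal to the law of $\txi_1$, it would then be an infinite path a.s., so $\cT$ would contain infinitely many type-$1$ vertices with positive probability. This contradicts a.s.\ extinction of $\cT$, which follows from $\rho(M) = 1$ (and irreducibility plus the non-degeneracy hypothesis that some type in $[K]$ has a positive probability of having more than one child, preventing the deterministic fixed-point scenario) together with $\rho(M') < 1$ for the subcritical component. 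The main obstacle is making sure this final non-degeneracy step is airtight under exactly the stated hypotheses: one must check that the assumption ``some $i \in [K]$ has positive probability of having no offspring in $[K]$'' together with irreducibility of $M$ suffices to rule out the deterministic-path alternative and hence guarantee a.s.\ extinction of $\cT$.
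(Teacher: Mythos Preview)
Your proof is correct. The argument for finite exponential moments is essentially the paper's: both of you observe that the process obtained by making type-$1$ vertices infertile is subcritical (you delete row and column $1$, the paper zeros the first column; these are equivalent), and both of you invoke Perron--Frobenius to check the spectral radius drops strictly below $1$. Your argument for $\Prb{\txi_1=0}>0$ is likewise the paper's, and your closing worry about non-degeneracy is already covered by the standing hypothesis that some $i\in[K]$ has positive probability of having no type-$[K]$ offspring, which rules out the deterministic path.

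The genuine difference is in showing $\mathbb{E}[\txi_1]=1$. You compute it algebraically via the Schur-complement identity $\mathbb{E}[\txi_1]=A_{1,1}+v(I-\hat A)^{-1}w$ and then read off the value from the right Perron eigenvector of $A$. The paper instead argues probabilistically: since $\cT$ is almost surely finite but has infinite expected size, and since $(\txi_1,\ldots,\txi_{K+K'})$ has finite exponential moments, the type-$1$ subpopulation (a monotype $\txi_1$-branching process) must also be almost surely finite with infinite mean, which forces $\mathbb{E}[\txi_1]=1$. Your route is more computational but self-contained and gives the value directly; the paper's route is shorter once almost-sure extinction is in hand, and avoids any matrix manipulation.
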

\begin{proof}
	 Consider the $(K+K')$-type branching process where any vertex has the same number of children with type $\ge 2$ as in $\cT$, but no children of type $1$. The mean matrix of this process is given by $(m_{i,j}^*)_{1 \le i,j \le K + K'}$ with $m_{i,j}^* = \mathbb{E}[\xi_j^{(i)}]$ for $j > 1$ and $m_{i,1}^* = 0$. 
	
	This process is subcritical. Indeed, since $\rho(M')<1$ it suffices to show that $\rho(M^*)<1$ for  $M^* := (m_{i,j}^*)_{1 \le i,j \le K }$. To see this, note that $M^*$ is obtained from $M$ by setting the coefficients of the first column to zero, and hence it follows from the monotonicity of the spectral radius for matrices with nonnegative coefficients (which is a consequence of Gelfand's formula) that 
	\[
		\rho(M^*) \le \rho(M) = 1.
	\]
	Hence we need to rule out the case $\rho(M^*)=1$. If $\rho(M^*)=1$, then by the Perron-Frobenius theorem for possibly reducible nonnegative matrices there exists a $1$-eigenvector $v$ of $M^*$ with nonnegative coefficients. Coefficient-wise, $Mv$ is larger than or equal to $M^*v = v$.	If $Mv=v$, then by the Perron-Frobenius theorem all coefficients of $v$ need to be positive, but since at least one coefficient of the first column of $M$ is positive this implies $v = Mv \ne M^*v = v$, a clear contradiction. Thus, we must have $Mv \ne v$. Letting $I$ denote the unit matrix, it follows that $(M-I)v$ has nonnegative coefficients, and at least one nonzero coordinate.  Letting $w$ denote a left $1$-eigenvector of $M$ with positive coefficients, it follows that $w(M-I)v>0$ (because $w$ has positive coefficients) and at the same time $w(M-I)v = 0v=0$ (because $w$ is a left $1$-eigenvector of $M$), a clear contradiction. Thus, $\rho(M^*)<1$.
	
	It follows directly from the implicit function theorem that the total population of any subcritical multitype branching process has finite exponential moments, if its offspring distribution admits finite exponential moments.  Since $\rho(M^*)<1$, this implies that $(\txi_1,\ldots, \txi_{K+K'})$ has finite exponential moments. 
	
	Recall that by our assumptions at least one type in $[K]$ has the property, that the probability of having exactly one child is less than $1$. This entails that $\Prb{\txi_1\ge 2}>0$. Furthermore, we have $\Prb{\txi_1=0}>0$, because otherwise $|\cT|=\infty$ almost surely, a contradiction since $\cT$ is critical and hence almost surely finite. Since the type $1$ population of $\cT$ is distributed like the total population of a $1$-type branching process with offspring mechanism $\tilde{\xi}_1$, the fact that the total population of $\cT$ is almost surely finite but has infinite expectation readily implies that the same must hold for the type $1$ subpopulation (because $(\tilde{\xi}_1, \ldots, \tilde{\xi}_{K+K'})$ has finite exponential moments), which by well-known results for monotype branching processes implies $\mathbb{E}[\txi_1] = 1$.
\end{proof}

\begin{lemma}
	\label{lem:bb}
	There exists a constant $c_{\bm \lambda}>0$ and integers $d \ge 1$ and $0 \le q \le d-1$ such that
	\[
		\Prb{ \#_{\bm{\lambda}} \cT = n} \sim c_{\bm \lambda} n^{-3/2}
	\]
	as $n \equiv q \mod d$ tends to infinity, and $\Prb{ \#_{\bm{\lambda}} \cT = n} =0$ for $n \not\equiv q \mod d$.
\end{lemma}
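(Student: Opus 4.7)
The plan is to reduce to the irreducible critical case treated by Stephenson by absorbing the subcritical attachments into vertex weights. Let $\tilde{\cT}$ denote the subtree of $\cT$ spanned by vertices of types in $[K]$. Since types $K+1, \ldots, K+K'$ cannot produce offspring of type in $[K]$, the tree $\tilde{\cT}$ is a $K$-type Bienaym\'e tree with offspring distributions obtained from $\zeta^{(1)}, \ldots, \zeta^{(K)}$ by projecting onto the coordinates in $[K]$; its mean matrix is $M$, which is critical and irreducible by assumption. Attached to each $v \in \tilde{\cT}$ of type $i \in [K]$ is an independent collection of subcritical subtrees, rooted at the type-$>K$ children of $v$ (each such subtree is a $K'$-type \bgw tree with mean matrix $M'$ having $\rho(M')<1$).

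Next, for each $v \in \tilde{\cT}$ of type $i$, let $\Psi(v) := \lambda_i + \Psi_{\mathrm{sub}}(v)$, where $\Psi_{\mathrm{sub}}(v)$ is the total $\bm{\lambda}$-weight of the subcritical subtrees grafted at $v$. Then
\[
\#_{\bm{\lambda}} \cT \;=\; \sum_{v \in \tilde{\cT}} \Psi(v),
\]
and conditionally on the type sequence of $\tilde{\cT}$ the $\Psi(v)$'s are independent, with the distribution of $\Psi(v)$ depending only on the type of $v$. Exactly as in the proof of Lemma~\ref{lem:aa}, the subcriticality of $M'$ together with the exponential moment assumption on $\bm{\zeta}$ and the implicit function theorem applied to the system of fixed-point equations for the generating functions of the subcritical subtrees shows that $\Psi(v)$ has finite exponential moments for each type $i \in [K]$.

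At this point the problem has been recast as computing the asymptotics of $\Prb{\#_{\bm{\lambda}} \cT = n}$ for a critical irreducible $K$-type \bgw tree $\tilde{\cT}$ with vertex-labels $\Psi(v)$ drawn conditionally independently from distributions with finite exponential moments. I would then apply the same cycle-lemma/local-limit-theorem scheme as Stephenson does in \cite[Section 4.3.1]{Ste18}: encode $\tilde{\cT}$ together with its $\Psi$-labels as a multi-type \L{}ukasiewicz-type walk $(\mathbf{S}_j, T_j)_{j \ge 0} \in \Z^K \times \Z$, where the $j$-th step depends on the type of the $j$-th DFS vertex and, on type $i$, has law $(\xi^{(i)}_1 - \delta_{i1}, \ldots, \xi^{(i)}_K - \delta_{iK}, \Psi^{(i)})$ with $\Psi^{(i)} \stackrel{d}{=} (\Psi(v) \mid \mathrm{type}(v) = i)$. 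Using the cycle lemma in its multi-type form and Perron-Frobenius reduction via the positive left eigenvector $\mathbf{a}$ of $M$, this projects to a centered scalar random walk $(W_j)$ of mean zero, finite variance, and finite exponential moments, and yields
\[
\Prb{|\tilde{\cT}| = N,\; \#_{\bm{\lambda}} \cT = n} \;=\; \tfrac{1}{N}\, \Prb{\text{suitable walk event at step } N, \text{endpoint } n}.
\]
A classical local limit theorem for such walks (Gnedenko type, using exponential moments) then gives a uniform Gaussian estimate, and summing over $N$ in the window of concentration $N = n/\bigl(\sum_i a_i \mathbb{E}[\Psi^{(i)}]\bigr) + O(\sqrt{n})$ produces the claimed $n^{-3/2}$ tail with an explicit constant $c_{\bm{\lambda}}>0$. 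The lattice $d$ arises exactly as in \cite[Prop.~2.2]{Ste18}, as the span of the support of the scalar walk $W$.

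The main obstacle is not the reduction itself but rather the careful identification of the lattice $d$: one must show that $W$ is non-degenerate (which follows from our assumption that some $i \in [K]$ has positive probability of producing at least two children, making $\mathbb{V}[W_1]>0$) and determine the correct arithmetic support. Everything else is a routine adaptation of Stephenson's argument, once the subcritical pieces have been swept into the random weights $\Psi(v)$.
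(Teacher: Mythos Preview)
Your reduction to an irreducible $K$-type core $\tilde{\cT}$ with random vertex weights $\Psi(v)$ is correct and would work, but the paper takes a quite different and much shorter route. Rather than stopping the collapse at $K$ types and re-running Stephenson's random-walk/LLT machinery, the paper reduces \emph{all the way to type~$1$} using the flattening already supplied by Lemma~\ref{lem:aa}: since $(\tilde{\xi}_1,\ldots,\tilde{\xi}_{K+K'})$ has finite exponential moments and $\mathbb{E}[\tilde{\xi}_1]=1$, the generating function $Z(x):=\mathbb{E}[x^{\#_{\bm{\lambda}}\cT}]$ satisfies the single scalar implicit equation
\[
Z(x)=x^{\lambda_1}\,\mathbb{E}\!\left[Z(x)^{\tilde{\xi}_1}\,x^{\lambda_2\tilde{\xi}_2}\cdots x^{\lambda_{K+K'}\tilde{\xi}_{K+K'}}\right]=:E(x,Z(x)),
\]
to which a standard singularity-analysis theorem for such equations (\cite[Theorem~28]{MR2240769}) applies directly, yielding both the $n^{-3/2}$ tail and the period $d$ with an explicit constant in a couple of lines. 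Your approach trades this analytic black box for a more explicit probabilistic picture, but two points in your sketch need tightening: Stephenson's scheme proceeds via a root-type (monotype) reduction rather than a ``Perron--Frobenius projection'' of a genuinely multi-type walk, and replacing his deterministic $\lambda_i$ by your random $\Psi^{(i)}$ forces a bivariate local limit theorem for the pair $(\tilde{\xi}_1,\Psi)$ rather than the univariate one in \cite{Ste18}. Both are routine, but neither is literally there; the paper's generating-function route sidesteps them entirely.
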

\begin{proof}
	Let 
	\[
	f(x_1, x_2, \ldots, x_{K+K'}) := \mathbb{E}[x_1^{\txi_1}  \cdots x_{K+K'}^{\txi_{K+K'}} ].
	\]
	This way, $Z(x) := \mathbb{E}[x^{\#_{\bm{\lambda}} \cT }]$ satisfies
	\begin{align*}
		 Z(x) = x^{\lambda_1} f(Z(x), x^{\lambda_2}, \ldots, x^{\lambda_{K+K'}}) = E(x, Z(x))
	\end{align*}
	for $E(x,y) := x^{\lambda_1} f(y, x^{\lambda_2}, \ldots, x^{\lambda_{K+K'}})$.  
	
	The fact that $\cT$ is almost surely finite and that the type $1$ subpopulation is distributed like the total population of a $\txi_1$-monotype branching process imply that $Z(x)$ has radius of convergence $1$ and $Z(1)=1$.
	
	This allows us to apply~\cite{MR2240769}, Theorem 28, yielding the statement for integers $d \ge 1$, $0 \le q \le d-1$ and a constant
	\[
		c_{\bm \lambda} = d \sqrt{\frac{ \frac{\partial E}{\partial x}(1,1)}{ 2 \pi \frac{\partial^2 E}{\partial y^2}(1,1) }  } >0 .
	\]
\end{proof}

\subsection{Proof of Theorem~\ref{thm:reducible}}

With Lemma~\ref{lem:aa} and Lemma~\ref{lem:bb} at hand, the proof of Theorem~\ref{thm:reducible} is now fully analogous to the proofs in the irreducible case. We obtain the scaling constant
\begin{align*}
%	\label{eq:sconstant}
	c_{\mathrm{scal}}':=\frac{\sqrt{\mathbb{V}[\txi_1] }}{c_{\varsigma}}	
	\frac{\sqrt{\left(\sum\limits_{i=1}^{K+K'} \lambda_i \mathbb{E}\left[\tilde{\xi}_i\right]\right)}}{2}.
\end{align*}
It remains to simplify this expression. Here, $c_{\varsigma}>0$ denotes for a uniformly chosen child of the root in a size-biased version of the reduced tree, the expected height of its corresponding vertex after the blow-up operation.
This constant is the same as for the irreducible process where each vertex has the same random number of children with type in $\{1, \ldots, K\}$ and no children of type larger than $K$. Therefore, as in Equation~\eqref{eq:mimi}
\begin{align}
	\label{eq:redmimi}
	\frac{\sqrt{\mathbb{V}[\txi_1] }}{c_{\varsigma}}	  = \sigma \sqrt{a_1}
\end{align}
with $\mathbf{a}:=(a_1,\ldots,a_K)$ the unique $1$-left eigenvector with positive coordinates of the matrix $M$ such that $\sum_{i=1}^K a_i=1$, and 
\begin{align*}
	\sigma^2=\sum_{i,j,k=1}^K a_i b_j b_k Q^{(i)}_{j,k},
\end{align*}
with $\mathbf{b}:=(b_1,\ldots,b_K)$ the unique $1$-right eigenvector of $M$ with positive coordinates such that $\sum_{i=1}^K a_i b_i=1$, and
\begin{align*}
	Q^{(i)}_{j,j} = \sum_{\mathbf{z} \in \N_0^K} \mu^{(i)}(\mathbf{z}) z_j(z_j-1), \text{ and } Q^{(i)}_{j,k}=\sum_{\mathbf{z} \in \N_0^K}\mu^{(i)}(\mathbf{z}) z_j z_k \text{ for }j \neq k.
\end{align*}

By the Perron-Frobenius theorem and its block upper triangular form, the mean matrix of $\bm{\zeta}$ has a left $1$-eigenvector  with nonnegative coordinates. Due to the block upper triangular form of the mean matrix of $\bm{\zeta}$, it follows that it is of the form $(\bm{a}, \bm{a'})$ with $\bm{a}$ a left $1$-eigenvector of $M$ and 
\begin{align}
	\bm{a'} = \bm{a} S (I -M')^{-1}.
\end{align}
Here $I$ denotes the $K' \times K'$ unit matrix and $I-M'$ is invertible since $\rho(M')<1$. We will also use the notation $I$ for referring to the unit matrix of a different dimension whenever it is clear from the context what the dimension is. Furthermore, we use the notation $(\bm{a}, \bm{a'})$ to denote the concatenation of the coordinates of $\bm{a}$ and $\bm{a'}$, that is, $(\bm{a}, \bm{a'})$ refers to the $(K+K')$-dimensional row vector
\[
(\bm{a}, \bm{a'}) = (a_1, \ldots, a_{K+K'}).
\]

For all $1 \le i,j \le K+K'$ we define $\tilde{\xi}_{i,j}$ analogously to $\tilde{\xi}_j$ as the number of nonroot vertices with type $j$ in a branching process started with a single individual of type $i$ that has almost the same branching mechanism as $\cT$, with the only difference being that nonroot vertices with type $1$ are infertile. This way,  $\tilde{\xi}_j$ is equal in distribution to $\tilde{\xi}_{1,j}$. With $A = (a_{i,j})_{1 \le i,j \le K+K'}$ denoting the mean matrix of the offspring distribution $\bm{\zeta}$, we have
\begin{align}
	\label{eq:f1}
	\mathbb{E}[\tilde{\xi}_{i,j}] &= a_{i,j} + \sum_{\ell=2}^{K+K'} a_{i, \ell} \mathbb{E}[\tilde{\xi}_{\ell,j}].
\end{align}
Using the notation from the proof of Lemma~\ref{lem:aa}, we let
\begin{align*}
	A^* = 
	\begin{pmatrix}
		M^* & \rvline & S \\
		\hline
		0 & \rvline & M'
	\end{pmatrix}
\end{align*}
denote the mean matrix of the subcritical branching process considered there. Furthermore, we let \[
E = (\mathbb{E}[\tilde{\xi}_{i,j}])_{1 \le i,j \le K+K'}.
\]
This way, Equation~\eqref{eq:f1} may be rephrased as
\begin{align*}
E = A + A^* E
\end{align*}
Since $\rho(A^*)<1$ we have that $I - A^*$ is invertible. %, with $I$ denoting the $(K + K') \times (K + K')$ unit matrix. 
 Hence
\begin{align}
	E = (I - A^*)^{-1} A.
\end{align}
We let $e_1^\top = (1, 0, \ldots, 0)$ denote the first $K+K'$ dimensional unit vector. This way, $A = A^* + Ae_1 e_1^\top$ and therefore
\begin{align*}
		e_1^\top (I-A^*)^{-1} A &= 	e_1^\top  (I-A^*)^{-1}(A^* + Ae_1 e_1^\top ) \\
		&= e_1^\top  (I-A^*)^{-1}(I   + Ae_1 e_1^\top  + (A^* -I)) \\
		&= e_1^\top  (I-A^*)^{-1} +  e_1^\top  (I-A^*)^{-1}A e_1 e_1^\top  - e_1^\top  \\
		&=   e_1^\top  (I-A^*)^{-1}  +  (e_1^\top E e_1 - 1) e_1^\top  \\
		&= e_1^\top  (I-A^*)^{-1},
\end{align*}
because $e_1^\top E e_1 = \mathbb{E}[\tilde{\xi}_{1,1}] = \mathbb{E}[\tilde{\xi}_{1}] = 1$ by Lemma~\ref{lem:aa}. Therefore, $e_1^\top  (I-A^*)^{-1}$ is a left $1$-eigenvector of $A$, and it is equal to the first row of $E$. In other words,
\[
e_1^\top  E = (\mathbb{E}[\tilde{\xi}_{1}], \ldots, \mathbb{E}[\tilde{\xi}_{K+K'}])
\]
is a left $1$-eigenvector of $A$ whose first coordinate is equal to $1$. Since the first $K$ coordinates form necessarily a $1$-eigenvector of $M$, it follows that 
\begin{align}
	\label{eq:important}
	(\mathbb{E}[\tilde{\xi}_{1}], \ldots, \mathbb{E}[\tilde{\xi}_{K+K'}]) = \frac{1}{a_1} (\bm{a}, \bm{a}').
\end{align}
Thus, we arrive at
\begin{align*}
	c_{\mathrm{scal}}' = 	\frac{\sigma}{2} \sqrt{\sum_{i=1}^{K+K'} \lambda_i a_i}.
\end{align*}
This completes the proof.

\section{Conditioning by types}\label{sec:types}

We observe that for the regime under consideration we condition on an event whose probability decays polynomially. A similar result was given by P\'enisson~\cite{MR3476213}, Proposition 5.4.

\begin{lemma}
	\label{le:condpoly}
	Under the conditions of Theorem~\ref{thm:bytype}, there exists $s>0$ with
	\[
%		\Prb{(\#_i \cT)_{i \in I} = x(n)} = \Theta\left( \frac{1}{n^{1 + |I|/2}}\right).
				\Prb{(\#_i \cT)_{i \in I} = x(n)} = \Theta\left( \frac{1}{n^{s}}\right).
	\]
\end{lemma}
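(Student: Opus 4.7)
The plan is to apply a multidimensional local central limit theorem (LCLT) via a multi-type Otter--Dwass cycle identity, in the spirit of~\cite[Prop.~5.4]{MR3476213}.

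First I would express, for each admissible profile $\bm{n} = (n_1, \ldots, n_{K+K'})$,
\[
\Prb{(\#_j \cT)_{j=1}^{K+K'} = \bm{n}} = \frac{1}{|\bm{n}|}\,\Prb{W_{|\bm{n}|} = v(\bm{n})},
\]
where $|\bm{n}| = \sum_j n_j$, $W$ is a random walk in $\Z^{K+K'}$ whose i.i.d.\ increments are built from the projection $\bmu$ of $\bzeta$ (one increment per vertex visited in depth-first order, with the type read off from the walk itself), and $v(\bm{n})$ is the endpoint dictated by the first-passage-to-$-1$ constraint in the type-$1$ coordinate. This is the same multi-type cycle-lemma bookkeeping used in~\cite[Sec.~4.3.1]{Ste18} to derive~\eqref{poldecay}. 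Summing then gives
\[
\Prb{(\#_i\cT)_{i\in I} = x(n)} = \sum_{(n_j)_{j\notin I}} \frac{1}{|\bm{n}|}\,\Prb{W_{|\bm{n}|} = v(\bm{n})}.
\]

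Second, I would apply a multidimensional LCLT to $W$. Finite exponential moments of $\bzeta$ yield finite covariance and Cram\'er-type deviation bounds for the increments; irreducibility of $M$, together with the accessibility of each subcritical type from the critical block, ensure that the increments are not supported on any proper affine sublattice and that the effective covariance of $W$ has full rank $r$. The LCLT then yields $\Prb{W_m = y} = \Theta(m^{-r/2})$ uniformly for $y$ in the $O(\sqrt m)$-window about the mean. By hypothesis~\eqref{eq:cccond} the target $v(\bm{n})$ lies in this window whenever the free coordinates $(n_j)_{j\notin I}$ are within $\sqrt n$ of their typical values proportional to $(a_j)_{j \notin I}$, while Cram\'er bounds for $W$ show that contributions from other $\bm{n}$ are exponentially negligible. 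A Riemann-sum evaluation of the resulting restricted sum combines the cyclic factor $\Theta(1/n)$, the LCLT peak $\Theta(n^{-r/2})$, and the $\Theta(n^{(K+K'-|I|)/2})$ gain from summing over the free coordinates, producing $\Theta(n^{-s})$ for an explicit exponent $s = 1 + r/2 - (K+K'-|I|)/2 > 0$.

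The main obstacle is verifying non-degeneracy of the effective covariance in the reducible case, especially when $I$ contains subcritical types. The block-triangular structure of the mean matrix together with $\rho(M')<1$ guarantee that each subcritical excursion rooted at a critical-type vertex has finite exponential moments, so that the coordinates of $W$ associated to subcritical types become sums of i.i.d.\ finite-variance contributions indexed by the critical vertices traversed by the exploration; non-degeneracy of their joint covariance then reduces to the accessibility assumption, which rules out any subcritical coordinate becoming asymptotically deterministic. Once this is in place, both the upper bound and the matching lower bound on $\Prb{(\#_i\cT)_{i\in I} = x(n)}$ follow directly from the standard multidimensional LCLT applied to the identity above.
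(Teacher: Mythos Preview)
Your approach is workable but takes a genuinely different route from the paper, and your sketch contains an imprecision that the paper's method sidesteps entirely.

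The paper does \emph{not} use a full $(K+K')$-dimensional walk indexed by all vertices. Instead it exploits the flattening: the type-$1$ reduced tree is a monotype \bgw tree whose blob statistics $(\tilde{\xi}_i)_{i \in I \cup \{1\}}$ come in genuinely i.i.d.\ copies, one per type-$1$ vertex. The cycle lemma (as in~\eqref{eq:eq1}) then gives, when $1 \in I$,
\[
\Prb{(\#_i\cT)_{i\in I} = x(n)} = \frac{1}{x_1(n)}\,\Prb{\sum_{j=1}^{x_1(n)} \bigl(\tilde{\xi}_1^{(j)}-1,\;(\tilde{\xi}_i^{(j)})_{i \in I\setminus\{1\}}\bigr) = \bigl(-1,\;(x_i(n))_{i \in I\setminus\{1\}}\bigr)},
\]
and a single application of the multidimensional LCLT (Proposition~\ref{pro:llt}) to this $|I\cup\{1\}|$-dimensional i.i.d.\ walk with $x_1(n)\sim a_1 n$ steps yields $\Theta(n^{-1-|I|/2})$ directly, the target lying in the $O(\sqrt n)$-window thanks to~\eqref{eq:cccond} and the identity $\mathbb{E}[\tilde{\xi}_i]=a_i/a_1$ from~\eqref{eq:important}. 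If $1\notin I$ one additionally sums over $x_1(n)$ by Riemann. No summation over the remaining $K+K'-|I|$ types, no separate non-degeneracy argument for subcritical coordinates; possible rank deficiency of $\bm{\Sigma}$ is handled by a change of coordinates in one line.

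In your sketch, by contrast, the walk $W$ in $\Z^{K+K'}$ does \emph{not} have i.i.d.\ increments: the $k$-th increment has law $\mu^{(t_k)}$ with $t_k$ determined by the previous steps, exactly as your own parenthetical acknowledges. So the off-the-shelf LCLT does not apply as written. One can repair this, in the manner of~\cite[Prop.~5.4]{MR3476213}, by conditioning on the full profile $\bm{n}$ and invoking a triangular-array LCLT for independent but non-identically-distributed summands, then Riemann-summing over the $K+K'-|I|$ free coordinates; this gives the same exponent. Note also that the multitype cyclic factor is $1/n_1$ rather than $1/|\bm{n}|$, though both are $\Theta(1/n)$ here. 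The flattening shortcut buys you i.i.d.\ increments and a walk whose dimension is $|I\cup\{1\}|$ rather than $K+K'$, which makes all of this extra work unnecessary.
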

\begin{proof}
	Let $\bm{\Sigma}$ denote the covariance matrix of $(\tilde{\xi}_i)_{i \in I\cup\{1\}}$, $\varphi_{\bm{0}, \bm{\Sigma}}$ the density of the normal distribution $\mathcal{N}(\bm{0}, \bm{\Sigma})$, and $m$ as in Proposition~\ref{prop:part} for the random vector $\bm{X} := (\tilde{\xi}_i)_{i \in I\cup\{1\}}$.
	
	As stated in Proposition~\ref{prop:part}, if the smallest lattice containing the support of $\bm{X}$ has full rank, then $m>0$. We are first going to treat this case. 
	
	Furthermore, we make a case distinction on whether $1 \in I$, and we first treat the case $1 \in I$. Let $I^* := I \setminus\{1\}$. Analogously as in Equation~\eqref{eq:eq1} we obtain
	\begin{align}
		\label{eq:cbtstep1}
	\Prb{(\#_i \cT)_{i \in I} = x(n)} &= \frac{1}{x_1(n)} \Prb{\sum_{j=1}^{x_1(n)} \left(\tilde{\xi}_1^{(j)} -1, (\tilde{\xi}_i^{(j)})_{i \in I^*}\right) = \left(-1, (x_i(n))_{i \in I^*}\right)}.
	\end{align}
	Here $(\tilde{\xi}_i^{(j)})_{i \in I}$, $j=1,2, \ldots$ denote independent copies of $(\tilde{\xi}_i)_{i \in I}$. By Equation~\eqref{eq:important} we know that
	\[
		\Ex{\tilde{\xi}_i} = \frac{a_i}{a_1}
	\]
	and by Equation~\eqref{eq:cccond} we may write
	\[
		x_i(n) = a_i n + t_i(n) \sqrt{n}
	\]
	with $t_i(n) = O(1)$  as $n \to \infty$ for all $i \in I$.  By Proposition~\ref{pro:llt}, it follows from Equation~\eqref{eq:cbtstep1} that
	\begin{align}
		\label{eq:with1}
		&\Prb{(\#_i \cT)_{i \in I} = x(n)} \\ 
		&= \frac{1}{x_1(n)} \Prb{\sum_{j=1}^{x_1(n)} \frac{1}{\sqrt{n}} \left(\tilde{\xi}_i^{(j)} -\Ex{\tilde{\xi}_i}\right)_{i \in I}  = \left(-\frac{1}{\sqrt{n}}, \left(t_i(n) - t_1(n) \frac{a_i}{a_1}\right)_{i \in I^*}\right)  } \nonumber \\
		&\sim \frac{m}{(a_1 n)^{1 + |I|/2}} \varphi_{\bm{0}, \bm{\Sigma}}\left(\left(t_i(n) - t_1(n) \frac{a_i}{a_1}\right)_{i \in I}\right). \nonumber
	\end{align}
	 Since $t_i(n) = O(1)$ for all $i \in I$ this simplifies to 
	\[
		\Prb{(\#_i \cT)_{i \in I} = x(n)} = \Theta\left( \frac{1}{n^{1 + |I|/2}}\right).
	\]
	It remains to treat the case $1 \notin I$. For a positive integer $x_1(n) = a_1n + t_1(n) \sqrt{n}$ we obtain analogously as for Equation~\eqref{eq:with1} that
	\begin{align}
		\label{eq:concentrationwith1}
		&\Prb{(\#_i \cT)_{i \in I\cup\{1\}} = (x_i(n))_{i \in I \cup \{1\}}} \\
		&= \frac{m}{(x_1(n))^{3/2 + |I|/2}} \varphi_{\bm{0}, \bm{\Sigma}}\left( \left(t_i(n) - t_1(n) \frac{a_i}{a_1}\right)_{i \in I\cup\{1\}} + \frac{o(1)}{\max(1, \sum_{i \in I} \left(t_i(n) - t_1(n) \frac{a_i}{a_1}\right)^2 )}\right)\nonumber \\
		&= \frac{m}{(x_1(n))^{3/2 + |I|/2}} \varphi_{\bm{0}, \bm{\Sigma}}\left( \left(t_i(n) - t_1(n) \frac{a_i}{a_1}\right)_{i \in I\cup\{1\}} + \frac{o(1)}{\max(1, t_1(n)^2 )}\right).\nonumber
	\end{align}
	  Summing over $x_1(n)$ means summing over $t_1(n)$ in increments of a fixed multiple of $1/\sqrt{n}$, yielding by Riemann summation that
	\[
			\Prb{(\#_i \cT)_{i \in I} = x(n)} = \Theta\left( \frac{1}{n^{1 + |I|/2}}\right).
	\]
	This concludes the case $1 \notin I$.
	
	Hence, the statement of the present lemma holds for $s = 1 + |I|/2$ if the smallest lattice containing the support of $\bm{X}$ has full rank. The case where the rank is strictly less than $|I \cup\{1\}|$ may be treated by a change of coordinates and arguing analogously. This concludes the proof.	
\end{proof}

Equation~\eqref{eq:concentrationwith1} already shows that if $1 \notin I$ then $\#_1(\mT_n)/n \convp a_1$. More strongly, Lemma~\ref{le:condpoly} allows us to argue analogously as in Proposition~\ref{prop:concentration}, showing that if $1 \notin I$ then
there exist constants $A,B>0$ independent of $n$ such that, for all $n \geq 1$: 
\begin{align*}
	\mathbb{P}\left( \left|{\#_1 (\mT_n)}-a_1 n\right| > n^{\frac{1}{2}+\delta} \right) \leq A \exp\left( -B n^\delta \right).
\end{align*}
Of course, in case $1 \in I$ this holds trivially by our assumptions on $x_1(n)$.

Thus, we may argue fully analogously as in the proof of Theorem~\ref{thm:reducible}, obtaining first
\[
	\left(\mT_n^{\redu}, \frac{\sqrt{\mathbb{V}[\tilde{\xi}_1]}}{2} (a_1 n)^{-\frac{1}{2}}d_{\mT_n^{\redu}},m_{\mT_n^{\redu}}\right)\convd (\mathcal{T}_{\be},d_{\mathcal{T}_{\be}},m_{\mathcal{T}_{\be}}),
\]
then the same scaling limit for $\mT_n^{\flat}$, and then by comparison with the size-biased tree,
\[
	\left(\mT_n, \frac{\mathfrak{c}_{\mathrm{scal}}}{\sqrt{n}}d_{\mT_n},m_{\mT_n}\right)\convd (\mathcal{T}_{\be},d_{\mathcal{T}_{\be}},m_{\mathcal{T}_{\be}})
\]
for
\[
	\mathfrak{c}_{\mathrm{scal}} = 	\frac{\sqrt{\mathbb{V}[\tilde{\xi}_1]}}{2 c_{\varsigma} \sqrt{a_1}}.
\]
By Equation~\eqref{eq:redmimi} we have $	\frac{\sqrt{\mathbb{V}[\txi_1] }}{c_{\varsigma}}	  = \sigma \sqrt{a_1}$, hence this simplifies to
\begin{align}
	\mathfrak{c}_{\mathrm{scal}} = \frac{\sigma}{2}.
\end{align}
The tail bound for the height of $\mT_n$ may be obtained fully analogously as in the proof of Theorem~\ref{thm:reducible}. With this, the proof of Theorem~\ref{thm:bytype} is complete.
%%%%%%%%%%%%%%%%%%%%%%%%%%%%%%%%%%%%%%%%%%%%%%
%% Single Appendix:                         %%
%%%%%%%%%%%%%%%%%%%%%%%%%%%%%%%%%%%%%%%%%%%%%%
\begin{appendix}
\section*{\label{sec:app}}%% if no title is needed, leave empty \section*{}.
\subsection{Medium deviation inequalities}

We make use of the following medium deviation inequality (see \cite{RS15}, Example 1.4).

\begin{lemma}\label{DeviationLemma}
	Let $(X_i)_{i\in\mathbb{N}}$ be an i.i.d. family of real-valued random variables, satisfying $\mathbb{E}[X_1]=0$ and $\mathbb{E}[e^{sX_1}]<\infty$ for all $s$ in some open interval around zero. Then there exist constants $r,D>0$, such that for all $n\in \mathbb{N}, x\geq 0$ and $0\leq \varrho\leq r$ it holds that
	\begin{align*}
		\Prb{|X_1+\dots+X_n|\geq x}\leq 2 \exp(Dn\varrho^2-\varrho x).
	\end{align*}
\end{lemma}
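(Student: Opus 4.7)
The plan is to apply the standard Chernoff argument together with a local quadratic bound on the cumulant generating function $\psi(s) := \log \mathbb{E}[e^{sX_1}]$ near the origin.

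First I would check that $\psi$ is well-defined and smooth on some open interval $(-r_0, r_0)$ around zero. By assumption, $\mathbb{E}[e^{sX_1}]$ is finite for $s$ in an open interval around $0$, and on such an interval the moment generating function $\varphi(s) := \mathbb{E}[e^{sX_1}]$ is analytic with $\varphi(0)=1$. Hence $\psi$ is analytic around zero, with $\psi(0)=0$, and $\psi'(0) = \varphi'(0) = \mathbb{E}[X_1] = 0$. Taylor's theorem then yields a constant $D>0$ and some $r \in (0, r_0)$ with
\begin{equation*}
   \psi(s) \le D s^2 \quad \text{for all } s \in [-r, r].
\end{equation*}
Concretely one may take $D = \sup_{|s|\le r} |\psi''(s)|/2 + 1$ for $r$ small enough; the key point is that $\psi(s)/s^2$ is bounded near zero by the variance of $X_1$ (which is finite since $X_1$ has exponential moments).

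Next I would run the standard exponential Markov inequality twice. For $0 \le \varrho \le r$ we have, by independence,
\begin{equation*}
   \Prb{X_1 + \dots + X_n \ge x} \le e^{-\varrho x} \mathbb{E}[e^{\varrho X_1}]^n = \exp\bigl(n\psi(\varrho) - \varrho x\bigr) \le \exp(Dn\varrho^2 - \varrho x).
\end{equation*}
Exactly the same argument applied to the i.i.d.\ variables $-X_i$ (which also have mean zero and the same exponential-moment property on a symmetric interval, after possibly shrinking $r$) gives
\begin{equation*}
   \Prb{X_1 + \dots + X_n \le -x} \le \exp(Dn\varrho^2 - \varrho x).
\end{equation*}
Adding these two inequalities, by a union bound, yields the claimed estimate with the factor $2$.

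There is no real obstacle: the only point requiring a small amount of care is to ensure that $r$ is chosen small enough that the quadratic bound $\psi(s) \le Ds^2$ holds simultaneously for $s$ and $-s$ in $[-r, r]$, and that both expressions $\mathbb{E}[e^{\pm \varrho X_1}]$ are finite there. This is automatic from analyticity of $\varphi$ on a neighborhood of $0$. Note that the bound is only useful (nontrivial) when $\varrho \le x/(2Dn)$, i.e.\ in the medium-deviation regime; for larger $x$ (relative to $n$) one typically takes $\varrho = r$, recovering a purely exponential tail $2\exp(Dnr^2 - rx)$, which is why the inequality is stated uniformly in $\varrho \in [0, r]$.
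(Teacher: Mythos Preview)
Your proof is correct and follows essentially the same approach as the paper's: both use the Chernoff/Markov inequality together with a quadratic bound on the moment generating function near zero, then symmetrize for the lower tail. The only cosmetic difference is that you work with the cumulant generating function $\psi(s)=\log\mathbb{E}[e^{sX_1}]$ and invoke Taylor's theorem, whereas the paper bounds the moments via $\mathbb{E}[e^{r|X_1|}]$ to obtain $\mathbb{E}[e^{sX_1}]\le 1+C's^2\le e^{C's^2}$ directly.
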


\begin{proof}
	Let $r>0$ such that $\mathbb E[e^{sX_1}]<\infty$ for all $s$ such that $|s| \leq r$, and set $C:=\mathbb E[e^{r|X_1|}]$. In particular, using the fact that $e^x \geq \frac{x^k}{k!}$ for all $x \geq 0$, we get that, for all $k \geq 0$, $|\mathbb E[X_1^k]|\leq C k! r^{-k}$. In particular, $\mathbb{V}[X_1]<\infty$.
	Thus, for any $s \in (0,r)$, we have
	\begin{align*}
		\mathbb E[e^{s X_1}] &= \sum_{k \geq 0} \frac{s^k}{k!} \mathbb E[X_1^k]\\
		&\leq 1+\frac{s^2}{2} \mathbb{V}[X_1] + C \sum_{k \geq 3} (s/r)^k\\
		&\leq 1+C' s^2
	\end{align*}
	uniformly for $s \in (0,r)$ small enough, for some constant $C'$.
	Using Markov's inequality, we get for any $\varrho \in (0,r)$:
	\begin{align*}
		\Prb{X_1+\ldots+X_n \geq x} &\leq e^{-\varrho x} \mathbb E\left[ e^{\varrho X_1} \right]^n\\
		&\leq e^{-\varrho x} (1+C' \varrho^2)^n\\
		&\leq e^{C' \varrho^2 n - \varrho x}.
	\end{align*}
	We bound $\Prb{X_1+\ldots+X_n \leq -x}$ the same way, which proves the result.
\end{proof}

\subsection{Multi-dimensional local limit theorems}
We recall a multi-dimensional local limit theorem following closely the presentation in~\cite{MR4414401}.

Let  $d \ge 1$ denote an integer. A \emph{lattice in  $\Z^d$} is a subset of the form \[
L_{\bm{g}, \bm{A}} = \{\bm{g} + \bm{A} \bm{x} \mid \bm{x} \in \Z^d\}
\]
for  $\bm{g} \in \Z^d$ and $\bm{A} \in \Z^{d \times d}$. For each vector $\bm{h} \in L_{\bm{g}, \bm{A}}$ we have that $L_{\bm{g}, \bm{A}} = L_{\bm{h}, \bm{A}}$. Hence, if we shift a lattice by the negative of any of its elements we obtain a $\Z$-linear submodule of $\Z^d$.  Note that the matrix $\bm{A}$ is not uniquely determined by the lattice. In fact, any  matrix whose columns form a $\Z$-linear generating set of $\{\bm{A} \bm{x} \mid \bm{x} \in \Z^d\}$  will do.

Given a nonempty subset $\Omega \subset \Z^d$   there exists a smallest lattice in $\Z^d$ containing $\Omega$: Select $\bm{g} \in \Omega$ and let $\Lambda$ denote the $\Z$-span of $\Omega - \bm{g}$. Any $\Z$-submodule of $\Z^d$ has rank at most $d$. It follows that there is at least one matrix $\bm{A} \in \Z^{d \times d}$ with $\Lambda = \{\bm{A} \bm{x} \mid \bm{x} \in \Z^d\}$. Thus $L_{\bm{g}, \bm{A}}$ is a lattice containing $\Omega$. Furthermore, $L_{\bm{g}, \bm{A}}$ is a subset of any lattice containing $\Omega$: If $S = L_{\bm{h}, \bm{B}}$ is another lattice containing $\Omega$, then it follows that $\bm{g} \in S$ and hence $S = L_{\bm{g}, \bm{B}}$. This yields that the $\Z$-module $S - \bm{g}$ must contain $\Omega - \bm{g}$, and hence  $\Lambda$ is a submodule of $S - \bm{g}$. Hence $L_{\bm{g}, \bm{A}} \subset S$. 

Let $\bm{X}$ denote a random vector with values in an abelian group $F \simeq \Z^d$. We call $\bm{X}$ \emph{aperiodic}, if the smallest subgroup $F_0$ of $F$ that contains the support $\mathrm{supp}(\bm{X})$ satisfies $F_0 = F$. This is not actually a restriction, because the structure theorem for finitely generated modules over a principal ideal domain yields that $F_0 \simeq \Z^{d'}$ for some $0 \le d' \le d$. We say the random vector $\bm{X}$ (and the associated random walk with step distribution~$\bm{X}$) is \emph{strongly aperiodic}, if the smallest semi-group (a subset closed under addition that contains $\bm{0}$)  $F_1$ of $F$ that contains $\mathrm{supp}(\bm{X})$ satisfies $F_1 = F$. Note that this, on the other hand, does constitute an actual restriction. For example, if $\bm{X}$ is $2$-dimensional with support $\{ (1,0), (0,1), (1,2) \}$, then it is aperiodic, but not strongly aperiodic (although the support is not even contained on any straight line). 

\begin{proposition}[{\cite{MR0279849}, Proposition 1}]
	\label{prop:part}
	Let $\bm{X}$ be a random vector in $\Z^d$. Let  $\bm{g} + \bm{D} \Z^d$ be the smallest lattice  containing the support of $\bm{X}$. Suppose that  $\bm{g} \in \mathrm{supp}(\bm{X})$ and that $\bm{D}$ has full rank, so that $m := |\det \bm{D}|$ is a positive integer. Let $(\bm{X}_i)_{i \ge 1}$ denote independent copies of $\bm{X}$ and set
	\begin{align}
		\bm{S}_n := \sum_{i=1}^n \bm{X}_i.
	\end{align}
	Then:
	\begin{enumerate}
		%		\item The cosets $j \bm{a} + \bm{D} \Z^d$, $1 \le j \le m$, are mutually disjoint.
		\item The support of $\bm{S}_m$ generates $\bm{D} \Z^d$ as additive group.
		\item For all $k \ge 1$ and $1 \le j \le m$ it holds that $\Prb{\bm{S}_{km +j} \in j \bm{g} + \bm{D} \Z^d} = 1$.
	\end{enumerate}
\end{proposition}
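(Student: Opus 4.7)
The plan is to establish both statements using elementary lattice algebra, the key fact being that $m\bm{D}^{-1}$ has integer entries.

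First I would observe that since $\bm{D} \in \Z^{d\times d}$ has $|\det \bm{D}| = m$, the adjugate identity $\bm{D}\cdot \mathrm{adj}(\bm{D}) = (\det \bm{D})\,I$ yields $m\bm{D}^{-1} = \pm\,\mathrm{adj}(\bm{D}) \in \Z^{d\times d}$. In particular, $m\bm{g} = \bm{D}(m\bm{D}^{-1}\bm{g}) \in \bm{D}\Z^d$. Statement (2) then follows at once: since $\mathrm{supp}(\bm{X}) \subseteq \bm{g} + \bm{D}\Z^d$, writing each $\bm{X}_i = \bm{g} + \bm{D}\bm{y}_i$ with $\bm{y}_i \in \Z^d$ gives $\bm{S}_n \in n\bm{g} + \bm{D}\Z^d$ almost surely for every $n \geq 1$. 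Because $m\bm{g} \in \bm{D}\Z^d$, the coset $n\bm{g} + \bm{D}\Z^d$ depends only on $n \bmod m$, so for $n = km+j$ with $k\geq 1$ and $1\leq j \leq m$ we obtain $\bm{S}_{km+j} \in j\bm{g} + \bm{D}\Z^d$ almost surely.

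For statement (1), let $\Lambda$ denote the additive subgroup of $\Z^d$ generated by $\mathrm{supp}(\bm{S}_m)$. Applying the previous paragraph with $n = m$ together with $m\bm{g} \in \bm{D}\Z^d$ gives $\mathrm{supp}(\bm{S}_m) \subseteq \bm{D}\Z^d$, hence $\Lambda \subseteq \bm{D}\Z^d$. For the reverse inclusion I would use the hypothesis $\bm{g} \in \mathrm{supp}(\bm{X})$: for any $\bm{x} \in \mathrm{supp}(\bm{X})$ both $\bm{x} + (m-1)\bm{g}$ and $m\bm{g}$ lie in $\mathrm{supp}(\bm{S}_m) \subseteq \Lambda$, so their difference $\bm{x} - \bm{g}$ is in $\Lambda$. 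By definition $\bm{g} + \bm{D}\Z^d$ is the smallest lattice containing $\mathrm{supp}(\bm{X})$, which, since $\bm{g}$ itself lies in this support, means that $\bm{D}\Z^d$ is the $\Z$-linear span of $\mathrm{supp}(\bm{X}) - \bm{g}$. Therefore $\bm{D}\Z^d \subseteq \Lambda$, proving equality.

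There is no substantive obstacle here; the statement is essentially a bookkeeping lemma about lattices. The only point worth emphasizing is the essential role of the hypothesis $\bm{g} \in \mathrm{supp}(\bm{X})$, which is what allows one to realize the generators $\bm{x} - \bm{g}$ of $\bm{D}\Z^d$ as differences of elements of $\mathrm{supp}(\bm{S}_m)$, by padding out sums with $(m-1)$ copies of $\bm{g}$.
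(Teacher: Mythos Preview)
Your proof is correct. Note, however, that the paper does not supply its own proof of this proposition: it is quoted as \cite[Prop.~1]{MR0279849} and used as a black box in the appendix, so there is no in-paper argument to compare against. Your self-contained lattice argument---using the adjugate identity to get $m\bm{g}\in\bm{D}\Z^d$, and padding with $(m-1)$ copies of $\bm{g}$ to realize each $\bm{x}-\bm{g}$ as a difference in $\mathrm{supp}(\bm{S}_m)$---is a clean and standard way to establish the result.
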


The following is a strengthened and generalized multi-dimensional version of Gnedenko's local limit theorem, that applies to the case of lattice distributed random variables that are aperiodic but not necessarily strongly aperiodic.

\begin{proposition}[Strengthened local central limit theorem for lattice distributions]
	\label{pro:llt}
	Let $\bm{X}$, $m$, $\bm{g}$, $\bm{D}$ and $\bm{S}_n$ be as in Proposition~\ref{prop:part}. 
	Suppose that $\bm{X}$ has a finite  covariance matrix $\bm{\Sigma}$. Our assumptions imply that $\bm{\Sigma}$ is positive-definite. Let
	\[
	\varphi_{\bm{0}, \bm{\Sigma}}(\bm{y}) = \frac{1}{\sqrt{(2 \pi)^d \det \bm{\Sigma}}} \exp\left( - \frac{1}{2} \bm{y}^\intercal \bm{\Sigma}^{-1} \bm{y} \right)
	\]
	be the  density of the normal distribution $\mathcal{N}(\bm{0}, \bm{\Sigma})$. Set
	\[
	\bm{A}_n = n \Ex{\bm{X}} \qquad \text{and} \qquad B_n = \sqrt{n},
	\]
	so that $B_n^{-1}(\bm{S}_n - \bm{A}_n) \convd \mathcal{N}(\bm{0}, \bm{\Sigma})$. Set 	\[
	R_n(\bm{x}) = \max\left(1, {  \| B_n^{-1} (\bm{x} - \bm{A}_n) \|_2^2} \right).
	\]
	Then for each integer $1 \le j \le m$ 
	\begin{align}
		\label{eq:tof}
		\lim_{\substack{n \to \infty \\ n \in j + m \Z}} \sup_{\bm{x} \in j\bm{g} + \bm{D} \Z^d} R_n(\bm{x})  |B_n^d  \Prb{\bm{S}_n = \bm{x}} - m \varphi_{\bm{0},\bm{\Sigma}}\left(B_n^{-1}(\bm{x} - \bm{A}_n) \right)| = 0.
	\end{align}
\end{proposition}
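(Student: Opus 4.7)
The plan is to prove this strengthening of Gnedenko's local central limit theorem by combining the classical Fourier-inversion proof with an integration-by-parts argument that converts the polynomial weight $R_n(\bm{x})$ into derivatives of the characteristic function.

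First I would reduce to the case of a $\Z^d$-valued random walk via the affine change of variables $\bm{Y}_i:=\bm{D}^{-1}(\bm{X}_i-\bm{g})$, using that $m\bm{g}\in\bm{D}\Z^d$ (a consequence of comparing Proposition~\ref{prop:part}(2) at residues $j$ and $j+m$) to write $\bm{S}_n = n\bm{g} + \bm{D}\bm{T}_n$ with $\bm{T}_n:=\sum_i\bm{Y}_i\in\Z^d$. After centering, the statement~\eqref{eq:tof} reduces---via the Jacobian $|\det\bm{D}|=m$ absorbing the factor $m$, and the change of quadratic form $\bm{\Sigma}\mapsto\bm{\Sigma}_{\bm{Y}}:=\bm{D}^{-1}\bm{\Sigma}\bm{D}^{-T}$---to showing, for each residue class $n\equiv j\pmod m$ and each multi-index $|\alpha|\le 2$,
\[
\sup_{\bm{y}}\bigl|\bm{z}_{\bm{y}}^\alpha\bigl(n^{d/2}\Prb{\bm{T}_n=\bm{y}}-\varphi_{\bm{0},\bm{\Sigma}_{\bm{Y}}}(\bm{z}_{\bm{y}})\bigr)\bigr|\to 0,
\]
where $\bm{z}_{\bm{y}}:=(\bm{y}-n\mathbb{E}[\bm{Y}])/\sqrt{n}$ and the supremum is taken over $\bm{y}$ in the appropriate lattice coset.

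Next, letting $\psi(\bm{t}):=\mathbb{E}[e^{i\langle\bm{t},\bm{Y}-\mathbb{E}[\bm{Y}]\rangle}]$, Fourier inversion on the torus and for the Gaussian density give, after rescaling $\bm{s}=\sqrt{n}\bm{t}$,
\[
n^{d/2}\Prb{\bm{T}_n=\bm{y}} - \varphi_{\bm{0},\bm{\Sigma}_{\bm{Y}}}(\bm{z}_{\bm{y}}) = \frac{1}{(2\pi)^d}\int_{[-\pi\sqrt{n},\pi\sqrt{n}]^d} e^{-i\langle\bm{s},\bm{z}_{\bm{y}}\rangle}\bigl(\psi(\bm{s}/\sqrt{n})^n - e^{-\bm{s}^T\bm{\Sigma}_{\bm{Y}}\bm{s}/2}\bigr)\,d\bm{s} + \mathrm{err}(\bm{y}),
\]
with $\mathrm{err}$ the Gaussian integral over $\|\bm{s}\|_\infty>\pi\sqrt{n}$, uniformly $O(e^{-cn})$. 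To introduce the weight $\bm{z}_{\bm{y}}^\alpha$, I would integrate by parts via $\bm{z}^\alpha e^{-i\langle\bm{s},\bm{z}\rangle}=i^{|\alpha|}\partial_{\bm{s}}^\alpha e^{-i\langle\bm{s},\bm{z}\rangle}$; torus periodicity of $\psi^n$ eliminates boundary terms from the main integral, and boundary terms from the Gaussian side at $\|\bm{s}\|_\infty=\pi\sqrt{n}$ are absorbed into a negligible $O(e^{-cn})$. The outcome is the uniform-in-$\bm{y}$ bound $\sup_{\bm{y}}|\bm{z}_{\bm{y}}^\alpha I(\bm{y})|\le (2\pi)^{-d}\|\partial_{\bm{s}}^\alpha(\psi(\bm{s}/\sqrt{n})^n - e^{-\bm{s}^T\bm{\Sigma}_{\bm{Y}}\bm{s}/2})\|_{L^1}$.

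It then suffices to show these $L^1$-norms vanish as $n\to\infty$. I would split $[-\pi\sqrt{n},\pi\sqrt{n}]^d$ into a small zone $\|\bm{s}\|\le\delta\sqrt{n}$ and a far zone $\|\bm{s}\|>\delta\sqrt{n}$. In the small zone, the Taylor expansion $\psi(\bm{u})=1-\tfrac{1}{2}\bm{u}^T\bm{\Sigma}_{\bm{Y}}\bm{u}+o(\|\bm{u}\|^2)$ (valid because $\psi\in C^2$ by finite second moment) together with the analogous expansion for $\nabla\psi$ give, via the chain rule, pointwise convergence of $\partial_{\bm{s}}^\alpha[\psi(\bm{s}/\sqrt{n})^n]$ to $\partial_{\bm{s}}^\alpha[e^{-\bm{s}^T\bm{\Sigma}_{\bm{Y}}\bm{s}/2}]$; a uniform Gaussian majorant $C\|\bm{s}\|^{|\alpha|}e^{-c\|\bm{s}\|^2}$ (coming from $|\psi(\bm{u})|\le e^{-c\|\bm{u}\|^2}$ for small $\bm{u}$, which uses the positive definiteness of $\bm{\Sigma}$) then yields $L^1$-convergence via dominated convergence. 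In the far zone, aperiodicity---sharpened to the residue class $n\equiv j\pmod m$ by Proposition~\ref{prop:part}(2)---ensures $\sup_{\delta\le\|\bm{u}\|_\infty\le\pi}|\psi(\bm{u})|=:\rho<1$, so $\psi(\bm{s}/\sqrt{n})^n$ and its derivatives of order $\le 2$ are bounded by a polynomial in $n$ times $\rho^{n-O(1)}$, whose integral over the far zone (of Lebesgue volume $O(n^{d/2})$) decays exponentially in $n$. The main obstacle is controlling, in the small zone for $|\alpha|=2$, the chain-rule cross terms of the form $n(n-1)\psi^{n-2}(\partial\psi)^2/n$ so that they match $\partial^2[e^{-\bm{s}^T\bm{\Sigma}_{\bm{Y}}\bm{s}/2}]$ using only the $C^2$-regularity of $\psi$, which is why the conclusion is $o(1)$ rather than an explicit rate.
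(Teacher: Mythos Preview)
Your sketch is correct and follows the standard Fourier--inversion route. Note, however, that the paper does not supply its own proof of this proposition: it treats the result as known, citing Gnedenko and Rva\v{c}eva for the basic multi-dimensional local limit theorem, \cite[Prop.~2]{MR0279849} for the passage from the strongly aperiodic to the merely aperiodic case, and \cite[Statement P10, p.~79]{MR0388547} (Spitzer) for the strengthening by the factor $R_n(\bm{x})$ in the strongly aperiodic setting, remarking only that the latter extends to the general lattice case ``analogously as in~\cite{MR0279849}''. What you have written is precisely that combination: the affine change $\bm{Y}=\bm{D}^{-1}(\bm{X}-\bm{g})$ effects the reduction of~\cite{MR0279849} (absorbing the factor $m=|\det\bm{D}|$ via the Jacobian in $\varphi_{\bm{0},\bm{\Sigma}}$), and the integration-by-parts device trading the weight $\bm{z}^\alpha$ for $\partial_{\bm{s}}^\alpha$ of the characteristic-function difference is exactly Spitzer's mechanism for the $R_n$ strengthening.

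Two small remarks on the write-up. First, when you invoke ``torus periodicity of $\psi^n$'' to kill boundary terms, bear in mind that the \emph{centered} characteristic function $\psi$ is not itself $2\pi$-periodic; what is periodic (with period $2\pi\sqrt{n}$ in $\bm{s}$) is the full product $e^{-i\langle\bm{s},\bm{z}_{\bm{y}}\rangle}\psi(\bm{s}/\sqrt{n})^n = e^{-i\langle\bm{s}/\sqrt{n},\bm{y}\rangle}\phi_{\bm{Y}}(\bm{s}/\sqrt{n})^n$, using $\bm{y}\in\Z^d$, and this is what makes the boundary contributions cancel. Second, the far-zone bound $\sup_{\delta\le\|\bm{u}\|_\infty\le\pi}|\psi(\bm{u})|<1$ follows directly from the fact that, after your change of variables, the minimal lattice containing $\mathrm{supp}(\bm{Y})$ is all of $\Z^d$; you do not need to invoke the residue class $n\equiv j\pmod m$ at that point (Proposition~\ref{prop:part}(2) is used only to identify which coset $j\bm{g}+\bm{D}\Z^d$ carries the mass of $\bm{S}_n$). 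With these cosmetic adjustments the argument is complete.
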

	Gnedenko's one-dimensional local limit theorem of~\cite{MR0026275} has first been generalised in~\cite{rva} to the lattice case of strongly aperiodic multi-dimensional random walk. A further generalisation can be found in~\cite{MR0279849}, Proposition 2, so that strong aperiodicity of $\bm{X}$ is not assumed, but only that it is aperiodic. The version stated of the local limit theorem here in Proposition~\ref{pro:llt} is strengthened by the factor $R_n(\bm{x}) $ in Equation~\eqref{eq:tof}. When $\bm{x}$ deviates sufficiently from $\Ex{\bm{S}_n}$, this is indeed stronger than the original statement. For the strongly aperiodic case such a strengthening was obtained in~\cite{MR0388547}, Statement P10, page 79 via a modification of the proof. The version here in Proposition~\ref{pro:llt} may be derived analogously as in~\cite{MR0279849} from the strongly aperiodic setting in~\cite{MR0388547}.
\end{appendix}
%%%%%%%%%%%%%%%%%%%%%%%%%%%%%%%%%%%%%%%%%%%%%%
%% Multiple Appendixes:                     %%
%%%%%%%%%%%%%%%%%%%%%%%%%%%%%%%%%%%%%%%%%%%%%%
%\begin{appendix}
%\section{???}
%
%\section{???}
%
%\end{appendix}

%%%%%%%%%%%%%%%%%%%%%%%%%%%%%%%%%%%%%%%%%%%%%%
%% Support information, if any,             %%
%% should be provided in the                %%
%% Acknowledgements section.                %%
%%%%%%%%%%%%%%%%%%%%%%%%%%%%%%%%%%%%%%%%%%%%%%
\begin{acks}[Acknowledgments]
We warmly thank the referees for their helpful comments and thorough reading of the manuscript.
\end{acks}
%%%%%%%%%%%%%%%%%%%%%%%%%%%%%%%%%%%%%%%%%%%%%%
%% Funding information, if any,             %%
%% should be provided in the                %%
%% funding section.                         %%
%%%%%%%%%%%%%%%%%%%%%%%%%%%%%%%%%%%%%%%%%%%%%%
\begin{funding}
This research was funded in part by the Austrian Science Fund (FWF) 10.55776/F1002 and 10.55776/PAT6732623 and MSCA-RISE-2020 project RandNET, Contract 101007705. LAB acknowledges support of NSERC and the Canada Research Chairs program. The computational results have been achieved using the Austrian Scientific Computing (ASC) infrastructure. For open access purposes, the authors have applied a CC BY public copyright license to any author-accepted manuscript version arising from this submission.
\end{funding}

%%%%%%%%%%%%%%%%%%%%%%%%%%%%%%%%%%%%%%%%%%%%%%
%% Supplementary Material, including data   %%
%% sets and code, should be provided in     %%
%% {supplement} environment with title      %%
%% and short description. It cannot be      %%
%% available exclusively as external link.  %%
%% All Supplementary Material must be       %%
%% available to the reader on Project       %%
%% Euclid with the published article.       %%
%%%%%%%%%%%%%%%%%%%%%%%%%%%%%%%%%%%%%%%%%%%%%%
%\begin{supplement}
%\stitle{???}
%\sdescription{???.}
%\end{supplement}

%%%%%%%%%%%%%%%%%%%%%%%%%%%%%%%%%%%%%%%%%%%%%%%%%%%%%%%%%%%%%
%%                  The Bibliography                       %%
%%                                                         %%
%%  imsart-???.bst  will be used to                        %%
%%  create a .BBL file for submission.                     %%
%%                                                         %%
%%  Note that the displayed Bibliography will not          %%
%%  necessarily be rendered by Latex exactly as specified  %%
%%  in the online Instructions for Authors.                %%
%%                                                         %%
%%  MR numbers will be added by VTeX.                      %%
%%                                                         %%
%%  Use \cite{...} to cite references in text.             %%
%%                                                         %%
%%%%%%%%%%%%%%%%%%%%%%%%%%%%%%%%%%%%%%%%%%%%%%%%%%%%%%%%%%%%%

%% if your bibliography is in bibtex format, uncomment commands:
\bibliographystyle{imsart-number} % Style BST file (imsart-number.bst or imsart-nameyear.bst)
\bibliography{lib.bib}       % Bibliography file (usually '*.bib')

%% or include bibliography directly:
 %\begin{thebibliography}{}
% \bibitem{lib}
% \end{thebibliography}

\end{document}